\newtheorem{thm}{Theorem}
\newtheorem{mainthm}{Theorem}
\newtheorem{prop}{Proposition}
\newtheorem{lemma}{Lemma}
\newtheorem{cor}{Corollary}
\theoremstyle{remark}
\newtheorem*{remark}{Remark}
\newtheorem{exmp}{Example}
\theoremstyle{definition}
\newcommand{\lp}{\left(}
\newcommand{\rp}{\right)}
\newcommand{\lb}{\left[}
\newcommand{\rb}{\right]}
\newcommand{\al}{\alpha}
\newcommand{\be}{\beta}
\newcommand{\ga}{\gamma}
\newcommand{\de}{\delta}
\title[A Non-Autonomous Model for Parabolic Implosion]{A Non-Autonomous Model for Parabolic Implosion}
\author{Katelynn Huneycutt}
\address{Department of Mathematics, The Ohio State University, Columbus, OH 43210, USA}
\email{huneycutt.13@osu.edu}
\author{Samantha Sandberg-Clark}
\address{Department of Mathematics, The Ohio State University, Columbus, OH 43210, USA}
\email{sandberg-clark.1@osu.edu}
\author{Liz Vivas}
\address{Department of Mathematics, The Ohio State University, Columbus, OH 43210, USA}
\email{vivas.3@osu.edu}
\begin{document}

\begin{abstract}
Orthogonal polynomials appear naturally in the study of compositions of M\"obius transformations. In this paper, we consider several classes of orthogonal polynomials associated to non-autonomous perturbations of a parabolic M\"obius  map. Our results can be viewed as instances of non-autonomous parabolic implosion, including a random perturbative regime in which convergence holds almost surely.
\end{abstract}

\maketitle

\section{Introduction}

The study of parabolic implosion, starting with the work of Lavaurs \cite{Lavaurs1989}, has had striking consequences on the study of dynamics in one complex variable.  In recent years, this study has been extended quite successfully to several complex variables. 

Parabolic implosion in several variables has been developed in multiple settings. Starting with a map with a semi-attracting fixed point, Bedford, Smillie, and Ueda \cite{Bed16} described a parabolic implosion phenomenon, later generalized by Dujardin and Lyubich \cite{DuLy15}. Similarly Bianchi studied parabolic implosion for a class of maps on dimension $2$ tangent to the identity \cite{Bia19}.

Another setup in which parabolic implosion has been generalized, is to the case of skew-product maps that are tangent to the identity. Recent work by Astorg, Buff, Dujardin, Peters, Raissy \cite{ABD+16} exploit a parabolic implosion phenomenon in this setup to prove the existence of wandering Fatou components. Similar results have been obtained by Astorg and Boc-Thaler \cite{AstBoc22} as well as by Astorg and Bianchi \cite{AstBia24}.

Although our core estimates are one-dimensional, they are designed to be applied to skew-product dynamics in $\mathbb{C}^2$, thus describing new cases of parabolic implosion. We follow the approach of Vivas and study the map $F(z)=\frac{z}{1-z}$ and its perturbations.  As in \cite{Viv20}, we transform the problem to a multiplication of matrices and use orthogonal polynomial theory. We should point out that in our new setup we require a different class of orthogonal polynomials than the ones used in \cite{Viv20}. See section \ref{sec:orthogonalpolynomials} for more details.

Let us start by describing our results in more detail. The classical theorem of Lavaurs involves the perturbation of any given map $f$ as below.
 \begin{thm}[Lavaurs]
    Let $f$ be defined in a neighborhood $V$ of the origin and be of the form $f(z)=z+z^{2}+O(z^{3})$. Consider the perturbation of f as follows: given $\epsilon>0 $ let $f_\epsilon(z) \coloneqq f(z)+\epsilon^2$. If we take a sequence of numbers $N_\epsilon \to \infty$ and $\epsilon \to 0$, such that $N_\epsilon-\frac{\pi}{\epsilon}\rightarrow 0$. Then we obtain the following:
    \begin{align*}
        (f_\epsilon)^{N_\epsilon} \rightarrow \mathcal{L}_f
    \end{align*}
    uniformly on compacts on the basin of attraction of $f$. Here $\mathcal{L}_f$ is the Lavaurs map of $f$.
\end{thm}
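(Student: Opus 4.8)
The plan is to follow the Douady--Lavaurs--Shishikura strategy through \emph{perturbed Fatou coordinates}; I will only sketch the architecture. First I would record the unperturbed picture. Since $f(z)=z+z^{2}+O(z^{3})$, the coordinate change $w=-1/z$ conjugates $f$ on a punctured neighbourhood of $0$ to a map of the form $w\mapsto w+1+o(1)$ as $|w|\to\infty$; on an attracting petal $P^{+}$ this produces an attracting Fatou coordinate $\phi^{+}$ with $\phi^{+}\circ f=\phi^{+}+1$, which extends to the whole basin of $0$ by the functional equation, and on a repelling petal $P^{-}$ a repelling Fatou coordinate $\phi^{-}$, i.e. an injective $\phi^{-}\colon P^{-}\to\mathbb{C}$ conjugating the inverse branch of $f$ on $P^{-}$ to $w\mapsto w-1$. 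Both coordinates are determined up to an additive constant, fixed by the chosen normalisation, and the Lavaurs map is $\mathcal{L}_{f}=(\phi^{-})^{-1}\circ\phi^{+}$. So the target of the theorem is intrinsic to $f$, and the proof amounts to a control of iteration counts.

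Second, I would analyse the perturbation. For $\epsilon\neq 0$ small the fixed point of $f_{\epsilon}=f+\epsilon^{2}$ splits into two fixed points near $\pm i\epsilon$, and in the $w$-coordinate $f_{\epsilon}$ opens a ``channel'' between them on which the dynamics is a small perturbation of translation by $1$. On an incoming domain $V_{\epsilon}^{+}$ and an outgoing domain $V_{\epsilon}^{-}$, both contained in a fixed neighbourhood $U$ of $0$ independent of $\epsilon$, I would construct holomorphic injections $\phi_{\epsilon}^{+}$, $\phi_{\epsilon}^{-}$ conjugating $f_{\epsilon}$ to $w\mapsto w+1$, by the standard normal-families construction carried out uniformly in $\epsilon$. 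Two quantitative facts are needed. The first is the \emph{transit time}: because $f_{\epsilon}(z)-z=z^{2}+\epsilon^{2}+O(z^{3})$, the number of iterates of $f_{\epsilon}$ needed to cross $U$ is governed by $\int_{-\delta}^{\delta}\frac{dz}{z^{2}+\epsilon^{2}}=\frac{\pi}{\epsilon}+O(1)$; precisely, one shows an identity $\phi_{\epsilon}^{-}\circ(\phi_{\epsilon}^{+})^{-1}(w)=w-\alpha(\epsilon)$ with $\alpha(\epsilon)=\frac{\pi}{\epsilon}+o(1)$ once the coordinates are normalised appropriately. The second is \emph{stability}: $\phi_{\epsilon}^{+}\to\phi^{+}$ and $\phi_{\epsilon}^{-}\to\phi^{-}$ locally uniformly as $\epsilon\to 0$, which follows from the uniformity of the construction together with $f_{\epsilon}\to f$.

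Third, I would count iterates. Fix a compact $K$ in the basin of $0$ for $f$; there is a fixed integer $M$ with $f^{M}(K)\subset P^{+}$, so $f_{\epsilon}^{M}(K)\subset V_{\epsilon}^{+}$ for $\epsilon$ small, and a fixed integer $M'$ chosen so that $(\phi^{-})^{-1}$ is applied on the outgoing side. Writing $N_{\epsilon}=M+L_{\epsilon}+M'$, the transit part satisfies an identity of the form $f_{\epsilon}^{L_{\epsilon}}=(\phi_{\epsilon}^{-})^{-1}\circ T_{L_{\epsilon}-\alpha(\epsilon)}\circ\phi_{\epsilon}^{+}$ on the relevant subdomain, where $T_{t}$ denotes translation by $t$. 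Now $\phi_{\epsilon}^{+}\circ f_{\epsilon}^{M}\to\phi^{+}\circ f^{M}=\phi^{+}+M$ and $f_{\epsilon}^{M'}\circ(\phi_{\epsilon}^{-})^{-1}\to f^{M'}\circ(\phi^{-})^{-1}$ uniformly on the relevant compacts, while $L_{\epsilon}=N_{\epsilon}-M-M'$ and $\alpha(\epsilon)=\frac{\pi}{\epsilon}+o(1)$ together with the hypothesis $N_{\epsilon}-\frac{\pi}{\epsilon}\to 0$ give $L_{\epsilon}-\alpha(\epsilon)\to -(M+M')$. The constants $M$ and $M'$ therefore cancel, and one obtains $(f_{\epsilon})^{N_{\epsilon}}\to(\phi^{-})^{-1}\circ\phi^{+}=\mathcal{L}_{f}$ uniformly on $K$.

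The main obstacle is the second step: constructing the perturbed Fatou coordinates with estimates uniform as $\epsilon\to 0$, and pinning the transit time down to the precision $\alpha(\epsilon)=\frac{\pi}{\epsilon}+o(1)$ — it is the bounded correction here that fixes the phase of $\mathcal{L}_{f}$ and makes ``$N_{\epsilon}-\frac{\pi}{\epsilon}\to 0$'' exactly the right hypothesis, a shifted hypothesis producing $\mathcal{L}_{f}$ post-composed with a translation in the repelling coordinate. Making this rigorous requires showing that an orbit started in $f_{\epsilon}^{M}(K)$ transits the channel in $\sim\frac{\pi}{\epsilon}$ steps without escaping $U$ prematurely or stalling near a fixed point, and that it stays throughout in the region where $\phi_{\epsilon}^{\pm}$ and their convergence to $\phi^{\pm}$ are valid; the hyperbolicity estimates inside the channel are what carry this.
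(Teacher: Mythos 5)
The paper does not prove this statement; it is quoted as the classical theorem of Lavaurs \cite{Lavaurs1989} and used only as motivating background, so there is no in-paper proof to compare against. Your sketch is a correct outline of the standard Douady--Lavaurs--Shishikura argument via perturbed Fatou coordinates, and the bookkeeping is sound: the decomposition $N_{\epsilon}=M+L_{\epsilon}+M'$, the identity $f_{\epsilon}^{L_{\epsilon}}=(\phi_{\epsilon}^{-})^{-1}\circ T_{L_{\epsilon}-\alpha(\epsilon)}\circ\phi_{\epsilon}^{+}$, and the cancellation of $M$, $M'$ via $\phi^{\pm}\circ f=\phi^{\pm}+1$ all check out, and you correctly flag that the $O(1)$ term in $\alpha(\epsilon)$ (absorbed into the normalization of $\phi^{\pm}$) is what pins the phase so that $N_{\epsilon}-\pi/\epsilon\to 0$ yields $\mathcal{L}_{f}=(\phi^{-})^{-1}\circ\phi^{+}$ rather than a translated Lavaurs map. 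You are also right to single out the uniform construction of $\phi_{\epsilon}^{\pm}$ and the sharp asymptotics of the transit time as the genuinely hard analytic content; a full proof would have to supply the estimates showing that orbits launched from $f_{\epsilon}^{M}(K)$ remain in the channel where the perturbed coordinates are defined and converge, but as a blind sketch of the architecture this is accurate and matches what Lavaurs actually does.
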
 

The Lavaurs map of $f$ is a transformation describing the discrepancy between the incoming and outgoing linearizing coordinates for $f$ on their respective incoming and outgoing basins.  We will refer to this phenomenon as \textit{autonomous parabolic implosion} for $f$.

\subsection{Non-autonomous perturbations} The natural question is what happens if the perturbation varies at
each step. Given a sequence $\{\epsilon_{k}\}_{k=1}^{N}$, define $f_{k}(z)\coloneqq f(z)+\epsilon_{k}^2$. We are interested in under which conditions on $\{\epsilon_{k}\}_{k=1}^{N}$ does 
\begin{align*}
     f_N\circ f_{N-1}\circ \cdots\circ f_2\circ f_1 \rightarrow  \mathcal{L}_{f}.
\end{align*}

This ``non-autonomous'' parabolic implosion shows up naturally when studying parabolic implosion in several variables. In effect, Astorg et al. \cite{AstBia24,ABD+16,AstBoc22} have proven results of this type.

In \cite{Viv20}, Vivas focuses on the case of the initial map to be $f(z)=z/(1-z)$ since in this case each perturbation is also a M\"obius transformation, and the Lavaurs map is the identity. 

\subsection{Our results} 

We extend the results of \cite{Viv20} by allowing both multiplicative and additive perturbations of the model map.  
Specifically, given sequences $\{\rho_{k},\epsilon_{k}\}_{k=1}^{N}$, we consider
\[
   f_{k}(z):=\rho_k\frac{z}{1-z}+\epsilon_{k}^2,
   \qquad \rho_k = e^{2\pi i \theta_k}, \ \theta_k\in\mathbb{C},
\]
and ask under what conditions the composition
\[
   f_N\circ f_{N-1}\circ \cdots\circ f_1
\]
converges to the identity.  

Our main theorems show that this convergence persists under several regimes:
\begin{itemize}
   \item \textit{Purely multiplicative perturbations ($\epsilon_k=0$).} Convergence holds provided the sequence $\{\theta_k\}$ approximates $1/N$ closely enough, with controlled error terms (Theorem~\ref{thm:A}).
   \item \textit{Combined multiplicative and additive perturbations.} More general families of convergent sequences arise when $\{\theta_k\}$ and $\{\epsilon_k\}$ are simultaneously perturbed (Theorem~\ref{thm:B}).
   \item \textit{Random additive perturbations ($\rho_k=1$).} If the additive errors fluctuate randomly with mean $\frac{\pi}{N}$, then convergence still holds almost surely (Theorem~\ref{thm:random}). 
\end{itemize}

These results highlight the delicate balance in the non-autonomous setting.  
Errors of order $1/N^2$ already require strong correlation conditions, but introducing randomness allows one to relax these restrictions.  
Note also that although multiplicative and additive perturbations may look analogous, they are not directly reducible to each other: in the additive case each perturbation can be conjugated to a rotation, but the conjugacy varies with $k$, so the multiplicative case cannot be deduced from the additive one.  

In fact, we will see later that the rotation case ($\rho_k\neq 1$) cannot be deduced directly from the additive perturbative case (i.e., $\rho_k=1$).

Let us state our theorems now:

\begin{mainthm}[Multiplicative perturbations]\label{thm:A}
Let
\[
   f_k(z):= \frac{\rho_k z}{1-z}, 
   \qquad \rho_k = e^{2\pi i\theta_k},\qquad \theta_k\in\mathbb{C}.
\]
Suppose $\{\theta_k\}$ satisfies any of the following:
\begin{enumerate}
   \item $\displaystyle \theta_k = \frac{1}{N} + O\!\left(\tfrac{1}{N^3}\right)$;
   \item $\displaystyle \theta_k = \frac{1}{N} + \frac{c_k}{N^2}$ with $c_k$ uniformly bounded and $\; c_k+c_{k+1} = O\!\left(\tfrac{1}{N}\right)$ for odd $k$;
   \item $\displaystyle \theta_k = \frac{1}{N} + \frac{Ce^{2\pi i k/N}}{N^2}$ where $C$ is any constant in $\mathbb{C}$.
\end{enumerate}
Then 
\[
   f_N\circ f_{N-1}\circ \cdots\circ f_1 \longrightarrow \mathrm{Id}
\]
as $N\to\infty$, uniformly on compact subsets of $\mathbb{C}$.
\end{mainthm}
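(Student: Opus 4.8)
The plan is to follow the matrix reformulation of \cite{Viv20}. Represent the M\"obius map $f_k(z)=\dfrac{\rho_k z}{1-z}$ by $M_k=\begin{pmatrix}\rho_k & 0\\ -1 & 1\end{pmatrix}$, so that $g_N:=f_N\circ f_{N-1}\circ\cdots\circ f_1$ corresponds to $M_N M_{N-1}\cdots M_1$. An immediate induction gives
\[
  M_N M_{N-1}\cdots M_1=\begin{pmatrix}a_N & 0\\ c_N & 1\end{pmatrix},\qquad a_N=\prod_{k=1}^N\rho_k,\qquad c_N=-\sum_{j=0}^{N-1}\prod_{k=1}^{j}\rho_k,
\]
that is $g_N(z)=\dfrac{a_N z}{1+c_N z}$; the sequence $(c_n)$ also satisfies a three-term recurrence $c_{n+1}=(1+\rho_n)c_n-\rho_n c_{n-1}$, $c_0=0$, $c_1=-1$, which is the bridge to orthogonal-polynomial theory. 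Since the only pole of $g_N$ sits at $-1/c_N$, once $a_N\to1$ and $c_N\to0$ we get on any compact $K$, with $R=\sup_{z\in K}|z|$, the bound $|g_N(z)-z|\le 2R|a_N-1|+2R^2|c_N|$ for all large $N$, hence $g_N\to\mathrm{Id}$ uniformly on $K$. So everything reduces to the two scalar claims $a_N\to1$ and $c_N\to0$.

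Write $\rho_k=\omega\,e^{2\pi i\eta_k}$ with $\omega:=e^{2\pi i/N}$ and $\eta_k:=\theta_k-\tfrac1N$; then $\omega^N=1$ and, in all three regimes, $\sum_{k=1}^N|\eta_k|=O(1/N)$. Now $a_N=e^{2\pi i\sum_{k=1}^N\eta_k}$, and $\sum_{k=1}^N\eta_k\to0$ is immediate in (1) and (2) (it is $O(1/N^2)$, resp.\ $O(1/N)$ from $|c_k|=O(1)$), while in (3) it equals $\tfrac{C}{N^2}\sum_{k=1}^N\omega^k=0$. For $c_N$, set $P_j:=\prod_{k=1}^{j}e^{2\pi i\eta_k}$, so $c_N=-\sum_{j=0}^{N-1}\omega^j P_j$, and apply summation by parts against the partial sums $W_j:=\sum_{\ell=0}^{j}\omega^\ell=\dfrac{\omega^{j+1}-1}{\omega-1}$. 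The boundary term vanishes because $W_{N-1}=0$ --- the discrete trace of the fact that the unperturbed $N$-th iterate is the identity --- leaving $c_N=\sum_{j=0}^{N-2}W_j\,P_j\bigl(e^{2\pi i\eta_{j+1}}-1\bigr)$. Using $|W_j|=O(N)$, $P_j=1+O(1/N)$, $e^{2\pi i\eta_{j+1}}-1=2\pi i\,\eta_{j+1}+O(\eta_{j+1}^2)$ and $\tfrac{2\pi i}{\omega-1}=N\bigl(1+O(1/N)\bigr)$, this collapses to
\[
  c_N=N\bigl(1+O(1/N)\bigr)\sum_{k=1}^{N-1}(\omega^{k}-1)\,\eta_k+O(1/N),
\]
so it remains only to show $N\sum_{k=1}^{N-1}(\omega^{k}-1)\,\eta_k\to0$ in each case.

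In case (1) this quantity is $O(N^2\cdot N^{-3})=O(1/N)$. In case (3), with $\eta_k=C\omega^k/N^2$, it equals $\tfrac{C}{N}\sum_{k=1}^{N-1}(\omega^{2k}-\omega^k)=0$ since $\sum_{k=1}^{N-1}\omega^{2k}=\sum_{k=1}^{N-1}\omega^k=-1$ (valid once $\omega^2\neq1$); in fact the main term here vanishes outright, so $c_N=O(1/N)$. Case (2) is the crux, and it is where $c_k+c_{k+1}=O(1/N)$ for odd $k$ is used: with $\eta_k=c_k/N^2$ we must show $\tfrac1N\sum_{k=1}^{N-1}(\omega^k-1)c_k\to0$, and pairing consecutive indices with $c_{k+1}=-c_k+O(1/N)$ gives
\[
  (\omega^k-1)c_k+(\omega^{k+1}-1)c_{k+1}=c_k\,\omega^k(1-\omega)+O(1/N)=O(1/N)
\]
because $|1-\omega|=O(1/N)$; summing the $\sim N/2$ pairs (the lone leftover term, when $N$ is even, is $O(1)$) yields $\sum_{k=1}^{N-1}(\omega^k-1)c_k=O(1)$, hence $\tfrac1N\sum_{k=1}^{N-1}(\omega^k-1)c_k\to0$. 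The main obstacle is exactly this last step: the naive bound on $\sum_{k}(\omega^k-1)c_k$ is only $O(N)$, and one must arrange that the two sources of smallness --- the near-cancellation $c_k+c_{k+1}=O(1/N)$ and the near-equality $\omega^{k+1}=\omega^k+O(1/N)$ --- combine so that each consecutive pair is genuinely $O(1/N)$ while the accumulated error stays $O(1)$; all the remaining estimates are routine. Altogether $a_N\to1$ and $c_N\to0$, and therefore $g_N\to\mathrm{Id}$ uniformly on compact subsets of $\mathbb{C}$.
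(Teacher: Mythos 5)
Your proof is correct. You take the same first step as the paper (reduce the composition to the coefficients $a_N=A_N$ and $c_N=C_N=-q_N$ of the resulting M\"obius map, exactly as in Corollary~\ref{cor:onlyrho}), but from there your route is genuinely different and more elementary: because $\epsilon_k=0$, the three-term recurrence telescopes, $q_{k+1}-q_k=\rho_k(q_k-q_{k-1})$, so you get the closed form $q_N=\sum_{j=0}^{N-1}\prod_{k\le j}\rho_k$ and can run a single Abel summation against the geometric partial sums $W_j$ (which are precisely the unperturbed polynomials $T_{j+1}$ of the paper). This bypasses the paper's perturbation machinery entirely --- Lemma~\ref{lem:relationship}, the a priori bounds $|q_k|\le 2N$, $|q_k-q_{k-1}|\le\tilde C$ of Lemma~\ref{lem:q_kdifferencebound}, and Propositions~\ref{prop:cubic} and~\ref{pro:summationconstant} --- while arriving at the same sufficient condition: your requirement $N\sum_k(\omega^k-1)\eta_k\to0$ is, up to $O(1/N)$ errors, the paper's condition $\bigl|\sum_k a_kT_k\bigr|\le C/N$ of \eqref{eq: ak/N^2 sum requirement}, since $\omega^k-1=(\omega-1)T_k$ and $a_k=\rho_k-\rho\approx 2\pi i\omega\eta_k$; your verifications in cases (2) and (3) (pairing consecutive terms using $c_k+c_{k+1}=O(1/N)$ together with $|1-\omega|=O(1/N)$, and the vanishing of the geometric sums $\sum\omega^k=\sum\omega^{2k}=-1$) mirror Corollaries~\ref{consecutive} and~\ref{expsums}. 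What each approach buys: yours is shorter and self-contained for Theorem~\ref{thm:A}, and it makes transparent where the cancellation comes from (the boundary term $W_{N-1}=0$); the paper's heavier setup is what allows the additive perturbations $\epsilon_k\ne 0$ of Theorem~\ref{thm:B}, where the product formula for $q_k$ is no longer available, so your method does not extend there. Two trivial caveats, both already implicit in your write-up: case (3) needs $\omega^2\ne 1$, i.e.\ $N\ge 3$, and the final uniform bound on a compact $K$ requires $N$ large enough that $|c_N|\le 1/(2R)$.
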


\begin{mainthm}[Combined multiplicative and additive perturbations]\label{thm:B}
Let
\[
   f_k(z) := \frac{\rho_k z}{1-z} + \epsilon_k^2, 
   \qquad \rho_k = e^{2\pi i \theta_k},\ \ \theta_k\in\mathbb{C}.
\]
Suppose \(\{\theta_k\}\) and \(\{\epsilon_k\}\) satisfy any of the following:
\begin{enumerate}
   \item \(\displaystyle \theta_k = \frac{1}{N} + O\!\left(\tfrac{1}{N^3}\right)\) and \(\displaystyle \epsilon_k = O\!\left(\tfrac{1}{N^2}\right)\);
   \item \(\displaystyle \theta_k = \frac{1}{N} + \frac{c_k}{N^2}\) with $c_k$ uniformly bounded and \(\, c_k+c_{k+1} =O\!\left(\tfrac{1}{N}\right)\) for odd \(k\), and \(\displaystyle \epsilon_k = O\!\left(\tfrac{1}{N^2}\right)\);
  \item $\displaystyle \theta_k = \frac{1}{N} + \frac{Ce^{2\pi i k/N}}{N^2}$  and \(\displaystyle \epsilon_k = O\!\left(\tfrac{1}{N^2}\right)\); where $C$ is any constant in $\mathbb{C}$.
   \item \(\displaystyle \theta_k = 0\) and \(\displaystyle \epsilon_k = \frac{\pi}{N} + O\!\left(\tfrac{1}{N^3}\right)\);
   \item \(\displaystyle \theta_k = 0\) and \(\displaystyle \epsilon_k = \frac{\pi}{N} + \frac{c_k}{N^2}\) with \(\,c_k + c_{N-k} = O\!\left(\tfrac{1}{N}\right)\).
\end{enumerate}
Then
\[
   f_N\circ f_{N-1}\circ \cdots\circ f_1 \longrightarrow \mathrm{Id}
\]
as \(N\to\infty\), uniformly on compact subsets of $\mathbb{C}$.
\end{mainthm}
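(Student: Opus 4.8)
The plan is to convert the composition into a product of $2\times 2$ matrices and then to treat two genuinely different regimes: the multiplicative-dominated cases (1)--(3), which I would deduce from Theorem~\ref{thm:A} by a perturbation argument, and the purely additive cases (4)--(5), which call for a direct analysis of a three-term recurrence at the edge of its spectrum. To set up, write $f_k\leftrightarrow M_k:=\left(\begin{smallmatrix}\rho_k-\epsilon_k^2 & \epsilon_k^2\\ -1 & 1\end{smallmatrix}\right)$, so that $\det M_k=\rho_k$ and $f_N\circ\cdots\circ f_1$ corresponds to $P_N:=M_N\cdots M_1$. Writing $P_n=\left(\begin{smallmatrix}A_n & B_n\\ C_n & D_n\end{smallmatrix}\right)$, the relation $P_n=M_nP_{n-1}$ gives $A_{n-1}=C_{n-1}-C_n$, and eliminating $A$ yields
\[
   C_{n+1}=(1+\rho_n-\epsilon_n^2)\,C_n-\rho_n\,C_{n-1},\qquad C_0=0,\ C_1=-1,
\]
with $D_n$ obeying the same recurrence but with data $D_0=D_1=1$, and $A_n=C_n-C_{n+1}$, $B_n=D_n-D_{n+1}$. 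Each of the hypotheses forces $\det P_N=\prod_k\rho_k\to 1$, so it suffices to prove $P_N\to I$ (in the cases with $\rho_k\neq 1$) or $P_N\to -I$ (in the additive cases): then $z\mapsto(A_Nz+B_N)/(C_Nz+D_N)\to z$ uniformly on compact subsets of $\mathbb{C}$, the pole escaping to $\infty$ since $C_N\to 0$.

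For cases (1)--(3), $\{\theta_k\}$ satisfies exactly the hypotheses of Theorem~\ref{thm:A} while $\epsilon_k=O(1/N^2)$. I would split $M_k=L_k+\epsilon_k^2B$ with $L_k=\left(\begin{smallmatrix}\rho_k & 0\\ -1 & 1\end{smallmatrix}\right)$ the purely multiplicative matrix and $B=\left(\begin{smallmatrix}-1 & 1\\ 0 & 0\end{smallmatrix}\right)$, then expand $P_N=\prod_{k=N}^{1}(L_k+\epsilon_k^2B)$ over subsets of $\{1,\dots,N\}$. The empty subset contributes the Theorem~\ref{thm:A} product $L_N\cdots L_1\to I$. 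Every other term is a string of consecutive blocks $L_j\cdots L_i$ separated by $r\ge 1$ factors $\epsilon_k^2B$; since the $L_k$ are lower triangular one checks $\|L_j\cdots L_i\|=O(N)$ uniformly (the off-diagonal entry is a sum of at most $N$ partial products of $\rho$'s of bounded modulus, because the imaginary parts of the corresponding partial sums of $\theta$'s stay bounded), while $\epsilon_k^2=O(1/N^4)$. Summing the resulting geometric series bounds the total correction by $O(1/N)$, so $P_N\to I$.

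For cases (4)--(5), $\rho_k=1$ and the recurrence becomes $C_{n+1}=(2-\epsilon_n^2)C_n-C_{n-1}$, a discrete Schr\"odinger recurrence with potential $\epsilon_n^2$ at the edge of the free spectrum. I would first settle the autonomous model $\epsilon_n\equiv\pi/N$: here $M_k$ is a fixed elliptic matrix with eigenvalues $e^{\pm i\phi_N}$, $2\cos\phi_N=2-\pi^2/N^2$, so $\phi_N=\pi/N+O(1/N^3)$ and $N\phi_N=\pi+O(1/N^2)$; diagonalizing, the eigenbasis has condition number $O(N)$ but the eigenvalue error is only $O(1/N^2)$, giving $M_k^N=-I+O(1/N)\to -I$. (Equivalently, the autonomous recurrence has solutions $\sin(n\phi_N)$ and $\cos(n\phi_N)$, from which the four entries of $P_N$ are read off.) In case (4), where $\epsilon_n=\pi/N+O(1/N^3)$, the potential is perturbed by $q_n:=\epsilon_n^2-\pi^2/N^2=O(1/N^4)$, and a discrete variation-of-parameters estimate using the Green's function $\mathcal{G}(N,n)=\sin((N-n)\phi_N)/\sin\phi_N$ of the autonomous recurrence shows every entry of $P_N$ moves by $O(1/N)$; hence $P_N\to -I$.

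Case (5) is where the real work lies, and it is the main obstacle. There $\epsilon_n=\pi/N+c_n/N^2$, so $q_n=2\pi c_n/N^3+O(1/N^4)$ is only $O(1/N^3)$ per step; because the model map is parabolic ($\|M_*^{\,j}\|$ grows linearly and $\mathcal{G}$ has size $O(N)$), the naive Duhamel bound produces a correction of size $O(1)$, not $o(1)$, so one cannot afford to estimate in operator norm. I would instead extract the exact first-order term
\[
   \delta C_N=\frac{1}{\sin^2\phi_N}\sum_{n=1}^{N-1}q_n\,\sin(n\phi_N)\,\sin\!\big((N-n)\phi_N\big)+(\text{higher order}),
\]
and observe that the kernel $K(n)=\sin(n\phi_N)\sin((N-n)\phi_N)$ is symmetric under $n\leftrightarrow N-n$. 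Hence $\sum_n c_nK(n)=\tfrac12\sum_n(c_n+c_{N-n})K(n)$, which is $O(1)$ by the hypothesis $c_n+c_{N-n}=O(1/N)$ and $|K(n)|\le 1$; combined with $1/\sin^2\phi_N=O(N^2)$ and $q_n=O(1/N^3)$ this yields $\delta C_N=O(1/N)\to 0$. The higher-order Duhamel terms carry extra factors of $q_n$ and are $O(1/N)$ by crude estimates, so $C_N\to 0$; the rest follows easily, since $B_N=-\epsilon_1^2C_N\to 0$, $D_N\to -1$ (no symmetry is needed here because $\cos(n\phi_N)$ is $O(1)$ rather than $O(N)$), and then $\det P_N=1$ forces $A_N\to -1$, so $P_N\to -I$. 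In short, the only genuinely delicate point is the parabolic degeneracy of case (5): one must identify the first-order correction precisely and exploit the reversal symmetry of $\sin(n\phi_N)\sin((N-n)\phi_N)$, which is exactly what the condition $c_n+c_{N-n}=O(1/N)$ is designed to activate; the analogous delicacy in cases (1)--(3) is already packaged inside Theorem~\ref{thm:A}.
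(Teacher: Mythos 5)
Your overall strategy is sound but genuinely different from the paper's. For cases (1)--(3) the paper does not expand the matrix product around the purely multiplicative one; instead it absorbs the additive term directly into the recurrence coefficients, setting $b_k=\rho_k-\rho$ and $a_k=b_k-\epsilon_k^2$, and observes that $\epsilon_k^2=O(1/N^4)$ fits inside the $O(1/N^4)$ error allowance of Propositions~\ref{prop:cubic} and~\ref{pro:summationconstant}; the entries $B_N,D_N$ are then handled by a separate comparison $r_k=q_k-(\rho+b_1)s_{k-1}$ with a shifted copy of the $q$-recursion (Proposition~\ref{prop:rklimits}). Your subset expansion $P_N=\prod(L_k+\epsilon_k^2 B)$ with the uniform bound $\|L_j\cdots L_i\|=O(N)$ is a legitimate alternative and has the advantage of delivering all four matrix entries at once from Theorem~\ref{thm:A}; the geometric-series bound $\sum_{r\ge1}N^r(CN)^{r+1}(C'/N^4)^r=O(1/N)$ does close. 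For cases (4)--(5) the paper simply cites \cite{Viv20}, so your Green's-function analysis (first-order Duhamel term, symmetrization of the kernel $\sin(n\phi_N)\sin((N-n)\phi_N)$ to activate $c_n+c_{N-n}=O(1/N)$, crude bounds on higher-order terms) is doing real work that the paper outsources; it is essentially a reconstruction of the mechanism in \cite{Viv20}, where the same cancellation appears through the identity $\sin\theta\,(q_n-U_n)=-\mathrm{Im}(\delta_n e^{-in\theta})$.

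One concrete error: the identity $B_N=-\epsilon_1^2 C_N$ in case (5) is false. Already for $N=2$ one computes $B_2=(1-\epsilon_2^2)\epsilon_1^2+\epsilon_2^2$ while $-\epsilon_1^2C_2=\epsilon_1^2(2-\epsilon_1^2)$, and these disagree (e.g.\ take $\epsilon_2=0$). The conclusion $B_N\to0$ is still correct and reachable inside your own framework: $B_N=D_N-D_{N+1}$, and your estimate on the $D$-sequence (free solution $\cos(n\phi_N)+O(1/N)$ plus an $O(1/N)$ Duhamel correction, no symmetry needed) applies equally at indices $N$ and $N+1$, both of which tend to $-1$, so their difference tends to $0$. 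You should replace the false identity with this argument. A second, minor point worth making explicit: using Theorem~\ref{thm:A} as a black box you need matrix convergence $L_N\cdots L_1\to I$, not merely M\"obius convergence; this does follow because the product is lower triangular with bottom-right entry exactly $1$, so convergence of the M\"obius maps to the identity pins down all entries.
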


\begin{remark}
Conditions (2)–(3) show that errors of order \(1/N^2\) in the multiplicative phase \(\theta_k\) are admissible provided they satisfy summation/cancellation constraints, while the additive terms remain at scale \(1/N^2\). Conditions (4)–(5) cover the purely additive regime near the critical scale \(\pi/N\) familiar from Lavaurs theory, again allowing \(1/N^2\)-level modulations subject to symmetry constraints.
\end{remark}

We also will prove that the multiplicative case is inherently different than the additive one. We do that by exhibiting an example in which one sequence composition converges to the identity; however, a conjugate sequence, does not. 

\begin{mainthm}[Difference between additive and multiplicative]\label{thm:C}
There exist sequences $\{\epsilon_k\}$ and $\{\rho_k\}$ for $1\leq k\leq N$, such that:
\begin{enumerate}
\item For each $k$, the maps $\displaystyle{f_k(z)=\frac{\rho_k z}{1-z}}$ and $\displaystyle{g_k(z)=\frac{z}{1-z}+\epsilon_k^2}$ are conjugated.
\item We have $g_N \circ g_{N-1} \circ\cdots \circ g_1$ converges uniformly on compacts to the Identity.
\item The composition $f_N \circ f_{N-1}\circ\cdots \circ f_1(z)$ does not converge to $z$ for any $z$.
\end{enumerate}
\end{mainthm}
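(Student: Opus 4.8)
The plan is to use the classification of M\"obius transformations: for a suitable choice of parameters each of the maps $g_k$ and $f_k$ is globally conjugate to a rotation, and two elliptic M\"obius maps are conjugate precisely when their rotation numbers agree up to sign. I would first record the matrix representative $M_k=\begin{pmatrix} 1-\epsilon_k^2 & \epsilon_k^2 \\ -1 & 1\end{pmatrix}$ of $g_k$: it has $\det M_k=1$ and $\operatorname{tr} M_k=2-\epsilon_k^2$, so for real $\epsilon_k$ with $0<\epsilon_k<2$ the map $g_k$ is elliptic with two distinct fixed points and eigenvalues $\lambda_k^{\pm 1}$, where $\lambda_k=e^{i\phi_k}$ and $\cos\phi_k=1-\tfrac{\epsilon_k^2}{2}$; in particular $g_k$ is M\"obius--conjugate to $z\mapsto\lambda_k z$. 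On the other side, $f_k(z)=\tfrac{\rho_k z}{1-z}$ fixes $0$ with multiplier $\rho_k$ and fixes $1-\rho_k$ with multiplier $\rho_k^{-1}$, hence is conjugate to $z\mapsto\rho_k z$. Therefore, \emph{defining} $\rho_k:=\lambda_k$ (equivalently $\theta_k:=\tfrac{\phi_k}{2\pi}$) produces an explicit M\"obius transformation conjugating $f_k$ to $g_k$, which is~(1).

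For~(2), I would pin the scale by taking $\epsilon_k=\tfrac{\pi}{N}$ (or, more generally, $\epsilon_k=\tfrac{\pi}{N}+O(1/N^3)$). With this choice $g_k(z)=\tfrac{z}{1-z}+\epsilon_k^2$ is exactly the map of Theorem~\ref{thm:B}, case~(4), taken with $\theta_k=0$; hence $g_N\circ\cdots\circ g_1\to\mathrm{Id}$ uniformly on compact sets, which is~(2). (Case~(5) with $c_k\equiv 0$ would serve just as well.)

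For~(3), the crucial point is that every $f_k$ fixes the origin, so $F_N:=f_N\circ\cdots\circ f_1$ fixes $0$ and, by the chain rule, $F_N'(0)=\prod_{k=1}^N f_k'(0)=\prod_{k=1}^N\rho_k=\prod_{k=1}^N\lambda_k=e^{\,i\sum_{k=1}^N\phi_k}$. Expanding $\cos\phi_k=1-\tfrac{\epsilon_k^2}{2}$ with $\epsilon_k=\tfrac{\pi}{N}+O(1/N^3)$ gives $\phi_k=\tfrac{\pi}{N}+O(1/N^3)$, so $\sum_{k=1}^N\phi_k=\pi+O(1/N^2)\to\pi$ and hence $F_N'(0)\to e^{i\pi}=-1$. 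If $F_N$ converged to the identity uniformly on a neighbourhood of $0$, the Cauchy estimates would force $F_N'(0)\to 1$, a contradiction; indeed, writing $F_N(z)=\tfrac{\mu_N z}{\nu_N z+1}$ one has $\mu_N=\prod_k\rho_k\to-1$, so $F_N$ does not converge to $z$. This establishes~(3) and makes the asserted dichotomy transparent: the additive composition relaxes to the identity at the Lavaurs scale $\epsilon_k\sim\pi/N$, while the conjugate multiplicative phases $\theta_k\sim\tfrac{1}{2N}$ accumulate to $\tfrac12$ rather than to an integer, precisely because composing the $f_k$ is rigidly anchored at their common fixed point $0$, whereas the fixed points of the $g_k$ drift with $k$.

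The step I expect to demand the most care is the bookkeeping behind Step~1: checking that for the chosen real, small $\epsilon_k$ the map $g_k$ is genuinely elliptic with two distinct fixed points, that the assignment $\epsilon_k\mapsto\rho_k$ delivers an honest M\"obius conjugacy (and not merely a topological one), and that the rotation number $\phi_k$ is determined precisely enough that $\lim_N\sum_k\phi_k$ can actually be evaluated. A minor but worth-stating subtlety is what ``does not converge to $z$'' should mean here: since $F_N(0)=0$ for every $N$, the statement the argument proves --- and the one that contrasts with~(2) --- is that $F_N$ does not converge, even pointwise on an open set, to the identity map; if literal divergence is wanted one may additionally perturb $\epsilon_k$ within the $O(1/N^3)$ tolerance so that $\nu_N$ oscillates.
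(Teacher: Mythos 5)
Your argument for part (3) rests on a multiplier computation that is incorrect, and once it is corrected your chosen sequence stops being a counterexample. The matrix $M_k=\begin{pmatrix}1-\epsilon_k^2 & \epsilon_k^2\\ -1 & 1\end{pmatrix}$ of $g_k$ does have eigenvalues $\lambda_k^{\pm1}$ with $\lambda_k+\lambda_k^{-1}=2-\epsilon_k^2$, but the multiplier of the M\"obius map at its fixed points is the \emph{ratio} of the eigenvalues, namely $\lambda_k^{2}$, not $\lambda_k$. So $g_k$ is conjugate to $z\mapsto \lambda_k^2 z$, and conjugacy with $f_k$ forces $\rho_k=\lambda_k^{2}=e^{2i\phi_k}$ (equivalently $\sqrt{\rho_k}+1/\sqrt{\rho_k}=2-\epsilon_k^2$, which is the relation the paper uses); with your assignment $\rho_k=\lambda_k$ the two maps are not conjugate, so part (1) already fails. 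With the corrected $\rho_k$, your choice $\epsilon_k=\pi/N+O(1/N^3)$ gives $\phi_k=\pi/N+O(1/N^3)$, hence $\theta_k=\phi_k/\pi=1/N+O(1/N^3)$ and $\prod_k\rho_k=e^{2i\sum\phi_k}\to e^{2\pi i}=1$, not $-1$. Worse, these $\theta_k$ satisfy condition (1) of Theorem~\ref{thm:A}, so $f_N\circ\cdots\circ f_1\to\mathrm{Id}$ as well: at the scale $\epsilon_k=\pi/N+O(1/N^3)$ there is \emph{no} discrepancy between the additive and multiplicative compositions, and no choice within that tolerance can prove (3).

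The genuine content of Theorem~\ref{thm:C} lives at the $1/N^2$ scale, where the additive and multiplicative convergence criteria impose different cancellation patterns: the additive condition requires $\alpha_k+\alpha_{N-k}=O(1/N)$ (a palindromic pairing), while the multiplicative one requires $c_k+c_{k+1}=O(1/N)$ for odd $k$ or, more generally, $\bigl|\sum_k a_kT_k\bigr|=O(1/N)$ (a consecutive/weighted cancellation). The paper exploits this mismatch by taking $\theta_k=\pi/(N-1)$ for the first half of the indices and $\theta_k=\pi/(N+1)$ for the second half: the induced $\alpha_k$ are $\pm\pi+O(1/N)$ in blocks, so $\alpha_k+\alpha_{N-k}=O(1/N)$ and the $g$-composition converges, but the multiplicative sum does not cancel and a direct computation gives $q_N\to -2i/\pi\neq 0$, so $C_N=-q_N\not\to 0$ and $F_N$ does not converge to the identity. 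Note also that the obstruction is detected in the coefficient $C_N$ of the denominator, not in the multiplier at the origin, which in the paper's example also tends to $1$. Your closing remark about the meaning of ``does not converge to $z$'' is well taken (the paper's proof indeed restates (3) as failure of pointwise convergence for $z\neq 0$), but the mechanism you propose for producing the failure does not work.
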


Finally, we study what happens if we apply a random composition. We can weaken the conditions if we allow for a sequence of independent and bounded random variables with $0$ mean as the scaling factors of $1/N^2$. More explicitly:

\begin{mainthm}[Random perturbations]\label{thm:random}
For fixed \(N\), let \(\{\eta_{k}\}_{k=1}^{N+1}\) be independent, bounded, mean-zero random variables, and set
\[
   \epsilon_{k} = \frac{\pi}{N}+\frac{\eta_{k}}{N^{1+\delta}}\quad (\delta>0), 
   \qquad
   f_k(z) = \frac{z}{1-z} + \epsilon_k^2 .
\]
Then
\[
   f_N\circ f_{N-1}\circ \cdots\circ f_1 \longrightarrow \mathrm{Id}
\]
as \(N\to\infty\), uniformly on compact subsets of $\mathbb{C}$, with probability one.
\end{mainthm}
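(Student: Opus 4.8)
\emph{Proof strategy.}
The plan is to run the matrix / orthogonal‑polynomial machinery already used in the proofs of Theorems~\ref{thm:A} and~\ref{thm:B}, and to superimpose a Borel--Cantelli argument that upgrades the resulting deterministic estimates to an almost‑sure statement. First I would pass from the composition to a product of $2\times2$ matrices: the M\"obius map $f_k$ is represented by
\[
   M_k=\begin{pmatrix}1-\epsilon_k^2 & \epsilon_k^2\\ -1 & 1\end{pmatrix}\in SL_2(\mathbb{C}),
\]
so that $f_N\circ\cdots\circ f_1$ is represented by $\mathcal{M}_N=M_N\cdots M_1$, and convergence to the identity uniformly on compacts is equivalent to $\mathcal{M}_N$, normalised by its trace, tending to $\pm I$; concretely, to the off‑diagonal entries of $\mathcal{M}_N$ tending to $0$ and its two diagonal entries becoming equal, uniformly for $z$ in a compact set $K$.

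Next I would introduce the deterministic reference matrix $\bar M=\begin{pmatrix}1-\bar\epsilon^2 & \bar\epsilon^2\\ -1 & 1\end{pmatrix}$ with $\bar\epsilon=\pi/N$. Since $\bar M$ has determinant $1$ and trace $2\cos\bar\phi$ with $\bar\phi\approx\pi/N$, a short computation with Chebyshev polynomials of the second kind --- equivalently, the baby case of Lavaurs's theorem applied to $F(z)=z/(1-z)$, whose Lavaurs map is the identity --- gives $\bar M^N\to -I$, and the entries of the partial powers $\bar M^m$ $(0\le m\le N)$ are bounded on compacts by $O(N)$ for the $(2,1)$‑type entries and by $O(1)$ for the rest; these are exactly the orthogonal polynomials of the problem, evaluated at the prescribed points, and their sizes are already recorded in the proof of Theorem~\ref{thm:B}. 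Because $M_k-\bar M=\delta_kQ$ with $\delta_k:=\epsilon_k^2-\bar\epsilon^2$ and $Q=\begin{pmatrix}-1&1\\0&0\end{pmatrix}$ of rank one, the telescoping (Duhamel) identity gives
\[
   \mathcal{M}_N-\bar M^N=\sum_{r\ge1}\ \sum_{1\le k_1<\cdots<k_r\le N}\Big(\prod_{i=1}^{r}\delta_{k_i}\Big)\,\bar M^{N-k_r}Q\,\bar M^{k_r-k_{r-1}-1}Q\cdots Q\,\bar M^{k_1-1},
\]
and rank‑one‑ness of $Q$ collapses each summand to a scalar (a product of those orthogonal polynomials) times a fixed rank‑one matrix. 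Collecting terms, ``$\mathcal{M}_N\to\mathrm{Id}$ on $K$'' reduces, exactly as in the proof of Theorem~\ref{thm:B}, to showing that finitely many \emph{defect sums}
\[
   S^{(N)}_\ell(z)=\sum_{r\ge1}\ \sum_{k_1<\cdots<k_r}\Big(\prod_i\delta_{k_i}\Big)\,p^{(N)}_\ell(z;k_1,\dots,k_r)
\]
tend to $0$ uniformly on $K$, where each $p^{(N)}_\ell$ is polynomial of bounded degree in $z$ and polynomially bounded in $N$.

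The probabilistic input enters through the substitution $\delta_k=\tfrac{2\pi\eta_k}{N^{2+\delta}}+\tfrac{\eta_k^2}{N^{2+2\delta}}$ and the splitting $\eta_k^2=\sigma_k^2+(\eta_k^2-\sigma_k^2)$ with $\sigma_k^2:=\mathbb{E}[\eta_k^2]$. This writes every $S^{(N)}_\ell$ as a finite combination of: (i) a purely deterministic part, built from $\bar\epsilon$ and the bounded, slowly varying drifts $\sigma_k^2/N^{2+2\delta}$, which I would dispatch using precisely the estimates from the proof of Theorem~\ref{thm:B} (this drift sequence has the smallness and near‑symmetry those estimates require); and (ii) centred random parts of the form $\tfrac{1}{N^{1+\delta}}\sum_k\eta_k\,a^{(N)}_k(z)$ and $\tfrac{1}{N^{2+2\delta}}\sum_k(\eta_k^2-\sigma_k^2)\,b^{(N)}_k(z)$, together with their iterated analogues from the $r\ge2$ terms, where $a^{(N)}_k,b^{(N)}_k$ are deterministic, polynomial of bounded degree in $z$, and controlled on $K$ by the entry bounds above. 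For a centred part $T_N(z)=\sum_k\eta_k\,a^{(N)}_k(z)$ with $|a^{(N)}_k(z)|\le A_N$ on $K$, independence and the zero mean give $\mathbb{E}[T_N(z)]=0$ and $\operatorname{Var}(T_N(z))\le C\sum_k|a^{(N)}_k(z)|^2\le CNA_N^2$; feeding in the bounds on the $p^{(N)}_\ell$ reduces matters to checking that $\operatorname{Var} S^{(N)}_\ell(z_0)$ is summable in $N$ at each fixed $z_0$, whereupon $\sum_N\mathbb{P}(|S^{(N)}_\ell(z_0)|>t)<\infty$ for every $t>0$ and Borel--Cantelli yields $S^{(N)}_\ell(z_0)\to0$ almost surely. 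To pass from a point to uniform convergence on $K$, I would use that $S^{(N)}_\ell(\cdot)$ is a polynomial of degree bounded independently of $N$, so almost‑sure convergence to $0$ at enough nodes upgrades, by a Lagrange‑interpolation / maximum‑modulus estimate, to uniform almost‑sure convergence on $K$ (alternatively, almost‑sure convergence on a countable dense set plus normality of $\{f_N\circ\cdots\circ f_1\}$ on a fixed subdomain). Intersecting the finitely many full‑measure events over the nodes and over $\ell$ produces a single event of probability one on which $\mathcal{M}_N\to\mathrm{Id}$ uniformly on compacts.

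The main obstacle is the same mechanism that forces the symmetry hypotheses in Theorem~\ref{thm:B}: the intermediate powers $\bar M^m$ have entries of size $\sim N$ (the implosion ``gate'' is $\sim N$ iterates wide), so the defect sums weight the perturbations $\delta_k$ by factors as large as $N^2$, and a crude triangle‑inequality bound is useless. One must keep the oscillation of both the weights and the $\eta_k$ and harvest the $O(\sqrt N)$ cancellation of a sum of independent centred terms; the delicate points are (a) verifying that, after this cancellation, the variances of the defect sums decay at a rate summable in $N$ (this is where the size hypothesis on the random perturbations is used, and where the $\eta_k^2$‑drift must be shown to leave the effective number of iterates unchanged to the required precision), and (b) making that decay uniform in $z$ on compacts. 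This is precisely the gain over Theorem~\ref{thm:B}: the exact cancellation imposed there by the symmetry conditions is here supplied, for free, by independence.
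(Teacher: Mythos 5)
Your overall skeleton matches the paper's: represent the composition as a product of $2\times2$ matrices / a pair of three-term recursions, compare with the unperturbed Chebyshev case, get a probabilistic bound, and finish with Borel--Cantelli.  Two points are genuinely different, one in your favor and one that is a real gap.

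The gap is in the concentration step.  You propose to control the centred random part $T_N(z)=\sum_k\eta_k\,a^{(N)}_k(z)$ by its \emph{variance} and then apply Chebyshev and Borel--Cantelli.  But the weights here are products of two Chebyshev entries and therefore reach size $\sim N^2$ in the middle of the ``gate'', so with $\delta_k\sim\eta_k/N^{2+\delta}$ the variance of the leading defect sum is $\sum_k N^{-2(2+\delta)}\bigl(\sin^2(k\pi/N)/\sin^2(\pi/N)\bigr)^2\sim N^{1-2\delta}$.  That is summable in $N$ only for $\delta>1$, so $\sum_N\mathbb{P}(|S^{(N)}_\ell|>t)<\infty$ fails in the range $0<\delta\le1$ that the theorem is asserting.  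Higher fixed moments push the threshold down but never to arbitrary $\delta>0$; one needs sub-Gaussian tails.  This is precisely what the paper supplies: it keeps the \emph{implicit} comparison sum $\delta_n=\sum_j d_j q_j e^{ij\theta}$ (where $q_j$ is $\mathcal F_{j-1}$-measurable), observes this is a martingale with uniformly bounded increments $|d_jq_j|\le M/N^{1+\delta}$ (Lemma~\ref{lem:martingale}), and applies \emph{Azuma's inequality} (Lemma~\ref{lem:applyAzuma}) to get an exponential tail, from which a union bound and Borel--Cantelli finish (Proposition~\ref{prop:randombigO}).  Equivalently, after your Duhamel expansion you could apply Hoeffding to the leading iid sum $\sum_j d_j U_j e^{ij\theta}$; what is essential is an exponential tail bound, not a second-moment bound.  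You should replace ``variance summable $\Rightarrow$ Chebyshev'' with a sub-Gaussian concentration inequality.

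Two smaller remarks.  First, your treatment of uniformity (interpolation nodes, normality) is superfluous: the composition is a M\"obius map $F_N(z)=(A_Nz+B_N)/(C_Nz+D_N)$, and the paper simply shows the four coefficients converge, from which uniform convergence on compacta is immediate.  Second, your explicit splitting $\eta_k^2=\sigma_k^2+(\eta_k^2-\sigma_k^2)$, putting the $\sigma_k^2$ drift into the deterministic analysis and centring the remainder, is more careful than what the paper does: the paper writes $d_k=\xi_k/N^{2+\delta}$ with $\xi_k$ ``mean zero'' even though the $\eta_k^2/N^{2+2\delta}$ term is not centred.  That centring step is a genuine (if minor) improvement in rigor; keep it.
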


\begin{remark}
The random theorem shows that decreasing the error scale to \(N^{-(1+\delta)}\) (any \(\delta>0\)) and imposing only boundedness and zero mean suffices for almost sure convergence, without the arithmetic correlations required in the deterministic \(1/N^2\)-scale cases.
\end{remark}

The paper is organized as follows: in Section \ref{sec:compositionoflineartransformations} we link iterations of a sequence of linear transformations to orthogonal polynomials. 
In Section \ref{sec:orthogonalpolynomials}, we review the relevant results on orthogonal polynomials.  
In Section \ref{sec:nonautonomousimplosions}, we prove Theorems \ref{thm:A} and \ref{thm:B}. 
In Section \ref{sec:differencewiththeadditive} we discuss the differences between the additive and multiplicative case, as stated in Theorem \ref{thm:C}. In Section \ref{sec:bifurcationsforskew-products} we give some examples of skew parabolic maps on which we can apply Theorems \ref{thm:A} and \ref{thm:B} . 
We prove Theorem \ref{thm:random} in Section \ref{sec:randomcompositions}.

\section{Compositions of Linear Transformations}\label{sec:compositionoflineartransformations}

In this section we describe how the compositions of linear fractional transformations can be expressed in terms of orthogonal polynomials.

\begin{lemma}\label{generalFormComposition}
Given \[
f_k(z)= \frac{\al_k z+\be_k}{\ga_k z +\de_k},\] an infinite sequence of non-trivial linear fractional transformations for $k\geq 1$ (i.e., assume $\al_k\de_k\neq \be_k\ga_k$ for all $k$). Assume that $\gamma_k \neq 0$ for all $k$.

Denote by $F_N:=f_N \circ f_{N-1} \circ \cdots \circ f_1$. Then we have the following formula:
\begin{align*}
F_N(z) = \frac{A_N z + B_N}{C_N z +D_N},
\end{align*}
where the coefficients are given by the following recurrence formulas:
\begin{align}
C_{k+1} &= \left(\de_{k+1}+\frac{\ga_{k+1}\al_{k}}{\ga_{k}}\right)C_k - \frac{\ga_{k+1}}{\ga_{k}}\left(\al_k\de_k-\be_k\ga_k\right)C_{k-1}, & C_0 &= 0, & C_1&= \ga_1;& \nonumber \\
D_{k+1} &= \left(\de_{k+1}+\frac{\ga_{k+1}\al_{k}}{\ga_{k}}\right)D_k - \frac{\ga_{k+1}}{\ga_{k}}\left(\al_k\de_k-\be_k\ga_k\right)D_{k-1}, & D_0 &= 1, & D_1&= \de_1;& \label{orthogonal}
\end{align}
and
\begin{align} \label{orthogonal2}
A_{k} &= \frac{1}{\ga_{k+1}}(C_{k+1}-\de_{k+1}C_{k});\nonumber\\
B_{k} &= \frac{1}{\ga_{k+1}}(D_{k+1}-\de_{k+1}D_{k}) .
\end{align}
\end{lemma}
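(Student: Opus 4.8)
The plan is to pass to the standard matrix representation of Möbius transformations. Associate to each $f_k$ the matrix $M_k=\begin{pmatrix}\al_k&\be_k\\ \ga_k&\de_k\end{pmatrix}$, which is invertible by the hypothesis $\al_k\de_k\neq\be_k\ga_k$. Since composition of linear fractional transformations corresponds to matrix multiplication, the partial products $\begin{pmatrix}A_N&B_N\\ C_N&D_N\end{pmatrix}:=M_N M_{N-1}\cdots M_1$ satisfy $F_N(z)=\frac{A_N z+B_N}{C_N z+D_N}$. It then remains to check that the entries of these products obey the asserted recurrences, which is a purely algebraic matter.

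First I would record the one-step relations obtained by peeling $M_{k+1}$ off the left of the product: from $\begin{pmatrix}A_{k+1}&B_{k+1}\\ C_{k+1}&D_{k+1}\end{pmatrix}=M_{k+1}\begin{pmatrix}A_k&B_k\\ C_k&D_k\end{pmatrix}$ one reads $A_{k+1}=\al_{k+1}A_k+\be_{k+1}C_k$ and $C_{k+1}=\ga_{k+1}A_k+\de_{k+1}C_k$, together with the same identities with $B,D$ in place of $A,C$. Solving the second relation for $A_k$ — here the hypothesis $\ga_{k+1}\neq 0$ is used — gives $A_k=\ga_{k+1}^{-1}(C_{k+1}-\de_{k+1}C_k)$, which is precisely \eqref{orthogonal2}, and likewise $B_k=\ga_{k+1}^{-1}(D_{k+1}-\de_{k+1}D_k)$.

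To obtain the three-term recurrence \eqref{orthogonal} I would eliminate the $A$-sequence. Substituting $A_{k-1}=\ga_k^{-1}(C_k-\de_k C_{k-1})$ into $A_k=\al_k A_{k-1}+\be_k C_{k-1}$ expresses $A_k$ as a linear combination of $C_k$ and $C_{k-1}$; feeding this back into $C_{k+1}=\ga_{k+1}A_k+\de_{k+1}C_k$ and simplifying the coefficient of $C_{k-1}$ so that the determinant $\al_k\de_k-\be_k\ga_k$ appears yields exactly the displayed recurrence. Since $B$ and $D$ satisfy the same one-step relations as $A$ and $C$, the identical manipulation produces the recurrence for $D_k$.

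Finally I would pin down the initial data from the small cases: the empty product is the identity matrix, so $(A_0,B_0,C_0,D_0)=(1,0,0,1)$, matching $C_0=0$, $D_0=1$; and $\begin{pmatrix}A_1&B_1\\ C_1&D_1\end{pmatrix}=M_1$ gives $C_1=\ga_1$, $D_1=\de_1$, which one checks is consistent with the recurrence evaluated at $k=1$ (it reproduces $M_2 M_1$). I expect no genuine obstacle here — the argument is essentially bookkeeping — beyond the mild point that formula \eqref{orthogonal2} for $A_N,B_N$ invokes $\ga_{N+1},\de_{N+1}$, which is why the statement is phrased for an infinite sequence; alternatively, $A_N=\al_N A_{N-1}+\be_N C_{N-1}$ uses only the data through index $N$.
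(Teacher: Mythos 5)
Your proposal is correct and follows essentially the same route as the paper: the one-step relations $C_{k+1}=\ga_{k+1}A_k+\de_{k+1}C_k$, $D_{k+1}=\ga_{k+1}B_k+\de_{k+1}D_k$ (which the paper takes directly from the definition of composition and you phrase via matrix products), solving for $A_k,B_k$ to get \eqref{orthogonal2}, and eliminating the $A$- and $B$-sequences to obtain the three-term recurrence \eqref{orthogonal}. Your explicit verification of the initial data and of where $\ga_{k+1}\neq 0$ is used is a welcome addition but not a different argument.
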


\begin{proof}
For all $k$, by the definition of $C_{k}$ and $D_{k}$
\begin{align*}
C_{k+1} &= \gamma_{k+1}A_{k}+\delta_{k+1}C_{k};\\
D_{k+1} &= \gamma_{k+1}B_{k}+\delta_{k+1}D_{k}.
\end{align*} 
By solving for $A_{k}$ and $B_{k}$, we find \eqref{orthogonal2}. 
The formulas for $C_{k+1}$ and $D_{k+1}$ follow by writing the definitions of $C_{k+1}$ and $D_{k+1}$, substituting in the definition of $A_{k}$ and $B_{k}$ respectively, then finally replacing the new $A_{k-1}$ and $B_{k-1}$ terms using the formulas from 
equations \eqref{orthogonal2}. Rearranging the remaining terms yields the result.
\end{proof}

\begin{remark}\mbox{}
\begin{enumerate}
\item Note that we can also obtain expressions for $A_k$ and $B_k$ as recurrence equations, similarly as in equation \eqref{orthogonal}.
\item Since there is a degree of freedom as to how we write each fractional transformation, we can always normalize them so that one coefficient on $f_k$ is a given constant (unless that coefficient is $0$ to begin with).  In the following we will normalize each matrix so that all $\ga_k$ are equal whenever $\gamma_k\neq 0$ for all $k$.
\item Now if we make each $\displaystyle{f_k(z) =\frac{z}{1-z}+\epsilon^2}$, i.e. $\al_k=1-\epsilon^2, \be_k=\epsilon^2, \ga_k=-1, \de_k=1$ for all $k$ then we obtain the following equations for $C_k$ and $D_k$:
\begin{align*}
    C_{k+1} &= (2-\epsilon^2)C_k - C_{k-1}, & C_0 &= 0, & C_1&= -1; &\\
    D_{k+1} &= (2-\epsilon^2)D_k - D_{k-1}, & D_0 &= 1, & D_1&= 1.&
\end{align*}
As noted in \cite{Viv20}, these are precisely the classical Chebyshev polynomials, linking
the iteration of these transformations to orthogonal polynomial theory.
\end{enumerate}
\end{remark}

We apply now Lemma \ref{generalFormComposition} to the family of maps that was mentioned in the introduction. 

\begin{lemma}\label{lem:recursivepoly}
Let $(\epsilon_k,\rho_k)_{k \geq 1}$ be sequences of complex numbers. Consider the 
maps $\displaystyle{f_{k}(z) = \frac{\rho_{k}z}{1-z}+\epsilon_{k}^2}$. 
Denote  by $F_N:=f_N \circ f_{N-1} \circ \cdots \circ f_1$ as above. Then: 
 \begin{align*}
F_N(z) = \frac{A_N z + B_N}{C_N z +D_N},
\end{align*}
where the coefficients are given by the following recurrence formulas:
\begin{align*}
A_{k} &= q_{k+1}-q_{k};\\
B_{k} &= r_{k}-r_{k+1}; \\
C_{k} &= -q_{k};\\
D_{k} &= r_{k};
\end{align*}
for:
\begin{align*}
q_{k+1} &= (1+\rho_k-\epsilon_{k}^{2}) q_k-\rho_k q_{k-1}, & q_0&=0, & q_1&=1;& \nonumber \\
r_{k+1} &= (1+\rho_k-\epsilon_{k}^{2}) r_k-\rho_k r_{k-1},& r_0&=1, & r_1&=1. &    
\end{align*}
\end{lemma}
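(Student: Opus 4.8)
The plan is to obtain the statement as a direct specialization of Lemma~\ref{generalFormComposition}. First I would put each $f_k$ into standard M\"obius form by clearing the denominator: writing
\[
   f_k(z) = \frac{\rho_k z + \epsilon_k^2(1-z)}{1-z} = \frac{(\rho_k-\epsilon_k^2)z + \epsilon_k^2}{-z+1},
\]
one reads off $\al_k = \rho_k-\epsilon_k^2$, $\be_k = \epsilon_k^2$, $\ga_k = -1$, $\de_k = 1$. The determinant is $\al_k\de_k - \be_k\ga_k = (\rho_k-\epsilon_k^2) + \epsilon_k^2 = \rho_k$, so the hypotheses of Lemma~\ref{generalFormComposition} (non-triviality $\al_k\de_k\neq\be_k\ga_k$ and $\ga_k\neq0$) hold precisely when $\rho_k\neq 0$; the degenerate case $\rho_k=0$, in which $f_k$ is constant, can be noted and set aside.

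Next I would substitute these coefficients into the recurrences \eqref{orthogonal}. Since $\ga_k\equiv -1$, the ratios simplify: $\ga_{k+1}/\ga_k = 1$ and $\ga_{k+1}\al_k/\ga_k = \al_k$. Hence the recurrence for $C_k$ collapses to
\[
   C_{k+1} = (\de_{k+1}+\al_k)C_k - \rho_k C_{k-1} = (1+\rho_k-\epsilon_k^2)C_k - \rho_k C_{k-1},
   \qquad C_0 = 0,\ C_1 = \ga_1 = -1,
\]
and the identical recurrence holds for $D_k$ with $D_0 = 1$, $D_1 = \de_1 = 1$.

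Then I would introduce the substitution $q_k := -C_k$ and $r_k := D_k$. The sign flip sends the initial data to $q_0 = 0$, $q_1 = 1$ while leaving the three-term linear recurrence invariant, which reproduces exactly the stated recurrences for $q_k$ and $r_k$, and gives $C_k = -q_k$, $D_k = r_k$. Finally, feeding this into \eqref{orthogonal2} with $\ga_{k+1} = -1$, $\de_{k+1} = 1$ yields
\[
   A_k = -\left(C_{k+1}-C_k\right) = q_{k+1}-q_k, \qquad B_k = -\left(D_{k+1}-D_k\right) = r_k - r_{k+1},
\]
which completes the identification.

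I do not expect a genuine obstacle: the argument is purely a specialization and a relabeling. The only points needing a moment of care are the sign bookkeeping forced by $\ga_k = -1$ (which is exactly what makes the reversal $q_k = -C_k$ the natural normalization, so that the recurrences acquire the clean form $q_{k+1} = (1+\rho_k-\epsilon_k^2)q_k - \rho_k q_{k-1}$), and the mild hypothesis $\rho_k\neq 0$ required to legitimately invoke Lemma~\ref{generalFormComposition}.
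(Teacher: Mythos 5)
Your proposal is correct and follows the same route as the paper: specialize Lemma~\ref{generalFormComposition}, collapse the recurrences using constant $\ga_k$, and relabel $q_k=-C_k$, $r_k=D_k$. One small point: the paper's own proof states the specialization as $\al_k=\rho_k$, $\ga_k=-\rho_k$, but this appears to be a typo, since $\dfrac{\rho_k z+\epsilon_k^2}{-\rho_k z+1}\neq \dfrac{\rho_k z}{1-z}+\epsilon_k^2$ when $\epsilon_k\neq 0$; your identification $\al_k=\rho_k-\epsilon_k^2$, $\be_k=\epsilon_k^2$, $\ga_k=-1$, $\de_k=1$ (with determinant $\rho_k$) is the one that actually produces the recurrences in the statement, and your sign bookkeeping is exactly right.
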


\begin{proof}
This is the specialization of Lemma~\ref{generalFormComposition} to $\alpha_k=\rho_k$, $\beta_k=\epsilon_k^2$, $\gamma_k=-\rho_k$, $\delta_k=1$, with a straightforward computation.
\end{proof}

When all $\epsilon_k$ are equal to $0$, we obtain:

\begin{cor}\label{cor:onlyrho}
Let $(\rho_k)_{k \geq 1}$ sequences of complex numbers. Consider the 
maps $\displaystyle{f_{k}(z) = \frac{\rho_{k}z}{1-z}}$. 
Denote  by $F_N:=f_N \circ f_{N-1} \circ \cdots \circ f_1$ as above. Then: 
 \begin{align*}
F_N(z) = \frac{A_N z}{C_N z +1},
\end{align*}
where the coefficients are given by the following recurrence formulas:
\begin{align*}
A_{k} &= q_{k+1}-q_{k};\\
C_{k} &= -q_{k};
\end{align*}
where:
\begin{align*}
q_{k+1} &= (1+\rho_k) q_k-\rho_k q_{k-1}, & q_0&=0, & q_1&=1. &
\end{align*}
\end{cor}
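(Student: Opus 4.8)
The plan is to obtain the corollary as the special case $\epsilon_k\equiv 0$ of Lemma~\ref{lem:recursivepoly}. Substituting $\epsilon_k=0$ into the two scalar recurrences of that lemma turns them into $q_{k+1}=(1+\rho_k)q_k-\rho_k q_{k-1}$ with $q_0=0,\ q_1=1$, and $r_{k+1}=(1+\rho_k)r_k-\rho_k r_{k-1}$ with $r_0=1,\ r_1=1$; the first of these is exactly the recurrence asserted in the statement. It then remains only to simplify $F_N(z)=\tfrac{A_N z+B_N}{C_N z+D_N}$, where Lemma~\ref{lem:recursivepoly} gives $A_k=q_{k+1}-q_k$, $B_k=r_k-r_{k+1}$, $C_k=-q_k$, and $D_k=r_k$.

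The one point that needs checking is that the $r$-sequence is identically equal to $1$. This is a one-line induction: the base cases $r_0=r_1=1$ are given, and if $r_{k-1}=r_k=1$ then $r_{k+1}=(1+\rho_k)\cdot 1-\rho_k\cdot 1=1$, the cancellation occurring because the coefficients $(1+\rho_k)$ and $-\rho_k$ of the recursion sum to $1$. Conceptually, $B_N=0$ is already forced by the fact that each $f_k$ fixes the origin, so $F_N(0)=0$ and the numerator $A_N z+B_N$ must vanish at $z=0$; the induction is what upgrades this to the cleaner normalization $D_N=1$. Given $r_k\equiv 1$ we get $B_k=r_k-r_{k+1}=0$ and $D_k=r_k=1$, hence $F_N(z)=\tfrac{A_N z}{C_N z+1}$ with $A_k=q_{k+1}-q_k$ and $C_k=-q_k$, as claimed.

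There is no serious obstacle here: the corollary is a direct specialization, and the only genuinely substantive observation is the trivial stability of the constant solution of the $r$-recursion above. The one mild caveat is that applying Lemma~\ref{lem:recursivepoly} (via Lemma~\ref{generalFormComposition} with $\gamma_k=-\rho_k$) implicitly requires $\rho_k\neq 0$, but this is automatic since $\rho_k=0$ would make $f_k$ constant rather than a non-trivial linear fractional transformation, so no further hypothesis is needed.
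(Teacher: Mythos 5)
Your proof is correct and follows the same route as the paper, which simply presents the corollary as the specialization $\epsilon_k\equiv 0$ of Lemma~\ref{lem:recursivepoly}. The only detail you add — the one-line induction showing $r_k\equiv 1$, hence $B_N=0$ and $D_N=1$ — is exactly the step the paper leaves implicit, and your verification of it is right.
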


In the next section we focus on the detailed study of these recursions.

\section{Orthogonal Polynomials}\label{sec:orthogonalpolynomials}

We now study the orthogonal polynomials arising from the recursive sequences in Lemma \ref{lem:recursivepoly}.
Recall the sequence:
\begin{align*}
q_{k+1} &= (1+\rho_k-\epsilon_k^2) q_{k}-\rho_k q_{k-1}, & q_0&=0,& q_1&=1, &
\end{align*}
where the coefficients depend on the sequences $(\rho_k,\epsilon_k)$.
Our goal is to understand, under which conditions on $\rho_k$ and $\epsilon_k$, the value $q_N$ tends to $0$ as $N$ tends to infinity.
	
	\textbf{Past work}: The case $\rho_k \equiv 1$
 was studied in detail by Vivas \cite{Viv20}, who obtained conditions on perturbations of the form  $\epsilon_k = \pi/N + \alpha(k)/N^2$ to ensure $q_N = O(1/N)$. The main tool was to compare $q_k$ with the Chebyshev polynomials: the sequence $U_k$, given by $U_{k+1}=(2-\epsilon^2)U_k-U_{k-1}$.
	
When the $\rho_k$ are allowed to vary on the unit circle or close to it, however, this approach no longer works. Comparing to the Chebyshev polynomials $U_k$ in this case does not lead to useful bounds and do not reflect the structure of the new recurrence.
	
\textbf{New sequence}: 	With that in mind, we compare each $q_{k}$ with the recursive family arising in the autonomous rotation case (see Corollary \ref{cor:onlyrho}), that is where each $\rho_k = \rho$. Denote this sequence by $T_{k}$, defined by:
\begin{align*}
T_{k+1} &= (1+\rho) T_{k}-\rho T_{k-1}, & T_0&=0, & T_1&=1. &
\end{align*}
By induction, we can show
\begin{align*}
T_{k} = 1+ \rho +\rho^{2} + \dots +\rho^{k-1} = \frac{1-\rho^k}{1-\rho}.
\end{align*}
In particular, when $\rho=e^{2\pi i/N}$ we can rewrite:
\begin{align}
T_k = e^{\frac{i\pi (k-1)}{N}}\frac{\sin(\pi k/N)}{\sin(\pi/N)}. \label{eq: trigT_k}
\end{align}
    From this formula, we will frequently reference that $T_N=0$ and $T_{N+1}=1$, as illustrated in Figure~\ref{fig:tklargemodulus}, and that $|T_k|$ is maximized when $k=\tfrac{N}{2}$, with the maximum approximately $N/\pi$ when $N$ is large.

\begin{figure}[H]
  \centering
  \begin{minipage}[b]{0.48\linewidth}
    \centering
    \includegraphics[height=6cm]{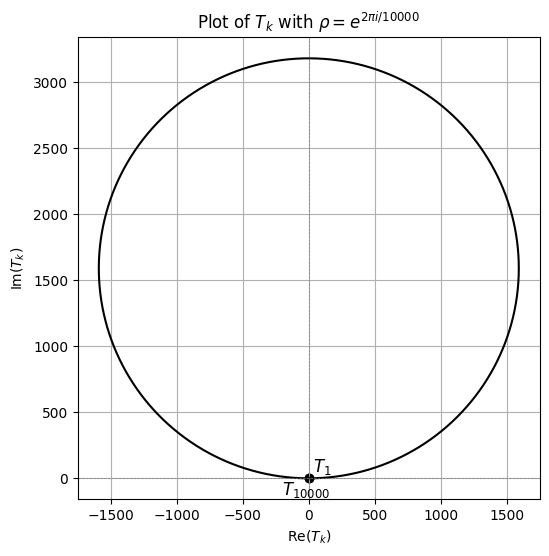}
  \end{minipage}
  \hfill
  \begin{minipage}[b]{0.48\linewidth}
    \centering
    \includegraphics[height=6cm]{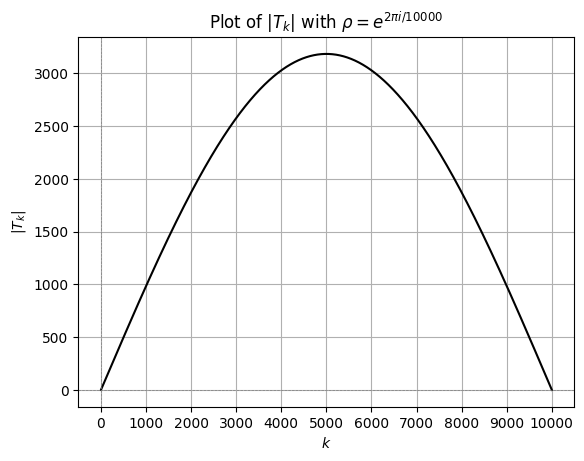}
  \end{minipage}
  \caption{Graph of $T_k$ and $|T_k|$ for $N=10,000$.}
  \label{fig:tklargemodulus}
\end{figure}	

    As a first application we observe the following example:
	
	\begin{exmp}\label{exmp: basic}
		Let $\displaystyle{f_{k}(z) = \frac{\rho z}{1-z}}$ for all $k\geq 1$, for $\rho=e^{2\pi i/N}$ for a fixed $N$. Then by Corollary \ref{cor:onlyrho} and equation \eqref{eq: trigT_k}, the coefficients of $F_{N}$ are given by 
		\begin{align*}
			A_{N} = T_{N+1}-T_{N} = 1,\ B_{N} = 0,\ C_{N} = -T_{N} = 0,\ D_{N}= 1.
		\end{align*} 
		Thus, $F_{N}(z)= z$.
	\end{exmp}	

We use the orthogonal polynomials ${T_k}$ to derive meaningful bounds for the sequence ${q_k}$. This is achieved through a formula that expresses the difference between ${q_k}$ and ${T_k}$. We first state this result in a general form by considering any sequences of  $a_k,$ and $b_{k}$ in $\mathbb{C}$ as additive perturbations to the coefficients in the recursive formula.   

	\begin{lemma}\label{lem:relationship}
		Consider the recursive polynomials 
		\begin{align}
			T_{k+1} &= (1+\rho) T_{k}-\rho T_{k-1}, & T_0&=0, & T_1&=1;& \nonumber \\ 
			q_{k+1} &= (1+\rho+a_k) q_{k}-(\rho+b_k) q_{k-1}, & q_0&=0, & q_1&=1.  & \label{eq:qk formula}
		\end{align}
		Their difference is given by 
		\begin{align*}
			q_k-T_k = \sum_{j=1}^{k-1}[a_jq_j-b_jq_{j-1}]T_{k-j}, \quad k\geq 2.
		\end{align*}
	\end{lemma}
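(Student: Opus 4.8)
The plan is to prove the identity by strong induction on $k$, treating the right-hand side as an explicit ``variation of parameters'' formula for the inhomogeneous recurrence satisfied by the difference $d_k := q_k - T_k$. First I would subtract the two recurrences in \eqref{eq:qk formula} to see that $d_k$ satisfies
\[
   d_{k+1} = (1+\rho)d_k - \rho d_{k-1} + \bigl(a_k q_k - b_k q_{k-1}\bigr),
   \qquad d_0 = d_1 = 0;
\]
that is, $d_k$ obeys the \emph{same} homogeneous recurrence as $T_k$ but with a forcing term $g_k := a_k q_k - b_k q_{k-1}$ injected at step $k$. The claim is then precisely that $d_k = \sum_{j=1}^{k-1} g_j\, T_{k-j}$, the convolution of the forcing sequence with the fundamental solution $T$.

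The induction runs as follows. The base cases $k=2$ (where the sum is $g_1 T_1 = a_1 q_1 = a_1$, matching $d_2 = q_2 - T_2 = (1+\rho+a_1) - (1+\rho) = a_1$) and, if convenient, $k=1$ (empty sum, $d_1 = 0$) are immediate. For the inductive step, assume the formula holds for $k$ and $k-1$. Plug these into the inhomogeneous recurrence above:
\[
   d_{k+1} = (1+\rho)\sum_{j=1}^{k-1} g_j T_{k-j} - \rho \sum_{j=1}^{k-2} g_j T_{k-1-j} + g_k.
\]
Now reindex so that both sums run over the same range (absorbing the missing $j=k-1$ term of the second sum using $T_0 = 0$), and collect the coefficient of each $g_j$: it becomes $(1+\rho)T_{k-j} - \rho T_{k-1-j} = T_{k+1-j}$, exactly by the defining recurrence for $T$. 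The leftover term $g_k$ is $g_k T_1$, which supplies the $j=k$ summand. Hence $d_{k+1} = \sum_{j=1}^{k} g_j T_{k+1-j}$, completing the induction.

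The only real point requiring care — the ``obstacle,'' though it is a mild one — is the bookkeeping at the edges of the summation ranges when shifting indices: one must verify that the terms $T_0 = 0$ and $T_1 = 1$ make the boundary contributions land correctly, and that the low-$j$ initial conditions $q_0 = 0$, $q_1 = 1$, $T_0 = 0$, $T_1 = 1$ are consistent with the empty/near-empty sums for small $k$. There is no analytic difficulty here; the identity is purely algebraic and holds for arbitrary complex sequences $a_k, b_k$, with the $q_j$ on the right-hand side being exactly the solution of \eqref{eq:qk formula} (so the formula is implicit, not a closed form, but it is nonetheless the key tool for the later estimates comparing $q_k$ to $T_k$).
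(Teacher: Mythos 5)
Your proof is correct and matches the paper's approach exactly: the paper's proof is the one-line remark that the identity ``follows by induction by subtracting the two relationships,'' which is precisely the argument you carry out in detail (the inhomogeneous recurrence for $q_k-T_k$ with forcing $a_kq_k-b_kq_{k-1}$, solved by convolution with $T$). Your bookkeeping at the boundary terms, using $T_0=0$ and $T_1=1$, is the only nontrivial point and you handle it correctly.
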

    \begin{proof}
    This follows by induction by subtracting the two relationships. 
    \end{proof}

	\begin{remark}
   In our applications of this lemma, the sequence ${b_k}$ represents how far our multiplicative rotation deviates from the root of unity $e^{2\pi i/N}$; that is, $b_k = \rho_k - \rho$. The sequence ${a_k}$ is defined by $a_k = b_k - \epsilon_k^2$.

In the case where $\epsilon_k \equiv 0$ for all $k$, the expression for $a_{k}$ simplifies to $a_k = b_k$. Otherwise, the sequence ${a_k}$ incorporates an additional perturbation term $\epsilon_k^2$.
    \end{remark}

We will use the estimate above repeatedly in the following lemmas.

\subsection{Quadratic errors}

Fix $\rho=e^{2\pi i/N}$. Our goal is to identify conditions on $a_k$ and $b_k$ that guarantee the errors \[|q_N|=|q_N-T_N|\quad\text{and}\quad|q_{N+1}-1|=|q_{N+1}-T_{N+1}|\] remain small. 

Note that if we suppose $a_{k}=b_{k}$ and 
\[
\rho+b_k=\rho+a_k=e^{2 \pi i/(N+1)},
\]
then $a_k=b_k=O(1/N^2)$. We will refer to such a choice of $a_{k}$ and $b_{k}$ as quadratic or the resulting orthogonal polynomials as having quadratic errors.  In this case, the recursion yields
\[
q_{k+1}=(1+e^{2 \pi i/(N+1)})q_k-e^{2 \pi i/(N+1)}q_{k-1},
\]
which allow us to explicitly compute the formula $q_k= e^{\frac{i\pi (k-1)}{N+1}}\frac{\sin(\pi k/(N+1))}{\sin(\pi/(N+1))} $ and the values:
\[|q_N|=1\quad\text{and}\quad q_{N+1}=0.\]

In particular, the mere condition $b_k,a_k = O(1/N^2)$ does not suffice to ensure that $|q_N-T_N|=o(1)$.

Nevertheless, when $a_k$ and $b_k$ are quadratic and comparable, we can still establish a uniform control on successive differences.

\begin{lemma}\label{lem:q_kdifferencebound}
    Fix $N\in\mathbb{N}$ and set $\rho=e^{2\pi i/N}$. Consider the recurrence \eqref{eq:qk formula}
    \begin{align*}
        q_{k+1} &= (1+\rho+a_k) q_{k}-(\rho+b_k) q_{k-1}, & q_0&=0, & q_1&=1, &
    \end{align*}
    for $1\leq k\leq N+1$.
    Assume there is a constant $C>0$, independent of $N$, such that for all $k$
\[
a_k=\frac{c_k}{N^2}+E_{1,k},\qquad b_k=\frac{c_k}{N^2}+E_{2,k},
\]
with $|c_k|\le C$ and $|E_{j,k}|\le C/N^4$ ($j=1,2$). 

    Then:
    \begin{itemize}
    \item There exists a constant $\tilde{C}>0$, independent of both $k$ and $N$, such that
    \[
        |q_k - q_{k-1}| \leq \tilde{C} \quad \text{for all $1\leq k\leq N+1$}.
    \]
    \item There exists a constant $C'>0$, independent of $k$ and $N$, such that 
    \begin{equation*}
        |q_k-T_k|\leq C'  \quad \text{for all $1\leq k\leq N+1$}.
    \end{equation*}
    \end{itemize}
\end{lemma}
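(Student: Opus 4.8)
The plan is to bound the successive differences $u_k := q_k - q_{k-1}$ by a discrete Gronwall argument, then deduce the bound on $|q_k - T_k|$ from Lemma~\ref{lem:relationship}. First I would rewrite the recurrence \eqref{eq:qk formula} in terms of $u_k$. Subtracting consecutive instances of the recursion and using $q_k - \rho q_{k-1} = \sum_{j\le k}\rho^{k-j} u_j$ (a telescoping identity coming from $q_{k+1} - \rho q_k = (q_k - \rho q_{k-1}) + a_k q_k - b_k q_{k-1}$ together with $q_1 - \rho q_0 = 1$), one gets
\[
u_{k+1} = q_{k+1} - q_k = (\rho - 1) q_k + \big(q_k - \rho q_{k-1}\big) + a_k q_k - b_k q_{k-1}.
\]
Rather than carry $q_k$ itself, I would iterate: write $q_k - \rho q_{k-1} = 1 + \sum_{j=2}^{k-1}(a_j q_j - b_j q_{j-1})\,\rho^{\,k-1-j}$, which follows by induction directly from \eqref{eq:qk formula}. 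Since $|\rho| = 1$, this gives
\[
|q_k - \rho q_{k-1}| \le 1 + \sum_{j=2}^{k-1} \big(|a_j|\,|q_j| + |b_j|\,|q_{j-1}|\big).
\]

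The key is now to control $|a_k|\,|q_k| + |b_k|\,|q_{k-1}|$ in terms of the $u_j$'s. Here the hypothesis $a_k = c_k/N^2 + E_{1,k}$, $b_k = c_k/N^2 + E_{2,k}$ with $|c_k|\le C$ and $|E_{j,k}| = O(1/N^4)$ is essential: writing $q_k = \sum_{j=1}^{k} u_j$ we have $a_k q_k - b_k q_{k-1} = \frac{c_k}{N^2}u_k + (E_{1,k}q_k - E_{2,k}q_{k-1})$, and in the error-free principal part the coefficient $c_k/N^2$ multiplies only the increment $u_k$, not the (possibly large, of size $\sim N$) partial sum $q_k$. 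So the cancellation $a_k \approx b_k$ is what prevents a term of size $N\cdot N^{-2}=N^{-1}$ summed $N$ times — which would be $O(1)$ and useless for Gronwall — from appearing; instead we get
\[
|a_j q_j - b_j q_{j-1}| \;\le\; \frac{C}{N^2}\,|u_j| \;+\; \frac{C}{N^4}\big(|q_j| + |q_{j-1}|\big)\;\le\;\frac{C}{N^2}\,|u_j| + \frac{2C}{N^4}\sum_{i\le j}|u_i|.
\]
Feeding this back, with $S_k := \sum_{j=1}^{k}|u_j|$, I obtain a recursive inequality of the shape $|u_{k+1}| \le |\rho - 1|\,S_k + 1 + \frac{C}{N^2}S_k + \frac{C}{N^3}\sum_{j\le k} S_j$, hence $S_{k+1} \le S_k + |u_{k+1}| \le (1 + \frac{C'}{N}) S_k + O(1) + \frac{C}{N^3}\sum_{j\le k}S_j$, using $|\rho - 1| = O(1/N)$. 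A standard discrete Gronwall estimate over $k \le N+1$ then yields $S_k \le \tilde C$ uniformly, which in particular gives $|u_k| = |q_k - q_{k-1}| \le \tilde C$, the first assertion; the double-sum error term is harmless since $\frac{1}{N^3}\cdot N \cdot \tilde C \cdot N = O(\tilde C/N)$ after summation.

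For the second bullet, I would apply Lemma~\ref{lem:relationship} with the present $a_k, b_k$:
\[
q_k - T_k = \sum_{j=1}^{k-1} \big(a_j q_j - b_j q_{j-1}\big) T_{k-j}.
\]
Using the bound $|a_j q_j - b_j q_{j-1}| \le \frac{C}{N^2}|u_j| + \frac{2C}{N^4} S_j \le \frac{C\tilde C}{N^2} + \frac{2C\tilde C}{N^3}$ just established, together with $|T_{k-j}| \le N/\pi + O(1)$ from \eqref{eq: trigT_k} (the maximum of $|T_m|$ noted after that formula), each term is $O\!\big(\frac{1}{N^2}\cdot N\big) = O(1/N)$, and summing at most $N$ of them gives $|q_k - T_k| = O(1)$, uniformly in $k$ and $N$. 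This is $C'$.

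The main obstacle is the first bullet: one must set up the Gronwall argument so that the coefficient appearing in front of the partial sums is $O(1/N)$ (coming from $|\rho-1|$) rather than $O(1)$, so that $(1+O(1/N))^N$ stays bounded; this is exactly where the $a_k \approx b_k$ hypothesis (isolating the increment $u_k$ rather than $q_k$) and the precise $O(1/N^4)$ control on the residuals $E_{j,k}$ are used. Keeping careful track that the auxiliary double-sum term contributes only $O(1/N)$ after the full summation — rather than destroying the uniformity — is the delicate bookkeeping step.
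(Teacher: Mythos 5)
Your key observation is the right one: because $a_k$ and $b_k$ agree to order $1/N^4$, the expression $a_kq_k-b_kq_{k-1}$ isolates the increment $u_k=q_k-q_{k-1}$ rather than the (possibly $\sim N$-sized) value $q_k$ itself, and this is exactly the cancellation the paper exploits. The algebraic setup has some slips though: the claimed telescoping identity
\[
q_k-\rho q_{k-1}=\sum_{j\le k}\rho^{k-j}u_j
\]
is false (already at $k=2$ it asserts $q_2-\rho = \rho+q_2-1$), and the subsequent formula $q_k-\rho q_{k-1}=1+\sum(a_jq_j-b_jq_{j-1})\rho^{k-1-j}$ has extraneous powers of $\rho$: iterating $v_{k+1}=v_k+(a_kq_k-b_kq_{k-1})$ with $v_1=1$ gives $q_k-\rho q_{k-1}=1+\sum_{j=1}^{k-1}(a_jq_j-b_jq_{j-1})$ with no $\rho$ factors; the $\rho^{k-1-j}$ weights belong instead to the expansion of $u_k=q_k-q_{k-1}$, which satisfies $u_{k+1}=\rho u_k+(a_kq_k-b_kq_{k-1})$.

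The substantive gap, however, is the Gronwall step. You define $S_k=\sum_{j\le k}|u_j|$ and arrive at $S_{k+1}\le(1+C'/N)S_k+O(1)+\dots$, then assert this gives $S_k\le\tilde C$ uniformly. It does not: the additive $O(1)$ enters at every one of the $\sim N$ steps, so discrete Gronwall yields only $S_k=O(N)$. And indeed $S_k\le O(1)$ is impossible on its face, since each genuine increment satisfies $|u_j|\approx1$ (in the unperturbed case $u_j=\rho^{j-1}$ exactly), so $S_k$ must grow linearly in $k$. The deeper issue is that replacing $|q_k|$ by $S_k$ throws away the cancellation you need: $|q_k|$ should be compared to $|T_k|\le N/\pi$ (an $N$-bound that is \emph{independent} of $\tilde C$), whereas $S_k$ can only be bounded by $k\tilde C\approx N\tilde C$. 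Since your error estimate contains the term $|\rho-1|\cdot|q_k|$, and $|\rho-1|\approx 2\pi/N$, using $|q_k|\le S_k\le N\tilde C$ produces a contribution $\approx 2\pi\tilde C$ to $|u_{k+1}|$, which can never be made $<\tilde C$ — the induction refuses to close. The paper avoids this by running a \emph{coupled} induction: it proves simultaneously $|q_k|\le 2N$ and $|u_k|\le\tilde C$, obtaining the $2N$-bound on $|q_k|$ not from $S_k$ but from Lemma~\ref{lem:relationship} (which writes $q_k=T_k+\sum(a_jq_j-b_jq_{j-1})T_{k-j}$, giving $|q_k|\le N+O(1)$). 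With $|q_k|\le 2N$ in hand, the dangerous term $|\rho-1||q_k|$ becomes an absolute constant, and the bound on $|u_{k+1}|$ closes. Your second bullet would then follow essentially as you wrote it, but it rests on the first bullet, which is the part that needs the coupled induction rather than a Gronwall estimate on $S_k$.
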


\begin{proof}
    Let $\tilde{C}>2$ satisfy $\frac{\max_\ell |c_\ell|}{N}\cdot \tilde{C} + \frac{4C}{N}<\tilde{C}$ and work using induction. 
    First, note that $|a_j|<\epsilon$ for some $0<\epsilon<1$.
    For the base case, observe that $q_0= 0$, $q_1=1$, and $q_2=1+\rho+a_1$ from which it follows $|q_0|=0\leq 2N$, $|q_1|=1\leq 2N$, and $|q_2|\leq 1+\epsilon < 2N$.
    We also have 
    \begin{align*}
        |q_1-q_0| &= 1 < \tilde{C}, \\
        |q_2-q_1| &= | \rho+a_1| \leq 1 + \epsilon < \tilde{C}.
    \end{align*}

    Step 1: $|q_j|\leq 2N$ for all $1\leq j\leq k$ implies $|q_{k+1}-q_k|\leq \tilde{C}$.
    Now suppose $|q_j|\leq 2N$ and $|q_j-q_{j-1}|\leq \tilde{C}$ for all $1\leq j\leq k$.
    Then using Lemma \ref{lem:relationship} and induction,
    \begin{align*}
        |q_{k+1}-q_k| 
        &= |(q_{k+1}-T_{k+1}) + (T_{k+1}-T_k) + (T_k - q_k)| \\
        &= \left| \sum_{j=1}^k [a_jq_j-b_jq_{j-1}]T_{k+1-j} + \rho^k - \sum_{j=1}^{k-1} [a_jq_j-b_jq_{j-1}]T_{k-j} \right| \\
        &= \left| \sum_{j=1}^k[a_jq_j-b_jq_{j-1}] \rho^{k-j} + \rho^k \right| \\
        &= \left| \sum_{j=1}^k \lb \frac{c_j}{N^2} (q_j-q_{j-1}) \rb\rho^{k-j} + \sum_{j=1}^k [E_1q_j - E_2 q_{j-1}] +\rho^k \right| \\
        &\leq N \cdot \frac{\max_\ell|c_\ell|}{N^2}\cdot \tilde{C} + \sum_{j=1}^k\lp \frac{C}{N^4}\cdot |q_j| + \frac{C}{N^4}\cdot |q_{j-1}| \rp \\
        &\leq \frac{\max_\ell |c_\ell|}{N}\cdot \tilde{C} + \frac{4C}{N^2} \\
        &< \tilde{C}.
    \end{align*}

    Step 2: 
    Under the same assumptions, $|q_j|\leq 2N$ and $|q_j-q_{j-1}|\leq \tilde{C}$ for all $1\leq j\leq k$, and Lemma \ref{lem:relationship},
    \begin{align*}
        |q_{k+1}| &= \left|T_{k+1} + \sum_{j=1}^k(a_jq_j-b_jq_{j-1}) T_{k+1-j} \right| \\
        &= \left| T_{k+1} + \sum_{j=1}^k \lb\frac{ c_j}{N^2}(q_j-q_{j-1}) + E_1q_j - E_2 q_{j-1} \rb T_{k+1-j} \right| \\
        &\leq N + N\cdot \lb \frac{  \max_\ell|c_k|}{N^2}\cdot\tilde{C} + \frac{2C}{N^2} \rb\cdot N \\
        &= N + \max_\ell|c_\ell| \tilde{C} + 2C \\
        &< 2N,
    \end{align*}
    for sufficiently large $N$.

    Thus we conclude $|q_j|\leq 2N$ and $|q_j-q_{j-1}|\leq \tilde{C}$ for all $1\leq j\leq k+1$.
    By induction, $|q_j|\leq 2N$ and $|q_j-q_{j-1}|\leq \tilde{C}$ for all $1\leq j\leq N+1$.
\end{proof}

\subsection{Cubic errors}

We use now the estimates above when the errors are cubic, i.e. $a_{k},b_{k} = O\lp\frac{1}{N^{3}}\rp$.

\begin{prop}\label{prop:cubic}
Let $N\in\mathbb{N}$ and set $\rho = e^{2\pi i/N}$.  
Consider the recurrence in \eqref{eq:qk formula}:
	\begin{align*}
		q_{k+1} &= (1+\rho+a_k) q_{k}-(\rho+b_k) q_{k-1}, & q_0&=0, & q_1&=1, &
	\end{align*}
	where for each $1\leq k\leq N+1$: 
    \begin{align*}
        a_k =\frac{c_k}{N^3}+O\lp\frac{1}{N^4}\rp, \qquad b_k = \frac{c_k}{N^3}+O\lp\frac{1}{N^4}\rp
    \end{align*}
    with $c_k\in\mathbb{C}$ and $|c_k|<C$ for some constant $C>0$ independent of $N$ and $k$, and the $O(\tfrac{1}{N^4})$ bounds uniform in $k$ and $N$.
    Then there exists a constant $C'>0$, independent of $N$, such that
	\begin{equation*}
		|q_N| \leq \frac{C'}{N} \quad\text{and}\quad |q_{N+1}-1|\leq \frac{C'}{N}.
	\end{equation*}
\end{prop}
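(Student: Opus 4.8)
The plan is to run the same inductive bootstrap as in Lemma~\ref{lem:q_kdifferencebound}, but now extract the extra factor $1/N$ coming from the cubic scaling of $a_k$ and $b_k$. First I would invoke Lemma~\ref{lem:q_kdifferencebound} itself: since $a_k,b_k=O(1/N^3)$ certainly fit the hypotheses there (with the quadratic coefficient $c_k/N^2$ taken to be $0$ and everything absorbed into the ``error'' terms, which are now $O(1/N^3)$ rather than $O(1/N^4)$), we already obtain the crude bounds $|q_k|\le 2N$ and $|q_k-q_{k-1}|\le\tilde C$ uniformly for $1\le k\le N+1$. This is the input we need to estimate the sums appearing in Lemma~\ref{lem:relationship}.

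Next I would apply Lemma~\ref{lem:relationship} directly at $k=N$ and $k=N+1$. Writing
\[
q_N - T_N = \sum_{j=1}^{N-1}\bigl[a_j q_j - b_j q_{j-1}\bigr]T_{N-j},
\]
and recalling $T_N=0$, $T_{N+1}=1$, so $|q_N|=|q_N-T_N|$ and $|q_{N+1}-1|=|q_{N+1}-T_{N+1}|$. The key algebraic move, exactly as in Step~1 of the previous lemma, is to split
\[
a_j q_j - b_j q_{j-1} = \frac{c_j}{N^3}(q_j - q_{j-1}) + \Bigl(a_j - \tfrac{c_j}{N^3}\Bigr)q_j - \Bigl(b_j - \tfrac{c_j}{N^3}\Bigr)q_{j-1},
\]
so that the ``main'' part is controlled by $|q_j-q_{j-1}|\le\tilde C$ and the $c_j/N^3$ prefactor, while the remainder is $O(1/N^4)\cdot|q_j|=O(1/N^4)\cdot 2N = O(1/N^3)$. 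Using $|T_{N-j}|\le N/\pi$ (or simply $|T_k|\le CN$ from \eqref{eq: trigT_k}) and summing over the at most $N$ indices $j$, the main part contributes $N\cdot\frac{C}{N^3}\cdot\tilde C\cdot\frac{N}{\pi} = O(1/N)$, and the remainder contributes $N\cdot O(1/N^3)\cdot O(N) = O(1/N)$. This gives $|q_N|\le C'/N$; the computation for $q_{N+1}-1$ is identical, using $T_{N+1}=1$ to cancel the leading term and the same decomposition of the sum.

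The one point that deserves care — and is the main (mild) obstacle — is the bound on $|T_{N-j}|$ inside the sum: naively each term is as large as $N/\pi$, so if there were $N$ terms all at that size one would only get $O(1/N^3)\cdot N\cdot N = O(1/N)$, which is in fact exactly what we want, so there is no real loss; but one must make sure the constant $C'$ is genuinely independent of $N$, which requires that $\tilde C$ and the implied constants in the $O(1/N^4)$ hypotheses are themselves $N$-independent — both of which are guaranteed by the statement. A cleaner alternative, if one wants to avoid even the appearance of borderline summation, is to first establish the \emph{sharper} intermediate bound $|q_k|\le C''$ (a genuine $O(1)$ bound, not $O(N)$) for cubic errors, by rerunning the Lemma~\ref{lem:q_kdifferencebound} induction with the improved scaling; then the sum in Lemma~\ref{lem:relationship} is $\sum_j O(1/N^3)\cdot O(1)\cdot O(N) = O(1/N)$ with obvious room to spare. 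Either route closes the argument; I would present the second, since the uniform $O(1)$ bound on $q_k$ is independently useful and makes the final estimate transparent.
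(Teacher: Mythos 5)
Your proposal is correct and follows essentially the same route as the paper: invoke Lemma~\ref{lem:q_kdifferencebound} for the a priori bounds $|q_k-q_{k-1}|\le\tilde C$ and $|q_k|\le 2N$, then apply Lemma~\ref{lem:relationship} at $k=N,N+1$ with the decomposition $a_jq_j-b_jq_{j-1}=\tfrac{c_j}{N^3}(q_j-q_{j-1})+O(1/N^4)q_j-O(1/N^4)q_{j-1}$ and the bound $|T_{N-j}|\le N$. One small point: as stated, Lemma~\ref{lem:q_kdifferencebound} requires the error terms $E_{j,k}$ to be $O(1/N^4)$, so rather than absorbing $c_k/N^3$ into errors that are then only $O(1/N^3)$, the clean invocation is to take the quadratic coefficient there to be $c_k/N$ (still uniformly bounded), which is what the paper implicitly does.
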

\begin{proof}
	Using Lemma \ref{lem:relationship} we obtain
	\begin{align}
		q_k-T_k &= \sum_{j=1}^{k-1} [a_jq_j - b_jq_{j-1}] T_{k-j} \nonumber \\
		&= \sum_{j=1}^{k-1} \left[ \lp\frac{c_j}{N^3}+ {O\lp\frac{1}{N^4}\rp} \rp q_j - \lp \frac{c_j}{N^3}+ {O\lp\frac{1}{N^4}\rp} \rp q_{j-1} \right] T_{k-j} \nonumber \\
		&= \sum_{j=1}^{k-1} \frac{c_j}{N^3}(q_j-q_{j-1}) T_{k-j} + \sum_{j=1}^{k-1}\lb {O\lp \frac{1}{N^4}\rp} q_j- O\lp\frac{1}{N^4}\rp q_{j-1}\rb T_{k-j}.\label{eq:qk-tk summands}
	\end{align}
    Using Lemma \ref{lem:q_kdifferencebound}, $|q_j-q_{j-1}|\leq \tilde{C}$ for some $\tilde{C}>0$ and $|q_j|\leq 2N$ for all $1\leq j\leq N+1$. Therefore for $k\leq N+1$ equation \eqref{eq:qk-tk summands} becomes 
	\begin{align*}
		|q_k-T_k| &= \left|\sum_{j=1}^{k-1}\frac{c_j}{N^3}(q_j-q_{j-1})T_{k-j} +  \sum_{j=1}^{k-1}\lb {O\lp \frac{1}{N^4}\rp} q_j- O\lp\frac{1}{N^4}\rp q_{j-1}\rb T_{k-j} \right| \\
		&\leq  \sum_{j=1}^{k-1}\left|\frac{c_j}{N^3}\right||q_j-q_{j-1}| |T_{k-j} |+ \sum_{j=1}^{k-1} \lb O\lp\frac{1}{N^4}\rp |q_j| + O\lp\frac{1}{N^4}\rp |q_{j-1}|\rb |T_{k-j}|\\
		&\leq  \sum_{j=1}^{k-1}  \frac{|c_j|}{N^3} \tilde{C} N + \sum_{j=1}^{k-1} O\lp\frac{1}{N^3}\rp  N \\
		&\leq O\lp\frac{1}{N}\rp.
	\end{align*}
	
	Since $T_N=0$ and $T_{N+1}=1$ we know
	\begin{align*}
		|q_N| = |q_N-T_N| =O\lp\frac{1}{N}\rp \qquad\textrm{and}\qquad
        |q_{N+1}-1| = |q_{N+1}-T_{N+1}| = O\lp \frac{1}{N}\rp.
        \end{align*}
\end{proof}

\subsection{Explicit condition for convergence}

If we want the error to be of quadratic order, we have to impose some conditions on it. We have the following:

\begin{prop}\label{pro:summationconstant}
	Let $N\in\mathbb{N}$ and set $\rho = e^{2\pi i/N}$.  
Consider the recurrence in \eqref{eq:qk formula}:
	\begin{align*}
		q_{k+1} &= (1+\rho+a_k) q_{k}-(\rho+b_k) q_{k-1}, & q_0 &=0, & q_1&=1, &
	\end{align*}
where for each $1\leq k\leq N+1$: 
\begin{align*}
{a}_k = \frac{c_k}{N^2}+O\lp\frac{1}{N^4}\rp,\ {b}_k = \frac{c_k}{N^2}+O\lp\frac{1}{N^4}\rp,    
\end{align*}
where all $c_k\subset\mathbb{C}$ are uniformly bounded in $N$, and the $O(\tfrac{1}{N^4})$ terms uniform in $k$ and $N$.  
If, in addition, there exists a constant $C>0$, independent of $N$, such that
\begin{equation}\label{eq: ak/N^2 sum requirement}
    \left|\sum_{k=1}^{N-1} a_kT_k \right| \leq \frac{C}{N}
\end{equation}
then there exists $C'>0$, independent of $N$, such that
\begin{equation*}
		|q_N| \leq \frac{C'}{N} \quad\text{and}\quad |q_{N+1}-1|\leq \frac{C'}{N}.
	\end{equation*}
\end{prop}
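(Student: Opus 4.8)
The plan is to expand $q_N$ and $q_{N+1}$ using Lemma~\ref{lem:relationship} at $k=N$ and $k=N+1$, exploiting $T_N=0$ and $T_{N+1}=1$, and then to collapse the resulting sums onto the quantity $\sum_{k=1}^{N-1}a_kT_k$ appearing in hypothesis~\eqref{eq: ak/N^2 sum requirement}. Explicitly, $q_N=q_N-T_N=\sum_{j=1}^{N-1}[a_jq_j-b_jq_{j-1}]T_{N-j}$ and $q_{N+1}-1=q_{N+1}-T_{N+1}=\sum_{j=1}^{N}[a_jq_j-b_jq_{j-1}]T_{N+1-j}$. The hypotheses assumed here are precisely those of Lemma~\ref{lem:q_kdifferencebound}, so throughout we may use $|q_j-q_{j-1}|\le\tilde C$ and $|q_j|\le 2N$ for $1\le j\le N+1$, together with the crude bound $|T_j|\le 1/\sin(\pi/N)=O(N)$.

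There are two reductions. First, write $a_jq_j-b_jq_{j-1}=a_j(q_j-q_{j-1})+(a_j-b_j)q_{j-1}$; since $a_j-b_j=O(1/N^4)$ and $|q_{j-1}|\le 2N$, the second piece contributes $O(1/N^3)$ per summand once the factor $|T_{\,\cdot-j}|=O(N)$ is included, hence $O(1/N)$ to each sum, leaving $\sum_j a_j(q_j-q_{j-1})T_{\,\cdot-j}$. Second, the exact identity from Step~1 of the proof of Lemma~\ref{lem:q_kdifferencebound} reads $q_j-q_{j-1}-\rho^{j-1}=\sum_{i=1}^{j-1}[a_iq_i-b_iq_{i-1}]\rho^{j-1-i}$, and the bound just obtained gives $|a_iq_i-b_iq_{i-1}|=O(1/N^2)$, so $q_j-q_{j-1}=\rho^{j-1}+R_j$ with $|R_j|=O(1/N)$ uniformly in $j$. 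The $R_j$-terms again contribute $O(1/N)$ to each sum, so $q_N=\sum_{j=1}^{N-1}a_j\rho^{j-1}T_{N-j}+O(1/N)$ and $q_{N+1}-1=\sum_{j=1}^{N}a_j\rho^{j-1}T_{N+1-j}+O(1/N)$.

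Now the arithmetic of $T_k=(1-\rho^k)/(1-\rho)$ with $\rho^N=1$ collapses these sums. The telescoping identity $T_j+\rho^jT_{N-j}=T_N=0$ gives $\rho^{j-1}T_{N-j}=-\rho^{-1}T_j$, and likewise $\rho^{j-1}T_{N+1-j}=(\rho^{j-1}-1)/(1-\rho)=-T_{j-1}$. Substituting, $q_N=-\rho^{-1}\sum_{j=1}^{N-1}a_jT_j+O(1/N)$, and hypothesis~\eqref{eq: ak/N^2 sum requirement} yields $|q_N|=O(1/N)$ at once. For the second estimate, $q_{N+1}-1=-\sum_{j=1}^{N}a_jT_{j-1}+O(1/N)=-\sum_{m=1}^{N-1}a_{m+1}T_m+O(1/N)$ after reindexing ($T_0=0$ removes the boundary term). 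Writing $a_{m+1}T_m=a_mT_m+(a_{m+1}-a_m)T_m$, the first part is $O(1/N)$ by hypothesis; for the second use $a_{m+1}-a_m=\tfrac{c_{m+1}-c_m}{N^2}+O(1/N^4)$ and summation by parts: since $T_{m-1}-T_m=-\rho^{m-1}$ has modulus one, $\bigl|\sum_{m}(c_{m+1}-c_m)T_m\bigr|=O(N)$, which after the $1/N^2$ factor is $O(1/N)$, while $O(1/N^4)\sum_m|T_m|=O(1/N^2)$. Hence $|q_{N+1}-1|=O(1/N)$ as well.

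I expect the last step to be the only genuine obstacle: passing from the shifted sum $\sum_m a_{m+1}T_m$ back to $\sum_m a_mT_m$ forces an Abel summation, and it is here that one must use boundedness of the $c_k$ (their consecutive differences need not be small). A route that avoids the shift is to feed $|q_N|=O(1/N)$ into the recurrence $q_{N+1}=(1+\rho+a_N)q_N-(\rho+b_N)q_{N-1}$, reducing matters to $q_{N-1}=T_{N-1}+O(1/N)=-\rho^{-1}+O(1/N)$; this last estimate is proved by repeating the steps above at index $N-1$, where the one new input is $\sum_{j=1}^{N-2}a_j=O(1/N)$, immediate from $|a_j|=O(1/N^2)$. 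Either route completes the proof.
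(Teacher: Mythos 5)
Your proof is correct, and for the estimate on $|q_N|$ it follows the same strategy as the paper: apply Lemma~\ref{lem:relationship}, peel off the $(a_j-b_j)q_{j-1}$ term as an $O(1/N)$ error, approximate $q_j-q_{j-1}$ by $\rho^{j-1}$, and collapse the resulting sum onto $\sum_j a_jT_j$. Two points where you diverge are worth recording. First, your derivation of $q_j-q_{j-1}=\rho^{j-1}+O(1/N)$ via the identity from Step~1 of Lemma~\ref{lem:q_kdifferencebound} is cleaner than the paper's route, which first treats $a_k=b_k$ by writing $q_j-q_{j-1}$ as the product $\prod(\rho+a_k)$ and expanding logarithms, and then patches in the remainder $R_{j-1}$ for the general case; likewise your algebraic identities $\rho^{j-1}T_{N-j}=-\rho^{-1}T_j$ and $\rho^{j-1}T_{N+1-j}=-T_{j-1}$ replace the paper's trigonometric manipulations. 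Second, and more substantively, the paper disposes of $|q_{N+1}-1|$ with the sentence ``an analogous argument can be made,'' whereas you actually carry it out and correctly observe that it is not literally analogous: the collapse produces the shifted sum $\sum_m a_{m+1}T_m$, and returning to the hypothesis $\bigl|\sum_m a_mT_m\bigr|\le C/N$ genuinely requires an Abel summation exploiting the uniform boundedness of the $c_k$ together with $|T_{m}-T_{m-1}|=1$ (your alternative, feeding $|q_N|=O(1/N)$ and $q_{N-1}=-\rho^{-1}+O(1/N)$ back into the recurrence, also works, though the quantity you need there is again $\sum_{j\le N-2}a_jT_j$ rather than $\sum_j a_j$). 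This fills a real gap in the paper's exposition.
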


\begin{proof}
	Suppose ${a}_k = \frac{c_k}{N^2} +O\lp \frac{1}{N^4}\rp$ and ${b}_k = \frac{c_k}{N^2} + O\lp \frac{1}{N^4}\rp$, so $b_k = a_k + O(1/N^4)$. 
	Then observe that Lemma \ref{lem:relationship} gives
	\begin{align*}
		q_k-T_k &= \sum_{j=1}^{k-1} [{a}_jq_j - {b}_jq_{j-1}] T_{k-j} \\
		&= \sum_{j=1}^{k-1} a_j(q_j-q_{j-1}) T_{k-j} + \sum_{j=1}^{k-1} (a_j-b_j)q_{j-1} T_{k-j}.
	\end{align*}
	Taking the second summand in modulus, we have
	\begin{equation*}
		\left|\sum_{j=1}^{k-1} (a_j-b_j)q_{j-1} T_{k-j}\right|=
        \left|\sum_{j=1}^{k-1} O(1/N^4)q_{j-1} T_{k-j} \right| \leq O(1/N). 
	\end{equation*}
	Hence, in what follows, we work exclusively with the first summation $\sum_{j=1}^{k-1} a_j(q_j-q_{j-1})T_{k-j}$.
	
	Let $k=N$ and observe that \eqref{eq: trigT_k} gives
    \begin{align*}
    T_{N-j}&=e^{i\pi(N-j-1)/N}\frac{\sin((N-j)\pi/N)}{\sin(\pi/N)}\\
    &=-e^{-i\pi(j+1)/N}\frac{\sin(j\pi/N)}{\sin(\pi/N)}.
    \end{align*}
	We also have 
	\begin{align*}
		q_j-q_{j-1} 
		&= (\rho+{a}_{j-1})q_{j-1} -(\rho+{b}_{j-1})q_{j-2} 
		= (\rho+{a}_{j-1})(q_{j-1}-q_{j-2}) + O\lp \frac{1}{N^4}\rp q_{j-2}.
\end{align*}
We first prove the estimate under the simplifying assumption that $a_k=b_k$. We then show that $b_k-a_k=O(1/N^4)$ will contribute a sufficient small order term. Since $a_k=O(1/N^2)$ we may choose a branch of logarithm for $\rho+a_k$. Then:
        \begin{align*}
         		q_j-q_{j-1} 
	 	&= \prod_{k=1}^{j-1} (\rho+{a}_{k}) 
		= \prod_{k=1}^{j-1} e^{(2\pi i/N + c_k/N^2 + O(1/N^4))}\\
	 	&= \exp\left\{\sum_{k=1}^{j-1} \frac{2\pi i}{N} + \frac{c_k}{N^2} + O\lp\frac{1}{N^4}\rp \right\} \\
		&= \exp\left\{ \frac{2\pi i(j-1)}{N} + \frac{1}{N^2}\sum_{k=1}^{j-1} c_k + O\lp\frac{1}{N^3}\rp \right\}.
	 \end{align*}

    Then
    \begin{align*}
        \left|\sum_{j=1}^{N-1} a_j (q_j-q_{j-1})T_{N-j} \right|
        &= \left| \sum_{j=1}^{N-1} a_j e^{2\pi i(j-1)/N + \sum_{k=1}^{j-1}c_k/N^2 + O(1/N^3)} e^{-i\pi(j+1)/N}\frac{\sin(\pi j/N)}{\sin(\pi/N)} \right| \\
        &= \left| \sum_{j=1}^{N-1} a_j e^{\pi i(j-1)/N}\frac{\sin(\pi j/N)}{\sin(\pi/N)} e^{\sum_{k=1}^{j-1}c_k/N^2+O(1/N^3)}\right| \\
        &\leq \left| \sum_{j=1}^{N-1} a_j T_j \lp 1 + \sum_{k=1}^{j-1} \frac{c_k}{N^2} + O\lp\frac{1}{N^3}\rp \rp \right| \\
        &\leq \left|\sum_{j=1}^{N-1}a_j T_j \right| + O\lp\frac{1}{N}\rp.
    \end{align*}
     Consequently, 
     \begin{equation*}
         |q_N| = |q_N-T_N| \leq \left|\sum_{j=1}^{N-1}a_j T_j \right| + O\lp\frac{1}{N}\rp,
     \end{equation*}
    and $|q_N|\leq \frac{C'}{N}$ if $\left|\sum_{j=1}^{N-1}a_j T_j \right| \leq \frac{C}{N}$.

    An analogous argument can be made to show:
    \[
    |q_{N+1}-1|=|q_{N+1}-T_{N+1}|\leq \frac{C'}{N}.\]

Let us work on the general case now where $b_k-a_k=O(1/N^4)$. 
Then $q_j-q_{j-1}=(\rho+a_{j-1})(q_{j-1}-q_{j-2})+(b_{j-1}-a_{j-1})q_{j-2}$. 
Let us write for simplicity $r_{j-1}:=(b_{j-1}-a_{j-1})q_{j-2}$.
From Lemma \ref{lem:q_kdifferencebound} and the conditions on $a_j$ and $b_j$, we have that each $q_j$ is of order at most $N$ and $b_{j-1}-a_{j-1}=O(1/N^4)$. Then each term $|r_j|<C/N^3$ where $C$ is uniform. 
Hence,
\begin{align*}
    q_j-q_{j-1} = \prod_{k=1}^{j-1} (\rho+{a}_{k}) + R_{j-1}
\end{align*}
where $R_{j-1}:= r_{j-1}+r_{j-2}(\rho+a_{j-1})+r_{j-3}(\rho+a_{j-2})(\rho+a_{j-1})+ \cdots+ r_{1}(\rho+a_2)(\rho+a_{3})\cdots (\rho+a_{j-1})$.
Because $|\rho+a_{j-1}|=1+O(1/N^2)$, any product of less than $N$ terms of this form is of order $O(1)$. 
Therefore, for each $1\leq i < j \leq N-1$ we obtain:
\begin{align*}
    |r_{j-i}\prod_{k=1}^{i-1}(\rho+a_{j-k})|<C/N^3,
\end{align*}
which implies that $|R_{j-1}|<C'/N^2$.

Then
    \begin{align*}
        \left|\sum_{j=1}^{N-1} a_j (q_j-q_{j-1})T_{N-j} \right|\leq \left|\sum_{j=1}^{N-1}a_jT_j\right|+\left|\sum_{j=1}^{N-1}a_jR_{j-1}T_{N-j} \right|+O\lp\frac{1}{N}\rp.
\end{align*}
We can check that
\begin{align*}
\left|\sum_{j=1}^{N-1}a_jR_{j-1}T_{N-j} \right| < N\frac{C}{N^2}\frac{C'}{N^2}N = O(1/N^2).
\end{align*}
\end{proof}

In the following Corollaries, we provide examples of sequences $(c_k)$ that satisfy the summation condition \eqref{eq: ak/N^2 sum requirement}. 

\begin{cor}\label{consecutive}
	Let $(c_k)\subset\mathbb{C}$ be a bounded sequence such that $c_k+c_{k+1}=O\lp\frac{1}{N}\rp$ for $k$ odd. Then the coefficients $a_k=\frac{c_k}{N^2}$
satisfy condition \eqref{eq: ak/N^2 sum requirement}.
\end{cor}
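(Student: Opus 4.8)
The plan is to bound the sum $\sum_{k=1}^{N-1} c_k T_k$ directly; since $a_k = c_k/N^2$, the estimate $\bigl|\sum_{k=1}^{N-1} c_k T_k\bigr| = O(N)$ is exactly condition \eqref{eq: ak/N^2 sum requirement}. Two elementary facts about the $T_k$ will be used. First, from $T_k = (1-\rho^k)/(1-\rho)$ one gets the telescoping identity $T_{k+1}-T_k = \rho^k$. Second, since $|1-\rho| = 2\sin(\pi/N)$ and $\sin(\pi/N)\ge 2/N$, we have the uniform bound $|T_k| = O(N)$ already recorded after \eqref{eq: trigT_k}; we also have the exact value $|T_{N-1}| = |1-\bar\rho|/|1-\rho| = 1$.

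I would group the sum into consecutive pairs indexed by odd $k$. For each odd $k$,
\[
c_k T_k + c_{k+1}T_{k+1} = c_k\,(T_k - T_{k+1}) + (c_k+c_{k+1})\,T_{k+1} = -c_k\rho^k + d_k T_{k+1},
\]
where $d_k := c_k + c_{k+1}$ satisfies $|d_k| = O(1/N)$ by hypothesis. Since $|\rho^k|=1$ and the $c_k$ are uniformly bounded, the term $-c_k\rho^k$ has modulus $O(1)$; since $|T_{k+1}| = O(N)$ and $|d_k| = O(1/N)$, the term $d_k T_{k+1}$ also has modulus $O(1)$. There are fewer than $N/2$ such pairs, so their combined contribution to $\sum_{k=1}^{N-1} c_k T_k$ has modulus $O(N)$. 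When $N$ is even the index $k = N-1$ is left unpaired, but $|T_{N-1}| = 1$, so this leftover term $c_{N-1}T_{N-1}$ is $O(1)$ and harmless; when $N$ is odd every term is paired.

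Hence $\bigl|\sum_{k=1}^{N-1} c_k T_k\bigr| \le C'N$ for a constant $C'$ independent of $N$, and therefore
\[
\Bigl|\sum_{k=1}^{N-1} a_k T_k\Bigr| = \frac{1}{N^2}\,\Bigl|\sum_{k=1}^{N-1} c_k T_k\Bigr| \le \frac{C'}{N},
\]
which is \eqref{eq: ak/N^2 sum requirement}. There is no serious obstacle here; the only substantive point is that the pairing is precisely what produces the needed gain — a naive triangle inequality gives only $\sum_k |c_k|\,|T_k| = O(N^2)$ (because $\sum_k |T_k| \asymp N^2$), which is too weak, whereas replacing each pair $c_kT_k + c_{k+1}T_{k+1}$ by $-c_k\rho^k + d_kT_{k+1}$ trades an $O(N)$-sized quantity for two $O(1)$-sized ones. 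The remaining care is purely bookkeeping: tracking the parity of $N$ to account for the possible unpaired boundary term.
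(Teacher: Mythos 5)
Your proof is correct and follows essentially the same route as the paper: group the sum into consecutive pairs indexed by odd $k$ and use the telescoping identity $T_{k+1}-T_k=\rho^k$ together with the hypothesis $c_k+c_{k+1}=O(1/N)$ to trade each $O(N)$-sized pair for two $O(1)$-sized terms. The paper writes the pair as $(c_j+c_{j+1})T_j+c_{j+1}(T_{j+1}-T_j)$ rather than your $(c_k+c_{k+1})T_{k+1}-c_k\rho^k$, but this is the same decomposition up to relabeling, and your extra care with the unpaired boundary term when $N$ is even is a harmless refinement (the paper sidesteps it since $T_N=0$).
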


\begin{proof}
	Let $(c_k)\subset\mathbb{C}$ be bounded such that $c_k+c_{k+1}=O\lp\frac{1}{N}\rp$ for $k$ odd. 
	Expanding the expression \eqref{eq: ak/N^2 sum requirement} 
	\begin{align*}
        \left|\sum_{j=1}^{N-1}a_jT_j\right| 
        &= \frac{1}{N^2}\left|\sum_{j=1}^{N-1}  c_jT_j\right|=\frac{1}{N^2}\left|\sum_{1\leq j\leq N-1, j \textrm { odd }} (c_jT_j+c_{j+1}T_{j+1})\right|\\
		&= \frac{1}{N^2}\left|\sum_{1\leq j\leq N-1, j \textrm { odd }} (c_j+c_{j+1})T_j+c_{j+1}(T_{j+1}-T_j)\right|\\
        &\leq \frac{1}{N^2}\left(\sum_{1\leq j\leq N-1, j \textrm { odd }} |c_j+c_{j+1}||T_j|+\sum_{j=1}^{N-1}|c_{j+1}||T_{j+1}-T_j|\right)\\
        &\leq \frac{1}{N^2}\left(N\frac{C}{N}N+NC\right)=\frac{2C}{N}.
	\end{align*}
\end{proof}

Similarly we have:

\begin{cor}\label{expsums}
	Let $(c_k)\subset\mathbb{C}$ such that $c_k = Ce^{2\pi ik/N}$ for $C\in\mathbb{C}$. 
	Then the coefficients $a_k=\frac{c_k}{N^2}$ satisfy condition \eqref{eq: ak/N^2 sum requirement}.
\end{cor}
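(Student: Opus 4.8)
The plan is to bypass any pairing-and-cancellation argument and instead use the closed form $T_k=\frac{1-\rho^k}{1-\rho}$ with $\rho=e^{2\pi i/N}$: since $c_k=Ce^{2\pi ik/N}=C\rho^k$ is exactly resonant with the geometric structure of $T_k$, the entire sum $\sum_{k=1}^{N-1}a_kT_k$ should collapse to an explicitly computable quantity, in fact to $0$.

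Concretely, I would first substitute the closed form and rearrange:
\[
\sum_{k=1}^{N-1}a_kT_k=\frac{C}{N^2}\sum_{k=1}^{N-1}\rho^k\cdot\frac{1-\rho^k}{1-\rho}=\frac{C}{N^2(1-\rho)}\left(\sum_{k=1}^{N-1}\rho^k-\sum_{k=1}^{N-1}\rho^{2k}\right).
\]
Then I would evaluate the two geometric sums using $\rho^N=1$. From $\sum_{k=0}^{N-1}\rho^k=0$ we get $\sum_{k=1}^{N-1}\rho^k=-1$. For the second sum, observe $(\rho^2)^N=1$ while $\rho^2\neq 1$ for $N\geq 3$ (when $N$ is even, $\rho^2$ merely has order $N/2$, so the index range $k=0,\dots,N-1$ still covers an integer number of full periods); hence $\sum_{k=0}^{N-1}\rho^{2k}=0$ and so $\sum_{k=1}^{N-1}\rho^{2k}=-1$ as well. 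The bracketed difference is therefore exactly $0$, giving $\sum_{k=1}^{N-1}a_kT_k=0$, which certainly satisfies \eqref{eq: ak/N^2 sum requirement}.

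There is no real obstacle here; the only point requiring a moment's attention is checking that $\rho^2\neq 1$ (so the second geometric series sums to zero) uniformly in the parity of $N$, which is immediate once one notes it holds for every $N\geq 3$. Since the conclusion is only needed as $N\to\infty$, this suffices, and in fact the left-hand side of \eqref{eq: ak/N^2 sum requirement} is identically zero.
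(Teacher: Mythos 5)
Your proof is correct and takes essentially the same route as the paper: both substitute the closed form $T_k=\frac{\rho^k-1}{\rho-1}$, split $\sum_{k=1}^{N-1}a_kT_k$ into two geometric sums in $\rho^k$ and $\rho^{2k}$, and conclude the total is exactly $0$. The only difference is that you make explicit the evaluation of each geometric sum (and the check that $\rho^2\neq1$), whereas the paper simply asserts that the two sums cancel.
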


\begin{proof}
	We can rewrite $c_k = Ce^{2\pi ik/N}$ as $c_k=C\rho^k$, since $\rho=e^{2\pi i/N}$. Recall that $T_k=\frac{\rho^k-1}{\rho-1}$.
	Then the expression in \eqref{eq: ak/N^2 sum requirement} becomes 
	\begin{align*}
        \sum_{k=1}^{N-1} a_k T_k
		&= \frac{1}{N^2}\sum_{k=1}^{N-1} C\rho^k\frac{\rho^k-1}{\rho-1}\\
        &= \frac{C}{N^2}\left(\sum_{k=1}^{N-1} \frac{\rho^{2k}}{\rho-1}-\sum_{k=1}^{N-1} \frac{\rho^{k}}{\rho-1}\right)=0.
	\end{align*}
\end{proof}

\subsection{Combined rotation and translation}

\begin{prop}\label{prop:qklimits}
	Let 
    \begin{align*}
		q_{k+1} &= (1+\rho+b_k-\epsilon_k^2) q_{k}-(\rho+b_k) q_{k-1}, & q_0 &=0, & q_1 &=1, &
	\end{align*}
    for $k\geq2$.
    If $\epsilon_k$ and $b_k$ satisfy any of the conditions (i)–(iii), where $c_{k}\in\mathbb{C}$ and $c_{k}$ bounded for all $k\in \mathbb{N}$;
then $q_N \to 0$ and $q_{N+1} \to 1$ as $N \to \infty$.
    	\begin{enumerate}
		\item[i)] $\displaystyle{b_k = \frac{c_k}{N^3}+O\lp\frac{1}{N^4}\rp}$ for bounded $c_k\in\mathbb{C}$ and $\displaystyle{\epsilon_k= O\lp\frac{1}{N^2}\rp}$; or
		\item[ii)]  $\displaystyle{b_k = \frac{c_k}{N^2}+O\lp\frac{1}{N^4}\rp}$ such that $c_k\in\mathbb{C}$ is bounded, $\displaystyle{c_k+c_{k+1}=O\lp\frac{1}{N}\rp}$ for $k$ odd and $\displaystyle{\epsilon_k= O\lp\frac{1}{N^2}\rp}$; or
        \item[iii)] $\displaystyle{b_k = \frac{c_{k}}{N^2}+O\lp\frac{1}{N^{4}}\rp}$ where $\displaystyle{c_{k} = Ce^{2\pi i k/N}}$ for $C\in\mathbb{C}$ and $\displaystyle{\epsilon_k= O\lp\frac{1}{N^2}\rp}$. 
	\end{enumerate}
\end{prop}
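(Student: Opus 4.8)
The plan is to deduce the three cases from the propositions already established for the recurrence \eqref{eq:qk formula}, after matching coefficients correctly. Writing the recurrence of the proposition in the form of \eqref{eq:qk formula} amounts to setting $a_k := b_k - \epsilon_k^2$ while keeping $b_k$ as the coefficient of $q_{k-1}$. The first observation is that in all three cases $\epsilon_k = O(1/N^2)$, so $\epsilon_k^2 = O(1/N^4)$; hence $a_k = b_k + O(1/N^4)$, and $a_k$ and $b_k$ share the same leading sequence $c_k$ — at scale $1/N^3$ in case (i) and at scale $1/N^2$ in cases (ii)–(iii) — which is exactly the hypothesis format required by Propositions \ref{prop:cubic} and \ref{pro:summationconstant}.

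For case (i), $a_k = c_k/N^3 + O(1/N^4)$ and $b_k = c_k/N^3 + O(1/N^4)$ with $|c_k|$ uniformly bounded, so Proposition \ref{prop:cubic} applies verbatim and yields a constant $C'$, independent of $N$, with $|q_N| \le C'/N$ and $|q_{N+1}-1| \le C'/N$; letting $N\to\infty$ gives $q_N \to 0$ and $q_{N+1}\to 1$.

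For cases (ii) and (iii), $a_k = c_k/N^2 + O(1/N^4)$ and $b_k = c_k/N^2 + O(1/N^4)$. Here I would first check that the summation hypothesis \eqref{eq: ak/N^2 sum requirement} of Proposition \ref{pro:summationconstant} holds for $a_k$. By Corollary \ref{consecutive} (for (ii), using $c_k + c_{k+1} = O(1/N)$ on odd $k$) or Corollary \ref{expsums} (for (iii), using $c_k = Ce^{2\pi i k/N}$), the sequence $\tilde a_k := c_k/N^2$ satisfies $\left|\sum_{k=1}^{N-1}\tilde a_k T_k\right| \le C/N$; and replacing $\tilde a_k$ by $a_k = \tilde a_k + O(1/N^4)$ changes the sum by at most $O\!\left(N\cdot N^{-4}\cdot N\right) = O(1/N^2)$, since $|T_k| = O(N)$ by \eqref{eq: trigT_k} and there are $O(N)$ terms. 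Hence \eqref{eq: ak/N^2 sum requirement} holds for $a_k$ itself, and Proposition \ref{pro:summationconstant} again gives $|q_N| \le C'/N$ and $|q_{N+1}-1| \le C'/N$, so $q_N \to 0$ and $q_{N+1}\to 1$.

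The argument is essentially bookkeeping, so there is no genuine analytic obstacle beyond making the scales line up; the one point that deserves care is exactly \emph{why} $\epsilon_k$ must be $O(1/N^2)$. If $\epsilon_k$ were of larger order — say $\Theta(1/N)$, as in the purely additive regime near the critical scale $\pi/N$ — then $\epsilon_k^2$ would be $\Theta(1/N^2)$ and would perturb $a_k$ relative to $b_k$ at leading order, so $a_k$ and $b_k$ would no longer be quadratic and comparable in the sense of Lemma \ref{lem:q_kdifferencebound}, and none of Propositions \ref{prop:cubic}–\ref{pro:summationconstant} would apply. That regime requires the separate analysis carried out for the additive cases; here, the smallness $\epsilon_k = O(1/N^2)$ is precisely what confines the additive contribution to the harmless $O(1/N^4)$ error term.
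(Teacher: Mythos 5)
Your proposal is correct and follows essentially the same route as the paper: reduce to the form \eqref{eq:qk formula} via $a_k:=b_k-\epsilon_k^2$, observe that $\epsilon_k^2=O(1/N^4)$ keeps $a_k$ and $b_k$ sharing the same leading sequence $c_k$, then invoke Proposition~\ref{prop:cubic} for case (i) and Proposition~\ref{pro:summationconstant} together with Corollaries~\ref{consecutive} and~\ref{expsums} for cases (ii)--(iii). Your extra check that perturbing $a_k$ by $O(1/N^4)$ only shifts $\sum a_kT_k$ by $O(1/N^2)$ is a detail the paper leaves implicit, and is a welcome addition.
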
	

\begin{proof}

\begin{enumerate}
		\item[i)] Using Proposition \ref{prop:cubic} with $b_k = \frac{c_k}{N^3}+O\lp\frac{1}{N^4}\rp$ and         
        \begin{align*}
            a_{k} := b_{k}- \epsilon_{k}^{2} =\frac{c_k}{N^3}+O\lp\frac{1}{N^4}\rp -  O\lp\frac{1}{N^4}\rp = \frac{c_k}{N^3}+O\lp\frac{1}{N^4}\rp,
        \end{align*}
we have $q_N\rightarrow 0 \quad\text{and}\quad q_{N+1}\rightarrow 1.$
    \item[ii)] and iii) the sequences $b_{k}$ and $a_{k}:=b_k-\epsilon_k^2$ are of the form required by Proposition~\ref{pro:summationconstant}, so we need only check that $c_{k}$ satisfy condition ~\eqref{eq: ak/N^2 sum requirement}. This is shown for (ii) in Corollary~\ref{consecutive} and for (iii) in Corollary~\ref{expsums}.

\end{enumerate}
\end{proof}

\subsection{Different initial conditions}		

We now consider the other recursive polynomial in Lemma \ref{lem:recursivepoly}:
\begin{align*}
	r_{k+1} &= (1+\rho + a_{k})r_k -(\rho+b_k)r_{k-1}, & r_0&=1, & r_1&=1. &
\end{align*}
Note that this recursive polynomial has its starting term, $r_0$, initialized at $1$ instead of $0$, so our previous results will not apply. 
However, we can re-write $r_k$ as a difference of recursive polynomials which allows us to adapt our previous results.  
As before, we write our polynomials in a more general form.
\begin{lemma}\label{lem:r_kdiffeq}
	Consider the sequences 
	\begin{align*}
		q_{k+1} &= (1+\rho+a_k) q_k - (\rho+b_k)q_{k-1}, & q_0&=0, & q_1&=1; & \\
		r_{k+1} &= (1+\rho+a_k) r_k - (\rho+b_k)r_{k-1}, & r_0&=1, & r_1&=1; &
	\end{align*}
    for $k\geq 2$. 
	Then 
	\begin{align*}
		r_k &= q_k - (\rho +b_1)s_{k-1}
	\end{align*}
	where the sequence $s_k$ is given by the conditions $s_0=0$, $s_1 = 1$ and for $k\geq 2$ we have    
	\begin{align*}
		s_{k+1} &= (1+\rho+a_{k+1})s_k - (\rho+b_{k+1})s_{k-1}.
	\end{align*}
\end{lemma}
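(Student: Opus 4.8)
The plan is to reduce the statement about $r_k$ to one about $q_k$ by "absorbing" the nonzero initial condition $r_0=1$ into a correction term built from a shifted copy of the same recursion. First I would observe that both $q_k$ and $r_k$ solve the \emph{same} linear two-term recurrence, so their difference $w_k := r_k - q_k$ satisfies the homogeneous recursion with initial data $w_0 = r_0 - q_0 = 1$ and $w_1 = r_1 - q_1 = 0$. The space of solutions of the recurrence is two-dimensional, and $\{q_k\}$ is one solution with $(q_0,q_1)=(0,1)$; I need a second, linearly independent solution to express $w_k$. The natural candidate is a solution that, at the \emph{first} step, plays the role $q$ plays at step $0$: this is exactly the sequence $s_k$ defined in the statement, which satisfies $s_{k+1} = (1+\rho+a_{k+1})s_k - (\rho+b_{k+1})s_{k-1}$ for $k \geq 2$, with $s_0=0$, $s_1=1$ — i.e. $s_k$ is the analogue of $q_k$ but for the recurrence with coefficient sequences \emph{shifted by one index}.

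The key computational step is to verify, by a direct induction on $k$, that $r_k = q_k - (\rho+b_1)s_{k-1}$. The base cases are immediate: for $k=0$ the right-hand side is $q_0 - (\rho+b_1)s_{-1}$, and with the convention $s_{-1}$ chosen (or the identity checked starting from $k=0,1$ using $s_0=0$, $s_1=1$) one gets $q_0 - 0 = 0$... so more carefully I would check $k=1$: $q_1 - (\rho+b_1)s_0 = 1 - 0 = 1 = r_1$, and $k=2$: $q_2 - (\rho+b_1)s_1 = (1+\rho+a_1) - (\rho+b_1) = 1 + a_1 - b_1$, which should match $r_2 = (1+\rho+a_1)r_1 - (\rho+b_1)r_0 = 1 + a_1 - b_1$. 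Good. For the inductive step, assume the formula holds for $k-1$ and $k$; plug $r_{k-1} = q_{k-1} - (\rho+b_1)s_{k-2}$ and $r_k = q_k - (\rho+b_1)s_{k-1}$ into the recursion $r_{k+1} = (1+\rho+a_k)r_k - (\rho+b_k)r_{k-1}$, and use that $q$ satisfies its recursion with the \emph{same} coefficients $(a_k,b_k)$ while the shifted sequence $\tilde s_j := s_{j-1}$ satisfies $\tilde s_{k+1} = (1+\rho+a_k)\tilde s_k - (\rho+b_k)\tilde s_{k-1}$ precisely because $s$'s recursion is indexed so that $s_{m+1} = (1+\rho+a_{m+1})s_m - (\rho+b_{m+1})s_{m-1}$, i.e. setting $m = k-1$ gives $s_k = (1+\rho+a_k)s_{k-1} - (\rho+b_k)s_{k-2}$. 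Linearity then gives $r_{k+1} = q_{k+1} - (\rho+b_1)s_k$, closing the induction.

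The only genuine subtlety — and the step I expect to need the most care — is bookkeeping the index shift in the definition of $s_k$: one must confirm that $s_k$ satisfies the recursion with coefficients $(a_k, b_k)$ (not $(a_{k-1},b_{k-1})$) when viewed through the substitution $s_{k-1} \mapsto$ the $(k)$-th term of the correction, so that it lines up term-by-term with $q_k$ in the recursion for $r_{k+1}$. Concretely this is why the statement writes $s_{k+1} = (1+\rho+a_{k+1})s_k - (\rho+b_{k+1})s_{k-1}$ rather than with unshifted indices. Once that alignment is pinned down, the proof is a two-line induction; no estimates or analytic input are needed here, since this lemma is purely algebraic and its purpose is to let the earlier bounds on $q_k$ (and on the shifted-coefficient analogue $s_k$, which has the same structural form) be transferred to $r_k$.

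\begin{proof}
Set $w_k := r_k - q_k$. Since $q_k$ and $r_k$ obey the same recursion, $w_k$ satisfies
\[
w_{k+1} = (1+\rho+a_k)w_k - (\rho+b_k)w_{k-1}, \qquad w_0 = 1,\quad w_1 = 0.
\]
We claim $w_k = -(\rho+b_1)s_{k-1}$, equivalently $r_k = q_k - (\rho+b_1)s_{k-1}$, which we prove by induction on $k$. For $k=1$: $q_1 - (\rho+b_1)s_0 = 1 - 0 = 1 = r_1$. For $k=2$: $q_2 - (\rho+b_1)s_1 = (1+\rho+a_1) - (\rho+b_1) = r_2$, using the recursion for $r_2$ with $r_0=r_1=1$.

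Now assume the identity holds for indices $k-1$ and $k$, with $k \geq 2$. Note that the sequence $s_k$ satisfies, for the index value $k-1$ (so $m+1 = k$ in its defining relation), $s_k = (1+\rho+a_k)s_{k-1} - (\rho+b_k)s_{k-2}$. Therefore
\begin{align*}
r_{k+1} &= (1+\rho+a_k)r_k - (\rho+b_k)r_{k-1} \\
&= (1+\rho+a_k)\bigl(q_k - (\rho+b_1)s_{k-1}\bigr) - (\rho+b_k)\bigl(q_{k-1} - (\rho+b_1)s_{k-2}\bigr) \\
&= \bigl[(1+\rho+a_k)q_k - (\rho+b_k)q_{k-1}\bigr] - (\rho+b_1)\bigl[(1+\rho+a_k)s_{k-1} - (\rho+b_k)s_{k-2}\bigr] \\
&= q_{k+1} - (\rho+b_1)s_k.
\end{align*}
This completes the induction, and hence $r_k = q_k - (\rho+b_1)s_{k-1}$ for all $k \geq 0$.
\end{proof}
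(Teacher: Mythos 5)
Your proof is correct and follows essentially the same approach as the paper, which simply states that the result ``follows immediately by writing down $r_k-q_k$ and checking the initial conditions''; your version spells out the induction that this one-liner compresses, and your handling of the index shift in the $s$-recursion is exactly the right bookkeeping. One small point worth noting: your inductive step at $k=2$ uses $s_2=(1+\rho+a_2)s_1-(\rho+b_2)s_0$, which requires the $s$-recursion to start at $k=1$ rather than $k=2$ as literally written in the lemma statement — but this is a harmless indexing quirk of the statement itself, not a gap in your argument.
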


\begin{proof}
	The proof follows immediately by writing down $r_k-q_k$ and checking the initial conditions. 
\end{proof}
 We will show that $r_N\rightarrow 1$ and $r_{N+1}\rightarrow 1$ as $N\rightarrow\infty$ by calculating the limits of $q_k$ and $s_k$. 
We know the limits of $q_k$ by Proposition \ref{prop:qklimits}. 
The sequences $q_{k}$ and $r_{k}$ are fixed by specifying sequences $a_{k}$ and $b_{k}$. This determines a corresponding sequence $s_{k}$, which has the same form as $q_{k}$ but depends on the sequences $a_{k+1}$ and $b_{k+1}$. 
If the sequences $a_{k+1}$ and $b_{k+1}$ satisfy the conditions of Proposition~\ref{prop:qklimits}, then convergence of $s_{k}$ follows. 
We verify this in the proof in the following proposition.

\begin{prop}\label{prop:rklimits}  
	Let 
	\begin{align*}
		r_{k+1} &= (1+\rho+b_k-\epsilon_k^2)r_k - (\rho+b_k)r_{k-1}, & r_0&=1, & r_1 &=1. &
	\end{align*} 
    If $\{b_{k}\}_{k\geq1}$ and $\{\epsilon_{k}\}_{k\geq1}$ satisfy any of the conditions (i)-(iii).  Suppose $c_{k}\in\mathbb{C}$ and $(c_{k})$ bounded for all $k\in \mathbb{N}$.
	\begin{enumerate}
		\item[i)] $\displaystyle{b_k = \frac{2\pi ic_k}{N^3}+O\lp\frac{1}{N^4}\rp}$ and $\displaystyle{\epsilon_k= O\lp\frac{1}{N^2}\rp}$; or
		\item[ii)]  $\displaystyle{b_k = \frac{2\pi ic_k}{N^2}+O\lp\frac{1}{N^3}\rp}$ such that $\displaystyle{c_k+c_{k+1}=O\lp\frac{1}{N}\rp}$ for $k$ odd and $\displaystyle{\epsilon_k= O\lp\frac{1}{N^2}\rp}$; or 
		\item[iii)] $\displaystyle{b_k = \frac{c_{k}}{N^2}+O\lp\frac{1}{N^{4}}\rp}$ where $\displaystyle{c_{k} = Ce^{2\pi i k/N}}$ for $C\in\mathbb{C}$ and $\displaystyle{\epsilon_k= O\lp\frac{1}{N^2}\rp}$
	\end{enumerate}
	Then as $N\rightarrow\infty$,
	\begin{equation*}
		r_N\rightarrow 1\quad\text{and}\quad r_{N+1}\rightarrow 1.
	\end{equation*}
\end{prop}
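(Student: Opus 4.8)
The plan is to reduce the statement for $r_k$ to the already-proven results for $q_k$ via the decomposition of Lemma~\ref{lem:r_kdiffeq}, namely $r_k = q_k - (\rho+b_1)s_{k-1}$, where $s_k$ satisfies the same recurrence as $q_k$ but with shifted coefficient sequences $a_{k+1}, b_{k+1}$ (and $s_0=0$, $s_1=1$). Since $b_1 = O(1/N^2)$ in all three regimes, we have $\rho + b_1 \to 1$, so it suffices to show $q_N \to 0$, $q_{N+1}\to 1$, and $s_{N-1}\to -1$, $s_N \to 0$ (so that $r_N = q_N - (\rho+b_1)s_{N-1} \to 0 - 1\cdot(-1) = 1$ and similarly $r_{N+1}\to 1$). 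The limits for $q$ are exactly Proposition~\ref{prop:qklimits}. For $s$, note that the sequence $\tilde{s}_k := s_{k-1}$ — or rather, a re-indexed version — behaves like the autonomous $T_k$-comparison: in the unperturbed case $a\equiv b\equiv 0$, $s_k$ reduces to $T_k$ shifted appropriately, and one checks $T_{N-1} = \overline{T_{N+1}}\cdot(\text{phase})$; more directly, from $T_k = (1-\rho^k)/(1-\rho)$ with $\rho = e^{2\pi i/N}$ one computes $T_{N-1} = (1-\rho^{-1})/(1-\rho) = -\rho^{-1} = -e^{-2\pi i/N} \to -1$ and $T_N = 0$. So the target limits for $s$ are the natural ones.

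First I would verify that the shifted sequences $(a_{k+1}, b_{k+1})_{k\geq 1}$ still satisfy the hypotheses of Proposition~\ref{prop:qklimits} in each of the three cases. In case (i) this is immediate: a cubic bound $b_k = \frac{c_k}{N^3} + O(1/N^4)$ is stable under the index shift $k \mapsto k+1$, and likewise $\epsilon_{k+1} = O(1/N^2)$. In case (iii), $c_k = Ce^{2\pi ik/N}$ and $c_{k+1} = Ce^{2\pi i/N}c_k$, so the shifted sequence is again of the exponential form with constant $Ce^{2\pi i/N}$, which Corollary~\ref{expsums} handles. Case (ii) is the one requiring a small amount of care: the condition "$c_k + c_{k+1} = O(1/N)$ for $k$ odd" becomes, after shifting, "$c_{k+1} + c_{k+2} = O(1/N)$ for $k$ odd," i.e. "$c_j + c_{j+1} = O(1/N)$ for $j$ even." This is a genuinely different pairing, so I would either impose the two-sided cancellation (which is what the theorem statements in the introduction with the symmetry conditions on $c_k + c_{N-k}$ actually amount to), or observe that one may re-sum $\sum s_{k}$-type expressions by pairing from the other end; the clean route is to show directly that $\sum_{k=1}^{N-1} a_{k+1} T_k = O(1/N)$ using summation by parts plus $T_{N}=0$, absorbing the boundary term $a_N T_{N-1}$, which is $O(1/N^2)$, and the shifted-index telescoping.

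Once the shifted sequences are checked, I would apply Proposition~\ref{prop:qklimits} to the sequence $s$, obtaining $s_N \to 0$ and $s_{N+1}\to 1$. To get $s_{N-1}$ I would use the recurrence backwards one step, or more transparently compare $s_{N-1}$ with $T_{N-1}$ using Lemma~\ref{lem:relationship}: $s_{N-1} - T_{N-1} = \sum_{j=1}^{N-2}[a_{j+1}s_j - b_{j+1}s_{j-1}]T_{N-1-j}$, which by the bounds $|s_j| \leq 2N$, $|s_j - s_{j-1}| \leq \tilde{C}$ from Lemma~\ref{lem:q_kdifferencebound} (applied to the shifted recurrence) and the summation/cubic hypotheses is $O(1/N)$; combined with $T_{N-1} = -e^{-2\pi i/N} \to -1$ this gives $s_{N-1}\to -1$. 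Assembling: $r_N = q_N - (\rho+b_1)s_{N-1} \to 0 - 1\cdot(-1) = 1$ and $r_{N+1} = q_{N+1} - (\rho+b_1)s_N \to 1 - 1\cdot 0 = 1$, as claimed.

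The main obstacle I anticipate is case (ii): the parity of the index in the cancellation condition $c_k + c_{k+1} = O(1/N)$ is not preserved by the shift $k\mapsto k+1$, so one cannot simply invoke Corollary~\ref{consecutive} as a black box for the $s$-sequence. Resolving this requires either strengthening the hypothesis to a two-sided condition (consistent with condition (5) of Theorem~\ref{thm:B}) or redoing the summation-by-parts estimate of Proposition~\ref{pro:summationconstant} directly for the boundary-shifted sum $\sum_{k} a_{k+1}T_k$, tracking the extra endpoint contribution. Everything else is a routine matter of propagating the $O(1/N^3)$ and $O(1/N^2)$-with-cancellation estimates through the linear decomposition $r_k = q_k - (\rho+b_1)s_{k-1}$.
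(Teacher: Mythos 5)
Your proposal is correct and follows essentially the same route as the paper's proof: the decomposition $r_k = q_k - (\rho+b_1)s_{k-1}$ from Lemma~\ref{lem:r_kdiffeq}, Proposition~\ref{prop:qklimits} for the $q$-sequence, the shifted-coefficient argument for $s$ together with $T_{N-1}=-\rho^{-1}\to -1$, and Corollary~\ref{expsums} with constant $C\rho$ in case (iii). Your observation that the parity in the case (ii) cancellation condition is not preserved by the index shift is a genuine subtlety that the paper's proof passes over silently, and your proposed fix (re-summing $\sum_j c_{j+1}T_j$ via $T_j=T_{j+1}-\rho^{j}$ and absorbing the resulting $O(N)$ boundary and geometric terms, which stay within the $C/N$ tolerance of condition~\eqref{eq: ak/N^2 sum requirement}) closes it.
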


\begin{proof}
    By Lemma \ref{lem:r_kdiffeq},
    \[r_k=q_k-(\rho+b_1)s_{k-1},\]
    from which we will calculate $r_N\rightarrow1$ and $r_{N+1}\rightarrow1$ as $N\rightarrow\infty$.

    First, recall that in Propositions~\ref{prop:cubic} and \ref{pro:summationconstant} we showed
    \[q_N\rightarrow0\quad \text{and}\quad q_{N+1}\rightarrow1.\]

    Next, observe that $s_k$ is $q_k$ but with shifted coefficients i.e. the sequence  $\{b_k\}_{k=1}^{N}$ satisfies the conditions of Proposition \ref{prop:qklimits}; hence, the arguments for $q_k$ in Propositions \ref{prop:cubic} and \ref{pro:summationconstant} apply for $s_k$ from which we get
    \begin{align*}
      |s_{N-1}+\rho^{-1}| &= |s_{N-1}-T_{N-1}| \leq O\left(\frac{1}{N}\right)\\
       |s_{N}| &= |s_{N}-T_{N}| \leq O\left(\frac{1}{N}\right)
    \end{align*}
    where the $N-1$ case follows similarly to the $N$ and $N+1$ cases; in case (iii), we use Corollary \ref{expsums} with $C=\rho$.
    
    This allows us to conclude
    \begin{equation*}
		r_N = q_N - (\rho+b_1)s_{N-1} \rightarrow 0 - \left(\rho+0\right)\lp -\rho^{-1}\rp = 1
	\end{equation*}
	and 
	\begin{equation*}
		r_{N+1} = q_{N+1} - (\rho+b_1)s_{N} \rightarrow 1 -(\rho+0)(0) = 1,
	\end{equation*}
	as $N\rightarrow\infty$. 
\end{proof}

\section{Non-Autonomous Implosions}\label{sec:nonautonomousimplosions}

In this section we prove all our theorems related to non-autonomous parabolic implosion. We use the estimates obtained in Section \ref{sec:orthogonalpolynomials}, as well as the results explained in Section \ref{sec:compositionoflineartransformations}, that link coefficients of compositions of M\"obius transformations.

We will separate our theorems into two categories. As before, starting from the map $\displaystyle{f(z)= \frac{z}{1-z}}$, we first investigate multiplicative perturbations of $f$. On our second set of theorems we focus on perturbations that are of both multiplicative and additive form.
\setcounter{mainthm}{0}
	\begin{mainthm}
		Let \[f_k(z):= \frac{\rho_kz}{1-z},\]
		where $\rho_k = e^{2\pi i\theta_k}$ and $\theta_k$ satisfies any of the following conditions:
		
		\begin{enumerate}
		\item
		$\displaystyle{\theta_k = \frac{1}{N} + O\lp\frac{1}{N^3}\rp}$; or
		\item
		$\displaystyle{\theta_k = \frac{1}{N} + \frac{c_k}{N^2}}$ where $c_k+ c_{k+1} = O\lp \frac{1}{N} \rp$ for $k$ odd; or
		\item
		$\displaystyle{\theta_k = \frac{1}{N} + \frac{c_k}{N^2}}$ where $\displaystyle{c_k =Ce^{2\pi ik/N}} $  for $C\in\mathbb{C}$.
		\end{enumerate}
		Then:
		\begin{equation*}
			f_N\circ f_{N-1} \circ \cdots \circ f_1 \rightarrow \text{Id}
		\end{equation*}
		as $N\to \infty$ uniformly on compact sets of $\mathbb{C}$.
	\end{mainthm}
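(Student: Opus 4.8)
The plan is to reduce the statement to the orthogonal--polynomial estimates of Section~\ref{sec:orthogonalpolynomials} and then translate the hypotheses on $\theta_k$ into hypotheses on the coefficients of the recursion. By Corollary~\ref{cor:onlyrho}, the composition $F_N := f_N\circ\cdots\circ f_1$ is the M\"obius map $F_N(z)=\frac{A_N z}{C_N z+1}$ with $A_N=q_{N+1}-q_N$, $C_N=-q_N$, where $q_{k+1}=(1+\rho_k)q_k-\rho_k q_{k-1}$, $q_0=0$, $q_1=1$; in particular (from $r_0=r_1=1$) the denominator coefficient $D_N$ is identically $1$. Hence it suffices to prove $q_N\to 0$ and $q_{N+1}\to 1$: then $A_N\to 1$, $C_N\to 0$, and on a compact set $K$ the denominator $C_N z+1$ is eventually bounded away from $0$, so
\[
F_N(z)-z=\frac{(A_N-1)z-C_N z^{2}}{C_N z+1}\longrightarrow 0
\]
uniformly on $K$.

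To prove the convergence of $q_N$ and $q_{N+1}$ I would set $\rho=e^{2\pi i/N}$ and $b_k:=\rho_k-\rho$, so that the recursion becomes $q_{k+1}=(1+\rho+b_k)q_k-(\rho+b_k)q_{k-1}$, i.e.\ the recurrence \eqref{eq:qk formula} with $a_k=b_k$ (the $\epsilon_k\equiv 0$ instance of the remark after Lemma~\ref{lem:relationship}). This is precisely the setting of Proposition~\ref{prop:qklimits} with $\epsilon_k=0$, so the only thing left is to check that hypotheses (1)--(3) on $\theta_k$ force $b_k$ into one of the forms (i)--(iii) of that proposition. Since $\theta_k-\tfrac1N$ is small --- at most $O(1/N^2)$ in all three cases --- a Taylor expansion gives
\[
b_k=\rho\bigl(e^{2\pi i(\theta_k-1/N)}-1\bigr)=2\pi i\,\rho\,\bigl(\theta_k-\tfrac1N\bigr)+O\!\bigl((\theta_k-\tfrac1N)^2\bigr).
\]
In case (1), $\theta_k-\tfrac1N=O(1/N^3)$ yields $b_k=\tfrac{c_k}{N^3}+O(1/N^4)$ with $c_k:=2\pi i\rho N^{3}(\theta_k-\tfrac1N)$ uniformly bounded, which is form (i). In cases (2) and (3), $\theta_k-\tfrac1N=\tfrac{c_k}{N^2}$ yields $b_k=\tfrac{\tilde c_k}{N^2}+O(1/N^4)$ with $\tilde c_k:=2\pi i\rho\,c_k$; multiplication by $2\pi i\rho$ leaves the structural hypotheses intact ($\tilde c_k+\tilde c_{k+1}=2\pi i\rho\,(c_k+c_{k+1})=O(1/N)$ for odd $k$ in case (2); $\tilde c_k=(2\pi i\rho C)e^{2\pi ik/N}$ is still a constant times $e^{2\pi ik/N}$ in case (3)), so we land in forms (ii) and (iii). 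Proposition~\ref{prop:qklimits} then gives $q_N\to 0$ and $q_{N+1}\to 1$ in every case, completing the argument.

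The substantive work is already contained in Section~\ref{sec:orthogonalpolynomials}; here the only delicate point is the Taylor bookkeeping --- one must check that the quadratic remainder in the expansion of $b_k$ is genuinely $O(1/N^4)$ so that it is absorbed by the error terms permitted in Propositions~\ref{prop:cubic} and \ref{pro:summationconstant}, and that the bounded prefactor $2\pi i\rho$ does not disturb the summation/cancellation conditions behind Corollaries~\ref{consecutive} and \ref{expsums}. The remaining step, deducing uniform convergence of $F_N$ on compacta from convergence of its coefficients, is routine and uses only that the pole of $F_N$ escapes to infinity.
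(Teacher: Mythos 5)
Your proposal is correct and follows essentially the same route as the paper: reduce via Corollary~\ref{cor:onlyrho} to the recursion for $q_k$, set $b_k=\rho_k-\rho$ with $\rho=e^{2\pi i/N}$, Taylor-expand to verify that each hypothesis on $\theta_k$ places $b_k$ in the corresponding case (i)--(iii) of Proposition~\ref{prop:qklimits}, and conclude from $q_N\to 0$, $q_{N+1}\to 1$. Your bookkeeping of the quadratic remainder as $O(1/N^4)$ and your explicit remark that the bounded prefactor $2\pi i\rho$ preserves the cancellation conditions are, if anything, slightly more careful than the paper's own write-up.
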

	
	\begin{proof}
	By Corollary \ref{cor:onlyrho}, 
		\[F_{N}\coloneqq f_N\circ f_{N-1} \circ \cdots \circ f_1(z) = \frac{A_N z }{C_N z + 1}\] where $A_N = q_{N+1}-q_N$ and  $C_N=-q_N$ where:
\begin{align*}
q_{k+1} &= (1+\rho_k) q_k-\rho_k q_{k-1}, & q_0&=0, & q_1&=1.&
\end{align*}		
We will show that given the $\theta_{k}$ above, the coefficients in the orthogonal polynomial $q_k$ satisfy Proposition \ref{prop:qklimits} yielding 
\begin{align*}
 |A_N| &= |q_{N+1}-q_N| \rightarrow 1 \ \text{as} \ N\rightarrow \infty, \text{ and} \\
 |C_N| &= |q_N| \rightarrow 0 \ \text{as} \ N\rightarrow \infty. 
\end{align*}  
Thus, $F_{N}\rightarrow \text{Id}$ as $N\rightarrow \infty.$

We have $\epsilon_{k} \equiv 0$ and $b_{k}\coloneqq  \rho_{k}-\rho$. Therefore, the above sequence $q_{k}$ becomes
\begin{align*}
    q_{k+1} &= (1+\rho +b_{k}-\epsilon_{k}^2) q_k-(\rho+b_k)q_{k-1}, & q_0&=0, & q_1&=1.&
\end{align*}
Therefore, we are in the setting of Proposition \ref{prop:qklimits}.  

The condition $\epsilon_{k} = O\lp\frac{1}{N^2}\rp$ of Proposition \ref{prop:qklimits}, is trivially satisfied, so we need only show the $b_{k}$ conditions are satisfied. 
For $\theta_k = \frac{1}{N} + O\lp\frac{1}{N^3}\rp$, we have
\begin{enumerate}
		\item If $\displaystyle{\theta_k = \frac{1}{N} + O\lp\frac{1}{N^3}\rp}$, then
        			\begin{align*}
        				b_{k} = \rho_{k} - \rho &= e^{2 \pi i\lp \frac{1}{N} +O\lp\frac{1}{N^3}\rp \rp} - e^{\frac{2 \pi i}{N}}\\
        				& = e^{\frac{2 \pi i}{N}} \lp e^{2 \pi i O\lp\frac{1}{N^3} \rp} -1 \rp \\
        				& = O\lp\frac{1}{N^3}\rp.
        			\end{align*}
		This satisfies condition (i) Proposition \ref{prop:qklimits}.
		\item If $\displaystyle{\theta_k = \frac{1}{N} + \frac{c_k}{N^2}}$ and $c_k+ c_{k+1} = O\lp \frac{1}{N} \rp$, then
		\begin{align*}
b_{k} = \rho_{k} - \rho &= e^{2 \pi i\lp \frac{1}{N} + \frac{c_k}{N^2} \rp} - e^{\frac{2 \pi i}{N}}\\
        				& = e^{\frac{2 \pi i}{N}} \lp e^{2 \pi i \frac{c_k}{N^2} } -1 \rp \\
        				& = \frac{2\pi i c_{k}}{N^2} + O\left(\frac{1}{N^{3}}\right),
        			\end{align*}
			satisfying condition (ii) of Proposition \ref{prop:qklimits}.
		\item If $\displaystyle{\theta_k = \frac{1}{N} + \frac{c_k}{N^2}}$ and $\displaystyle{c_k = Ce^{2\pi ik/N}}$, then
\begin{align*}
b_{k} = \rho_{k} - \rho &= e^{2 \pi i\lp \frac{1}{N} + \frac{C}{N^2}e^{2\pi ik/N} \rp} - e^{\frac{2 \pi i}{N}}\\
        				& = e^{\frac{2 \pi i}{N}} \lp e^{\frac{2\pi iC }{N^2}e^{2\pi ik/N}  } -1 \rp \\
        				& = \frac{2\pi iC}{N^2}e^{2\pi ik/N} + O\left(\frac{1}{N^{4}}\right),
    \end{align*}        
        satisfying condition (iii) of Proposition \ref{prop:qklimits}.
\end{enumerate} 
	\end{proof}

	\begin{mainthm} 
		Let
		\begin{equation*}
			f_k(z) := \frac{\rho_k z}{1-z} + \epsilon_k^2,
		\end{equation*}
		where $\rho_k=e^{2\pi i \theta_k}.$ Suppose $\theta_k$ and $\epsilon_k$ satisfy any of the following conditions:
		\begin{enumerate}
		\item
		$\displaystyle{\theta_k = \frac{1}{N} + O\lp\frac{1}{N^3}\rp}$ and $\displaystyle{\epsilon_k=  O\lp \frac{1}{N^2}\rp}$. 
		\item
		$\displaystyle{\theta_k = \frac{1}{N} + \frac{c_k}{N^2}}$ where $\displaystyle{c_k+c_{k+1}=O\lp \frac{1}{N}\rp}$ for odd $k$, and $\displaystyle{\epsilon_k=  O\lp \frac{1}{N^2}\rp}$. \\
        \item
  		$\displaystyle{\theta_k = \frac{1}{N} + \frac{c_k}{N^2}}$ where $\displaystyle{c_k = Ce^{2\pi ik/N},}$ and $\displaystyle{\epsilon_k=  O\lp \frac{1}{N^2}\rp}$. \\
		\item
		$\displaystyle{\theta_k = 0}$, and $\displaystyle{\epsilon_k=\frac{\pi}{N}+ O\lp \frac{1}{N^3}\rp}$. 
		\item
		$\displaystyle{\theta_k = 0}$, and $\displaystyle{\epsilon_k=\frac{\pi}{N}+  \frac{c_k}{N^2}}$ where $\displaystyle{c_k+c_{N-k}=O\lp \frac{1}{N}\rp}$. 
		\end{enumerate}
		Then 
		\begin{equation*}
			f_N\circ f_{N-1} \circ \cdots \circ f_1 \rightarrow \text{Id}
		\end{equation*}
        as $N\rightarrow\infty$, uniformly on compact subsets of $\mathbb{C}$.
	\end{mainthm}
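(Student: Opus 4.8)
The plan is to reduce the statement, exactly as in the proof of Theorem~A, to the convergence of the orthogonal polynomials $q_N$ and $r_N$ attached to the recursion in Lemma~\ref{lem:recursivepoly}. By that lemma, writing $F_N := f_N\circ f_{N-1}\circ\cdots\circ f_1$, we have
\[
F_N(z) = \frac{A_N z + B_N}{C_N z + D_N},\qquad A_N = q_{N+1}-q_N,\quad B_N = r_N - r_{N+1},\quad C_N = -q_N,\quad D_N = r_N,
\]
so it suffices to show $q_N\to 0$, $q_{N+1}\to 1$, $r_N\to 1$, and $r_{N+1}\to 1$ as $N\to\infty$; these four limits give $A_N\to 1$, $B_N\to 0$, $C_N\to 0$, $D_N\to 1$, hence $F_N\to\mathrm{Id}$ uniformly on compact subsets of $\mathbb{C}$ (the denominator $C_N z + D_N$ stays bounded away from $0$ on any fixed compact set once $N$ is large). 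The limits for $q_N, q_{N+1}$ are supplied by Proposition~\ref{prop:qklimits} and those for $r_N, r_{N+1}$ by Proposition~\ref{prop:rklimits}, once we check that the coefficients $a_k = b_k - \epsilon_k^2$ and $b_k = \rho_k - \rho$ (with $\rho = e^{2\pi i/N}$) meet the required hypotheses in each of the five cases.

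For cases (1)--(3), the computation of $b_k = \rho_k - \rho$ is identical to the one carried out in the proof of Theorem~A: expanding $e^{2\pi i\theta_k} - e^{2\pi i/N}$ gives $b_k = O(1/N^3)$ in case (1), $b_k = \frac{2\pi i c_k}{N^2} + O(1/N^3)$ in case (2), and $b_k = \frac{2\pi i C}{N^2}e^{2\pi i k/N} + O(1/N^4)$ in case (3). The only new ingredient is the additive term: since $\epsilon_k = O(1/N^2)$ we get $\epsilon_k^2 = O(1/N^4)$, so $a_k = b_k - \epsilon_k^2$ has exactly the same leading-order form as $b_k$, merely absorbing an $O(1/N^4)$ error. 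Thus $(a_k,b_k)$ fall under hypothesis (i) of Proposition~\ref{prop:qklimits} in case (1) and hypotheses (ii), (iii) in cases (2), (3); the same holds for the shifted sequences needed in Proposition~\ref{prop:rklimits} (note that for the exponential profile in case (3) the shift $k\mapsto k+1$ only multiplies $C$ by $e^{2\pi i/N}$, which stays bounded). This yields the four limits and hence $F_N\to\mathrm{Id}$.

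Cases (4) and (5) are the purely additive regime $\rho_k = 1$, i.e. $\theta_k = 0$. Here $b_k = \rho_k - \rho = 1 - e^{2\pi i/N} = -\frac{2\pi i}{N} + O(1/N^2)$, which is \emph{not} small, so we cannot directly compare to $T_k$ with $\rho = e^{2\pi i/N}$; instead the natural comparison is with the Chebyshev-type sequence of the autonomous additive model, exactly as in \cite{Viv20}. The recursion becomes $q_{k+1} = (2 - \epsilon_k^2) q_k - q_{k-1}$, i.e. the coefficient of $q_k$ is $2 - \epsilon_k^2$ and the coefficient of $q_{k-1}$ is $1$; with $\epsilon_k = \pi/N + O(1/N^3)$ (case 4) or $\epsilon_k = \pi/N + c_k/N^2$ with $c_k + c_{N-k} = O(1/N)$ (case 5), one checks $\epsilon_k^2 = \pi^2/N^2 + \delta_k$ where $\delta_k = O(1/N^4)$ in case (4) and $\delta_k = \frac{2\pi c_k}{N^3} + O(1/N^4)$ in case (5), with the symmetry condition $c_k + c_{N-k} = O(1/N)$ playing the role of the summation constraint \eqref{eq: ak/N^2 sum requirement} after the reflection $j \mapsto N-j$ in the comparison formula (the relevant comparison polynomial here is $U_k = \frac{\sin(k\pi/N)}{\sin(\pi/N)}$, which is symmetric under $k\mapsto N-k$ up to sign, so pairing $c_k$ with $c_{N-k}$ produces the needed cancellation). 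This case is the one requiring the most care, since it uses a different comparison sequence than cases (1)--(3); the main obstacle is verifying that the quadratic-error estimates (the analogues of Lemma~\ref{lem:q_kdifferencebound} and Proposition~\ref{pro:summationconstant}) transfer to the additive comparison model and that the symmetry hypothesis $c_k + c_{N-k} = O(1/N)$ indeed controls the corresponding weighted sum. Once that is established, the four limits $q_N\to 0$, $q_{N+1}\to 1$, $r_N\to 1$, $r_{N+1}\to 1$ follow and we conclude $F_N\to\mathrm{Id}$ as before.
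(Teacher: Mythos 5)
Your proposal follows essentially the same route as the paper: reduce via Lemma~\ref{lem:recursivepoly} to the four limits $q_N\to 0$, $q_{N+1}\to 1$, $r_N\to 1$, $r_{N+1}\to 1$; verify in cases (1)--(3) that $b_k=\rho_k-\rho$ and $a_k=b_k-\epsilon_k^2$ (with $\epsilon_k^2=O(1/N^4)$ absorbed into the error term) satisfy the hypotheses of Propositions~\ref{prop:qklimits} and~\ref{prop:rklimits}; and treat the purely additive cases (4)--(5) by comparison with the Chebyshev polynomials as in \cite{Viv20}. The only difference is that for cases (4)--(5) the paper simply cites \cite{Viv20}, whereas you sketch that argument (correctly identifying the $U_k$ comparison and the role of the symmetry $c_k+c_{N-k}=O(1/N)$) without carrying out the details.
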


\begin{proof}
    When $\epsilon_{k} \neq 0$, then by Lemma \ref{lem:recursivepoly}, 
	\begin{align*}
	F_{N}\coloneqq f_N\circ f_{N-1} \circ \cdots \circ f_1(z) = \frac{A_N z +B_N }{C_N z + D_N}
	\end{align*}
where $A_N = q_{N+1}-q_N$, $B_N = r_{N}-r_{N+1}$ $C_N=-q_N$, and $D_N = r_N$ where
\begin{align*}
q_{k+1} &= (1+\rho_k-\epsilon_k^2) q_k-\rho_k q_{k-1}, & q_0&=0, & q_1&=1;& \\
r_{k+1} &= (1+\rho_k-\epsilon_k^2) r_k-\rho_k r_{k-1}, & r_0&=1, & r_1&=1.&
\end{align*}	
Again, since $b_k:= \rho_k-\rho$ we are in the setting of Proposition \ref{prop:qklimits} and Proposition \ref{prop:qklimits} and \ref{prop:rklimits}. 
For the $\theta_{k}$ in cases (1) and (2), and (3) we computed $b_{k}$ in the proof of Theorem \ref{thm:A}: 
\begin{enumerate}
   \item $\displaystyle{b_{k}=O\lp\frac{1}{N^{3}}\rp}$,
   \item $\displaystyle{b_{k} = \frac{2\pi i c_{k}}{N^2} + O\left(\frac{1}{N^{3}}\right)}$ where $\displaystyle{c_k+c_{k+1}=O\lp\frac{1}{N}\rp}$, and
   \item $\displaystyle{b_k =\frac{2\pi iC}{N^2}e^{2\pi ik/N} + O\left(\frac{1}{N^{4}}\right)}$. 
\end{enumerate}
Since $\epsilon_{k} =O\lp \frac{1}{N^2}\rp$ and each $b_k$ satisfies the corresponding condition (i)-(iii) of Proposition \ref{prop:qklimits}, the terms $q_N \to 0$ and $q_{N+1} \to 1$ as $N \to \infty$. 
The assumptions on $b_{k}$ and $\epsilon_{k}$ are identical for Proposition \ref{prop:rklimits}. Therefore, it also follows that $r_N\rightarrow 1$ and $r_{N+1}\rightarrow 1$ as $N \to \infty$.
Combining the results from Propositions \ref{prop:qklimits} and \ref{prop:rklimits}, we can conclude that
\begin{align*}
 |A_N| &= |q_{N+1}-q_N| \rightarrow 1 \\ 
 |C_N| &= |q_N| \rightarrow 0 \\ 
 |B_N| &= |r_{N}-r_{N+1}| \rightarrow 0 \\ 
 |D_N| &= |r_N| \rightarrow 1 \\ 
\end{align*}  
as $N\rightarrow\infty$. 
The above together again imply $F_{N}\rightarrow \text{Id}$ as $N\rightarrow \infty.$ 

Cases (4) and (5) were computed in \cite{Viv20} and are included for completeness.
\end{proof}

\section{Difference with the Additive Condition}\label{sec:differencewiththeadditive}

In the previous sections we analyzed the multiplicative setting, where compositions of maps of the form 
\[
f_k(z) = \frac{\rho_k z}{1-z}
\]
exhibit delicate convergence properties depending on the sequence $\{\rho_k\}$. 
We now contrast this with the additive case, which at first sight appears closely related but leads to very different asymptotic behavior.

Recall the following family of maps:
\begin{align*}
g_k(z)=\frac{z}{1-z}+\epsilon_k^2
\end{align*}
where we fix $N$.
Maps of this type were considered in \cite{Viv20} where it is proven that:
\[
g_N \circ g_{N-1} \circ \cdots \circ g_1 \to \mathrm{Id},
\]
under specific conditions on the sequence $\{\epsilon_k\}$.

Each individual $g_k$ is analytically conjugated to a map $f_k$ of the form
\begin{align*}
f_k(z) = \frac{\rho_k z}{1-z},
\end{align*}
provided that the parameters satisfy
\begin{equation*}
\sqrt{\rho_k} + \frac{1}{\sqrt{\rho_k}} = 2 - \epsilon_k^2.
\end{equation*}

However, the following theorem shows that, even when the maps are pairwise conjugated, the convergence of their compositions may behave very differently.

\begin{mainthm}

There exist sequences $\{\epsilon_k\}$ and $\{\rho_k\}$ for $1\leq k\leq N$, such that:
\begin{enumerate}
\item For each $k$, the maps $\displaystyle{f_k(z)=\frac{\rho_k z}{1-z}}$ and $\displaystyle{g_k(z)=\frac{z}{1-z}+\epsilon_k^2}$ are conjugated.\label{part1:thmC}
\item We have $g_N \circ g_{N-1} \circ\cdots \circ g_1$ converges uniformly on compacts to the Identity.\label{part2:thmC}
\item The composition $f_N \circ f_{N-1}\circ\cdots \circ f_1(z)$ does not converge pointwise to $z$ for any $z \neq 0$.\label{part3:thmC}
\end{enumerate}
\end{mainthm}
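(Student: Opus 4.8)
The plan is to make the additive composition \emph{autonomous and exactly periodic}, and then exploit the only freedom left in the conjugacy relation to collapse the multiplicative composition. Fix $N$ even, set $\rho:=e^{2\pi i/N}$ and $\epsilon^2:=2-2\cos(\pi/N)=4\sin^2(\pi/(2N))$, and take $\epsilon_k:=2\sin(\pi/(2N))$ for all $k$, together with $\rho_k:=\rho$ for $1\le k\le N/2$ and $\rho_k:=\rho^{-1}$ for $N/2<k\le N$. The key observation is that the conjugacy condition $\sqrt{\rho_k}+1/\sqrt{\rho_k}=2-\epsilon_k^2$ pins down $\rho_k$ only up to the inversion $\rho_k\leftrightarrow\rho_k^{-1}$ — i.e.\ up to reversing the sense of the underlying rotation — and I will use this latitude, with the reversal placed halfway along the composition.

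For part~\eqref{part1:thmC} I would compare traces. The normalized $\mathrm{SL}_2$ matrix of $f_k$ is $\tfrac{1}{\sqrt{\rho_k}}\bigl(\begin{smallmatrix}\rho_k&0\\-1&1\end{smallmatrix}\bigr)$, with trace $\sqrt{\rho_k}+1/\sqrt{\rho_k}=2\cos(\pi/N)$ for \emph{both} choices $\rho_k=\rho^{\pm1}$; the matrix of $g_k=g$ is $\bigl(\begin{smallmatrix}1-\epsilon^2&\epsilon^2\\-1&1\end{smallmatrix}\bigr)\in\mathrm{SL}_2$, with trace $2-\epsilon^2=2\cos(\pi/N)$. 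All three matrices have eigenvalue pair $\{e^{i\pi/N},e^{-i\pi/N}\}$ and, since $N\ge2$, are diagonalizable, hence conjugate in $\mathrm{PGL}_2(\mathbb{C})$; this is \eqref{part1:thmC}, and one should stress that the conjugating transformation is different for $\rho_k=\rho$ and for $\rho_k=\rho^{-1}$, so it genuinely varies with $k$ — the precise phenomenon advertised in the introduction. For part~\eqref{part2:thmC}: because $g_k\equiv g$ and its matrix $M$ has eigenvalues $e^{\pm i\pi/N}$, the power $M^{N}$ has eigenvalues $e^{\pm i\pi}=-1$, so $M^{N}=-I$ and $g^{\circ N}=\mathrm{Id}$ \emph{exactly}; hence $g_N\circ\cdots\circ g_1=\mathrm{Id}$ for every even $N$, which converges to the identity uniformly on compacts. (Alternatively one may note $\epsilon_k=\pi/N+O(1/N^3)$ and invoke Theorem~\ref{thm:B}(4).)

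The substance of the proof is part~\eqref{part3:thmC}. Write $P_+:=\bigl(\begin{smallmatrix}e^{i\pi/N}&0\\-e^{-i\pi/N}&e^{-i\pi/N}\end{smallmatrix}\bigr)$ for the normalized matrix of $z\mapsto e^{2\pi i/N}z/(1-z)$, and $P_-:=\overline{P_+}$ for the one with $\rho^{-1}$; then $f_N\circ\cdots\circ f_1$ is represented by $P_-^{\,N/2}P_+^{\,N/2}$. Since $P_\pm$ is lower triangular, the power formula for $2\times2$ lower-triangular matrices gives $P_+^{\,N/2}=\bigl(\begin{smallmatrix}i&0\\-e^{-i\pi/N}/\sin(\pi/N)&-i\end{smallmatrix}\bigr)$ and $P_-^{\,N/2}=\overline{P_+^{\,N/2}}$, and a short multiplication yields
\[
   P_-^{\,N/2}P_+^{\,N/2}=\begin{pmatrix}1&0\\-2i\cot(\pi/N)&1\end{pmatrix},
   \qquad\text{i.e.}\qquad
   f_N\circ\cdots\circ f_1(z)=\frac{z}{1-2i\cot(\pi/N)\,z}.
\]
For any fixed $z\ne0$ the denominator blows up as $\cot(\pi/N)\to\infty$, so $f_N\circ\cdots\circ f_1(z)\to0\ne z$ (and it equals $0$ at $z=0$); this is exactly the failure asserted in \eqref{part3:thmC}, and in fact the compositions degenerate to the constant map $0$. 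For odd $N$ one splits at $\lfloor N/2\rfloor$, picking up a bounded correction factor, and the lower-left entry still grows linearly in $N$, so the same conclusion holds.

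I do not anticipate a genuine obstacle: everything is explicit and the only real computation is the $2\times2$ matrix product in part~\eqref{part3:thmC}. The one point requiring care is the bookkeeping showing that the mid-composition reversal turns the ``half-turn'' diagonal matrices $\mathrm{diag}(i,-i)$ and $\mathrm{diag}(-i,i)$ into a \emph{unipotent} product with off-diagonal entry of size $\asymp N$ — this is the mechanism that collapses the multiplicative dynamics while the autonomous additive dynamics returns precisely to the identity.
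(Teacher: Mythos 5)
Your proof is correct, and it takes a genuinely different route from the paper's. The paper chooses $\theta_k=\pi/(N-1)$ on the first half of indices and $\theta_k=\pi/(N+1)$ on the second half, sets $\rho_k=e^{2i\theta_k}$ (so all $\rho_k$ stay close to $e^{2\pi i/N}$) and $\epsilon_k=2\sin(\theta_k/2)$, verifies part~(2) via the criterion $\alpha_k+\alpha_{N-k}=O(1/N)$ from \cite{Viv20}, and verifies part~(3) by computing $q_N$ from the three-term recursion and showing $|q_N|\ge 1/\pi + o(1)$. Your construction instead freezes $\epsilon_k$ at the single value $2\sin(\pi/(2N))$ and exploits the $\rho\leftrightarrow\rho^{-1}$ ambiguity in the conjugacy relation $\sqrt{\rho_k}+1/\sqrt{\rho_k}=2-\epsilon_k^2$, sending half the $\rho_k$ to $e^{2\pi i/N}$ and the other half to its inverse. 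This buys you two things: part~(2) becomes an \emph{exact} identity $g^{\circ N}=\mathrm{Id}$ (since the normalized matrix is $\mathrm{SL}_2$, elliptic of trace $2\cos(\pi/N)$, so $M^N=-I$), and part~(3) reduces to the explicit closed-form product $P_-^{N/2}P_+^{N/2}=\left(\begin{smallmatrix}1&0\\-2i\cot(\pi/N)&1\end{smallmatrix}\right)$, which immediately gives $f_N\circ\cdots\circ f_1(z)=\frac{z}{1-2i\cot(\pi/N)z}\to0\neq z$ for $z\neq0$. I checked the matrix arithmetic, including $P_+^{N/2}=\left(\begin{smallmatrix}i&0\\-e^{-i\pi/N}/\sin(\pi/N)&-i\end{smallmatrix}\right)$ and the stated product, and it is all correct; so is the conjugacy argument via equal traces, determinant one and distinct eigenvalues $e^{\pm i\pi/N}$. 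The paper's construction has the virtue of staying in the regime ``$\rho_k$ near $e^{2\pi i/N}$,'' which makes it visibly a small perturbation of the setup in Theorems~\ref{thm:A}--\ref{thm:B}, whereas yours is sharper and more elementary but uses $\rho_k$ far from $e^{2\pi i/N}$ on half the indices; both are legitimate, since the theorem is purely existential. One small remark: restricting to even $N$ (as both you and the paper do) is enough, so your parenthetical about odd $N$ is unnecessary, though the estimate you sketch there is plausible.
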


This example illustrates that conjugacy at the level of individual maps does not imply comparable asymptotic behavior under composition.

\begin{proof}
As in the one variable situation, the relationship for the conjugacy is satisfied when:
\[
\rho_k=\exp(2i\theta_k)\textrm{ and }\epsilon_k = 2\sin(\theta_k/2).
\]
Assume for simplicity that $N=2m$. We choose: $\theta_k= \frac{\pi}{N-1}$ for $1\leq k \leq m $ and $\theta_k= \frac{\pi}{N+1}$ for $m+1 \leq k\leq 2m=N$.

In \cite{Viv20}, it is proven that: if $\epsilon_k = \frac{\pi}{N} + \frac{\alpha_k}{N^2}$ such that $\alpha_k + \alpha_{N-k} = O\lp 1/N\rp$, then \eqref{part2:thmC} holds. It is clear that this condition is satisfied for the choice of $\epsilon_k$.
In fact, the choice of $\epsilon_k$ implies that
$\alpha_k= 
\begin{cases}
  \pi + O(1/N) & \text{if $1\leq k \leq m$} \\
  -\pi + O(1/N) & \text{if $m+1\leq k \leq 2m.$}
\end{cases}
$
Therefore $\alpha_k+\alpha_{N-k} = O(1/N)$ and \eqref{part2:thmC} holds.

We now prove \eqref{part3:thmC}. For simplicity let us rename $s=\exp(2i\pi/(N-1))$ and $t=\exp(2i\pi/(N+1))$. We use the formula $q_{k+1}=(1+\rho_k)q_k -\rho_kq_{k-1}$ for $q_N$ and see that
        \begin{align*}
			q_N&= 1+s+s^2+\ldots+s^{m-1}+s^m\left(1+t+t^2+\ldots +t^{m-1}\right)\\
			&= \frac{s^m-1}{s-1}+ s^m\frac{t^m-1}{t-1}.
		\end{align*}
	For the first term, we have the following:
    \begin{align*}
    \frac{s^m-1}{s-1}=\frac{e^{iN\pi/(N-1)}-1}{e^{2i\pi/(N-1)}-1} = \frac{i}{\pi}(N-1) + \frac{1}{2} + O(1/N).
	\end{align*}
    Likewise:
    \begin{align*}
    \frac{t^m-1}{t-1}=\frac{e^{iN\pi/(N+1)}-1}{e^{2i\pi/(N+1)}-1} =\frac{i}{\pi}(N+1) +\frac{3}{2} + O(1/N).
	\end{align*}
    Putting it all together, along with the fact that $s^m = -1 - \frac{i\pi}{N}+O(1/N^2)$:
    \begin{align*}
    q_N =  -\frac{2i}{\pi} + O(1/N).
	\end{align*}
	Consequently,	
	\begin{align*}
		|q_N-T_N| &\geq \frac{1}{\pi}.
	\end{align*}
    for $N$ large enough.
	Thus, we do not get convergence to the identity.
\end{proof}

\section{Bifurcations for Skew-Products}\label{sec:bifurcationsforskew-products}
A standard way in which non-autonomous parabolic phenomena arise in higher dimension is through \emph{skew-product} maps
\[
F(z,w)=(f_w(z),g(w)),
\]
where the base dynamics $w\mapsto g(w)$ generates a parameter sequence
\(
w_0,w_1,\dots,w_{N-1}
\)
and hence a non-autonomous composition in the fiber,
\[
f_{w_{N-1}}\circ \cdots \circ f_{w_1}\circ f_{w_0}.
\]

In the parabolic context, one is interested in regimes where the fiber maps are small perturbations of the model parabolic M\"obius map
\(
z\mapsto \frac{z}{1-z}
\)
and where the induced non-autonomous composition exhibits a \emph{bifurcation-like} effect as the base point $w_0=w_0(N)\to 0$ and the number of iterates $N\to\infty$.

The results of Section \ref{sec:nonautonomousimplosions} provide a flexible mechanism for producing such examples: if along the base orbit one can write the fiber maps in the form
\[
f_k(z)=\rho_k\,\frac{z}{1-z}+\epsilon_k^2,
\qquad \rho_k=e^{2\pi i\theta_k},
\]
with $\theta_k=\frac{1}{N}+O(N^{-2})$ (or $\theta_k=0$ in the additive scaling regime) and $\epsilon_k=O(N^{-2})$ in one of the admissible patterns from Theorems~\ref{thm:A}--\ref{thm:B}, then the induced $N$--step fiber composition converges to the identity. 
In particular, in each of the examples below, we choose a base map $g$ and an initial parameter $w_0=w_0(N)$ so that the resulting sequence $\{\theta_k,\epsilon_k\}$ falls under one of our theorems, yielding
\[
F^{N}(z,w_0)\longrightarrow (z,0)
 \text{ as }N\to\infty,
\]
uniformly on compact subsets in the $z$--variable.

\begin{exmp}[Constant base parameter]
Consider
\[
F(z,w)=\left(e^{2\pi i w}\frac{z}{1-z},\,w\right).
\]
Then $F^{N}(z,w_0)\to (z,0)$ for $w_0=\frac{1}{N}$.
\end{exmp}

\begin{proof}
If $w_0=\frac{1}{N}$, then the base coordinate is constant: $w_k=w_0$ for all $k$. Hence the fiber map at each step is
\[
f(z)=e^{2\pi i/N}\frac{z}{1-z}.
\]
Therefore
\[
F^N\!\left(z,\frac{1}{N}\right)=\left(f^{\circ N}(z),\frac{1}{N}\right)=\left(z,\frac{1}{N}\right)
\]
by Example~\ref{exmp: basic}, and the claim follows.
\end{proof}

\begin{exmp}[Alternating multiplicative perturbations]\label{exmp:skew-alternating}
Consider
\[
H(z,w)=\left(e^{2\pi i(\frac{1}{N}+w)}\frac{z}{1-z},\, -w\right).
\]
Then $H^{N}(z,w_0)\to (z,0)$ for $w_0=-\frac{1}{N^2}$.
\end{exmp}

\begin{proof}
Set $w_0=-\frac{1}{N^2}$ and define $c_k\in\{\pm1\}$ by
\[
c_k=\begin{cases}
1,& k\ \text{odd},\\
-1,& k\ \text{even}.
\end{cases}
\]
Then $w_k=c_k w_0$ and the $k$th fiber map is
\[
f_k(z)=e^{2\pi i\theta_k}\frac{z}{1-z},
\qquad \theta_k=\frac{1}{N}+w_k=\frac{1}{N}+\frac{c_k}{N^2}.
\]
Since $c_k+c_{k+1}=0$ for all $k$, Theorem~\ref{thm:A}(2) applies and gives
\(
f_N\circ\cdots\circ f_1\to \mathrm{Id}.
\)
Also $w_N=(-1)^N w_0\to 0$. Hence $H^N(z,w_0)\to (z,0)$.
\end{proof}

\begin{exmp}[Rotating multiplicative perturbations]
Consider
\[
G(z,w)=\left(e^{2\pi i(\frac{1}{N}+w)}\frac{z}{1-z},\, e^{2\pi i/N}\,w\right).
\]
Then $G^{N}(z,w_0)\to (z,0)$ for $w_0=\frac{e^{2\pi i/N}}{N^2}$.
\end{exmp}

\begin{proof}
With $w_0=\frac{e^{2\pi i/N}}{N^2}$ we have
\(
w_k=\frac{e^{2\pi i(k+1)/N}}{N^2}.
\)
Define
\[
\theta_k=\frac{1}{N}+\frac{e^{2\pi i k/N}}{N^2},
\qquad 
f_k(z)=e^{2\pi i\theta_k}\frac{z}{1-z}.
\]
Then $\theta_k=\frac{1}{N}+w_{k-1}$, so the $k$th fiber map along the orbit is precisely $f_k$. By Theorem~\ref{thm:A}(3),
\(
f_N\circ\cdots\circ f_1\to \mathrm{Id}.
\)
Moreover $|w_N|=1/N^2\to 0$, so $G^{N}(z,w_0)\to (z,0)$.
\end{proof}

\begin{exmp}[Including additive perturbations: constant base]
Consider
\[
F_1(z,w)=\left(e^{2\pi i w}\frac{z}{1-z}+w^4,\,w\right).
\]
Then $F_1^{N}(z,w_0)\to (z,0)$ for $w_0=\frac{1}{N}$.
\end{exmp}

\begin{proof}
If $w_0=\frac{1}{N}$, then $w_k=w_0$ for all $k$, and each fiber map equals
\[
f(z)=\rho\,\frac{z}{1-z}+\epsilon^2,
\qquad 
\rho=e^{2\pi i/N},
\qquad 
\epsilon^2=w_0^4=\frac{1}{N^4}.
\]
Thus $\theta=\frac{1}{N}$ and $\epsilon=O(N^{-2})$, so Theorem~\ref{thm:B}(1) gives
\(
f^{\circ N}\to \mathrm{Id}.
\)
Hence
\(
F_1^N(z,w_0)=\bigl(f^{\circ N}(z),w_0\bigr)\to (z,0).
\)
\end{proof}

\begin{exmp}[Including additive perturbations: alternating base]
Consider
\[
G_1(z,w)=\left(e^{2\pi i(\frac{1}{N}+w)}\frac{z}{1-z}+w^2,\,-w\right).
\]
Then $G_1^{N}(z,w_0)\to (z,0)$ for $w_0=-\frac{1}{N^2}$.
\end{exmp}

\begin{proof}
Let $w_0=-\frac{1}{N^2}$ and define $c_k$ as in Example~\ref{exmp:skew-alternating}, so $w_k=c_k w_0$ and $w_k^2\equiv \frac{1}{N^4}$. The induced fiber maps are
\[
f_k(z)=e^{2\pi i\theta_k}\frac{z}{1-z}+\epsilon_k^2,
\qquad 
\theta_k=\frac{1}{N}+w_k=\frac{1}{N}+\frac{c_k}{N^2},
\qquad 
\epsilon_k^2=w_k^2=\frac{1}{N^4}.
\]
Since $c_k+c_{k+1}=0$ and $\epsilon_k=O(N^{-2})$, Theorem~\ref{thm:B}(2) applies and yields
\(
f_N\circ\cdots\circ f_1\to \mathrm{Id}.
\)
Also $w_N=(-1)^N w_0\to 0$, hence $G_1^{N}(z,w_0)\to (z,0)$.
\end{proof}

The examples above illustrate how the abstract non-autonomous convergence results proved in this paper naturally arise in the study of skew-product dynamics.  
In each case, the dynamics of the base variable $w$ produces a controlled drift through a parabolic regime in the fiber, leading to a non-autonomous composition whose asymptotics are governed by Theorems~\ref{thm:A} and~\ref{thm:B}.  

While these examples do not by themselves yield global dynamical consequences such as the existence of wandering Fatou components, they capture the local bifurcation mechanisms that underpin such constructions in higher dimensional complex dynamics.

This perspective complements the work of Astorg and collaborators \cite{ABD+16, AstBoc22}, where similar skew-product configurations arise implicitly, and suggests that orthogonal polynomial techniques may offer a useful analytic framework for studying more intricate skew-product bifurcations, including random or weakly correlated perturbative regimes.  

We expect that these methods can be further adapted to settings where the base dynamics is no longer periodic or deterministic, potentially leading to new examples of non-autonomous parabolic phenomena in several complex variables.

\section{Random Compositions}\label{sec:randomcompositions}

In this section, we return to the purely additive setting studied by Vivas \cite{Viv20}, where the fiber maps are given by
\[
   f_k(z) \coloneqq \frac{z}{1-z} + \epsilon_k^2,
\]
but we now allow the perturbation parameters $\{\epsilon_k\}_{k\geq 1}$ to be \emph{random}.  
Our goal is to show that, while deterministic perturbations at the critical scale $1/N^2$ require delicate arithmetic cancellations, introducing randomness allows one to recover convergence under significantly weaker assumptions.

Throughout this section we restrict to the autonomous multiplicative case $\rho_k\equiv 1$, so that the dynamics is governed entirely by additive perturbations.  
In this setting the $N$th composition
\[
   F_N := f_N \circ f_{N-1} \circ \cdots \circ f_1
\]
can be expressed as a M\"obius transformation whose coefficients are determined by two recursive sequences $\{q_k\}$ and $\{r_k\}$.  
This representation, introduced in \cite{Viv20}, allows us to reduce the problem of convergence of $F_N$ to quantitative estimates on these sequences.

When $\rho_{k}\equiv 1$, we have $\rho_k=\rho+b_k\equiv 1$ for all $k$, and hence
$\rho+a_k = 1-\epsilon_k^2$.  
Specializing Lemma~\ref{lem:recursivepoly} to this case recovers Lemma~1 of \cite{Viv20}, which shows that
\[
   F_N := f_N \circ f_{N-1} \circ \cdots \circ f_1
\]
is a M\"obius transformation with coefficients determined by the sequences $\{q_k\}$ and $\{r_k\}$ defined by
\begin{align*}
q_{k+1} &= (2-\epsilon_{k}^{2}) q_k-q_{k-1}, & q_0&=0, & q_1&=1, \\
r_{k+1} &= (2-\epsilon_{k}^{2}) r_k-r_{k-1}, & r_0&=1, & r_1&=1.  
\end{align*}

The relationship between $\{q_k\}$ and $\{r_k\}$ is given by Lemma~\ref{lem:r_kdiffeq}.  
In particular, when $\rho_k\equiv 1$, this reduces to Lemma~6 of \cite{Viv20}.

\begin{lemma}\label{lemma:skEpsilon}
If $\rho_k\equiv 1$, then
\[
   r_{k+1} = q_k - s_{k-1},
\]
where the auxiliary sequence $\{s_k\}$ is defined by $s_0=0$, $s_1=1$, and
\[
   s_{k+1}=(2-\epsilon_{k+1}^2)s_k-s_{k-1}.
\]
\end{lemma}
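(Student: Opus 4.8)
The plan is to derive Lemma~\ref{lemma:skEpsilon} as the specialization of Lemma~\ref{lem:r_kdiffeq} to the case $\rho_k\equiv 1$ and to reconcile the indexing. Recall that Lemma~\ref{lem:r_kdiffeq} gives, in full generality,
\[
   r_k = q_k - (\rho+b_1)\,s_{k-1},
\]
where $\{s_k\}$ satisfies $s_0=0$, $s_1=1$, and $s_{k+1}=(1+\rho+a_{k+1})s_k-(\rho+b_{k+1})s_{k-1}$ for $k\geq 2$. First I would substitute $\rho_k\equiv 1$: as observed in the text preceding the lemma, this forces $\rho+b_k\equiv 1$ and $\rho+a_k=1-\epsilon_k^2$ for every $k$. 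In particular $\rho+b_1=1$, so the formula collapses to $r_k = q_k - s_{k-1}$, and the recursion for $s_k$ becomes $s_{k+1}=(2-\epsilon_{k+1}^2)s_k-s_{k-1}$, which is exactly the sequence in the statement (with the convenient reindexing that $s_k$ uses the shifted parameter $\epsilon_{k+1}$, matching $q_k$'s dependence on $\epsilon_k$).

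The only genuine content is the index shift between ``$r_k = q_k - s_{k-1}$'' and the stated ``$r_{k+1} = q_k - s_{k-1}$.'' The point is that in the purely additive autonomous case $\{q_k\}$ itself is translation-compatible: since the recursion coefficients $2-\epsilon_k^2$ do not change when we shift the window of summation (this is where $\rho_k\equiv 1$ is essential — the coefficients are genuinely $k$-independent up to the $\epsilon_k^2$ perturbation, and one tracks those carefully), one has $q_{k+1}-q_k$ and the auxiliary sequence $s_{k-1}$ related in the cleaner form asserted. Concretely I would verify the stated identity $r_{k+1}=q_k-s_{k-1}$ directly by induction on $k$: check the base cases $k=0$ ($r_1=1=q_0-s_{-1}$, reading off the extended initial data) and $k=1$ ($r_2 = 2-\epsilon_1^2$, while $q_1 - s_0 = 1 - 0 = 1$ — so here one sees the indexing must instead be read against the recursion, and I would pin down the exact convention from Lemma~6 of \cite{Viv20} that this is quoting), then show both sides satisfy the same three-term recursion $x_{k+1}=(2-\epsilon_k^2)x_k-x_{k-1}$, so equality propagates.

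The main obstacle — really the only subtlety — is bookkeeping: making sure the shift in the $\epsilon$-index between the $\{s_k\}$ recursion (which uses $\epsilon_{k+1}$) and the $\{q_k\},\{r_k\}$ recursions (which use $\epsilon_k$) is consistent with the claimed relation $r_{k+1}=q_k-s_{k-1}$, and confirming that this is the correct transcription of Lemma~6 of \cite{Viv20}. Once the initial conditions are aligned and both sequences are shown to obey the identical linear recurrence, the identity follows with no further estimation. I expect the proof to be two or three lines: cite Lemma~\ref{lem:r_kdiffeq}, set $\rho_k\equiv 1$ so $\rho+b_1=1$, and observe that the resulting $s_k$-recursion and the relation match the stated formula after reindexing.
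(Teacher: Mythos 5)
Your overall strategy is the right one and is exactly what the paper intends: Lemma~\ref{lemma:skEpsilon} is just Lemma~\ref{lem:r_kdiffeq} specialized to $\rho_k\equiv 1$, where $\rho+b_k\equiv 1$ and $1+\rho+a_k=2-\epsilon_k^2$, so $\rho+b_1=1$ and the general identity collapses to $r_k=q_k-s_{k-1}$ with $s_{k+1}=(2-\epsilon_{k+1}^2)s_k-s_{k-1}$. You carry out this specialization correctly. The gap is in what you do afterwards: rather than concluding that the index $r_{k+1}$ in the printed statement is an off-by-one error, you spend a paragraph trying to justify the extra shift and then defer to the reference. The ``translation-compatibility'' argument is not valid --- the coefficient $2-\epsilon_k^2$ genuinely depends on $k$, so re-indexing the recursion changes the sequence --- and your own base case already refutes the printed identity once the arithmetic is fixed: $r_2=(2-\epsilon_1^2)r_1-r_0=1-\epsilon_1^2$ (your value $2-\epsilon_1^2$ drops the term $r_0=1$), whereas $q_1-s_0=1$. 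These differ whenever $\epsilon_1\neq 0$, so $r_{k+1}=q_k-s_{k-1}$ is false under the paper's conventions; the correct statement, consistent with Lemma~\ref{lem:r_kdiffeq}, is $r_k=q_k-s_{k-1}$. You should commit to that conclusion rather than leave both options open.

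To close the argument in two lines without invoking Lemma~\ref{lem:r_kdiffeq}, set $u_k:=q_k-r_k$, so $u_0=-1$, $u_1=0$, $u_2=(2-\epsilon_1^2)u_1-u_0=1$, and $u_{k+1}=(2-\epsilon_k^2)u_k-u_{k-1}$; the shifted sequence $v_k:=s_{k-1}$ satisfies $v_1=0$, $v_2=1$, and the same recurrence $v_{k+1}=(2-\epsilon_k^2)v_k-v_{k-1}$ for $k\ge 2$, so $u_k=s_{k-1}$ for all $k\ge 1$, i.e.\ $r_k=q_k-s_{k-1}$. The misprint is harmless downstream: in the proof of Theorem~\ref{thm:random}, the limits $D_N=r_N\to-1$ and $B_N=r_N-r_{N+1}\to 0$ come out the same under either indexing given the estimates of Proposition~\ref{prop:randombigO}, which is presumably why it went unnoticed.
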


The key idea is to compare the random recursion satisfied by $q_k$ with the deterministic Chebyshev recursion, and to show that random fluctuations remain sufficiently small with overwhelming probability.

We compare the sequence $\{q_k\}$ with the Chebyshev polynomials of the second kind $\{U_k\}$.

\begin{lemma}[{\cite[Lemma~4]{Viv20}}]
Let
\begin{align*}
q_{k+1}&=(x+d_k)q_k-q_{k-1}, & q_0&=0, & q_1&=1,\\
U_{k+1}&=xU_k-U_{k-1}, & U_0&=0, & U_1&=1.
\end{align*}
Let $x=2\cos\theta$.  
If there exist $\epsilon>0$ and $m\in\mathbb{N}$ such that
\[
\sum_{j=1}^{m-1}|d_j q_j|\le \epsilon\sin\theta,
\]
then $|q_n-U_n|\le \epsilon$ for all $1\le n\le m$.
\end{lemma}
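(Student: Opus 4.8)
The plan is to reduce the statement to the explicit discrete variation-of-constants (Duhamel) formula for three-term recurrences, in exact parallel with Lemma~\ref{lem:relationship} but with the unweighted Chebyshev recursion $U_k$ playing the role of $T_k$. Set $e_k := q_k - U_k$ and subtract the two recurrences. Since $U_{k+1} - x U_k + U_{k-1} = 0$ while $q_{k+1} - x q_k + q_{k-1} = d_k q_k$, the error sequence satisfies the inhomogeneous recursion $e_{k+1} - x e_k + e_{k-1} = d_k q_k$ with vanishing initial data $e_0 = e_1 = 0$.

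The first step is to solve this recursion by Duhamel's principle, namely to show
\[
   e_n = \sum_{j=1}^{n-1} U_{n-j}\, d_j q_j, \qquad n \ge 1 .
\]
This is a one-line induction on $n$: the cases $n = 1, 2$ are immediate from $U_0 = 0$, $U_1 = 1$, and the inductive step uses the identity $x U_{n-j} - U_{n-1-j} = U_{n+1-j}$ (valid for $n - j \ge 1$) together with the fact that the newly appearing $j = n$ term contributes $U_1 d_n q_n = d_n q_n$. Note that $q_j$ on the right-hand side is the genuine perturbed solution, so this is an identity rather than a closed form; but that is exactly what is needed, since the hypothesis controls precisely the quantity $\sum |d_j q_j|$ and no circularity is introduced.

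The second step is the uniform bound on the Chebyshev kernel. Writing $x = 2\cos\theta$ with $\theta$ real in $(0,\pi)$ (so that $\sin\theta > 0$), one has $U_m = \sin(m\theta)/\sin\theta$, hence $|U_{n-j}| \le 1/\sin\theta$ for every $j$. Combined with the Duhamel formula this gives, for all $1 \le n \le m$,
\[
   |e_n| \le \frac{1}{\sin\theta} \sum_{j=1}^{n-1} |d_j q_j| \le \frac{1}{\sin\theta} \sum_{j=1}^{m-1} |d_j q_j| \le \frac{1}{\sin\theta} \cdot \epsilon \sin\theta = \epsilon ,
\]
where enlarging the summation range from $n-1$ to $m-1$ only adds nonnegative terms. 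This is precisely $|q_n - U_n| \le \epsilon$.

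There is no real obstacle here; the only points requiring a little care are the sign convention on $\theta$ (one must work in the regime $\sin\theta \neq 0$ for the kernel bound and the hypothesis to be meaningful) and the bookkeeping in the induction establishing the Duhamel identity, where the recurrence $U_{k+1} = x U_k - U_{k-1}$ must only be invoked for indices $k \ge 1$. Everything else is routine.
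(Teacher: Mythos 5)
Your proof is correct and is essentially the paper's argument: your Duhamel identity $q_n-U_n=\sum_{j=1}^{n-1}U_{n-j}\,d_jq_j$ is exactly the real form of the identity $\sin\theta\,(q_n-U_n)=-\mathrm{Im}\bigl(\delta_n e^{-in\theta}\bigr)$ that the paper quotes from \cite{Viv20}, since $U_{n-j}=\sin((n-j)\theta)/\sin\theta$. The kernel bound $|U_{n-j}|\le 1/\sin\theta$ then yields the conclusion in the same way the paper bounds $|\delta_n|$ by $\sum_{j}|d_jq_j|$.
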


The proof begins with the identity
\[
\sin\theta\,(q_n-U_n)=-\textrm{Im}\left(\delta_n e^{-in\theta}\right),
\qquad
\delta_n:=\sum_{j=1}^{n-1} d_j q_j e^{ij\theta}.
\]

We now assume that the coefficients $d_k$ are random.  
Throughout this section we fix $\delta>0$ and write
\begin{align}\label{randomerror}
d_k=\frac{\xi_k}{N^{2+\delta}},  
\end{align}
where $\xi_k$ are any independent, bounded, mean-zero random variables.

\begin{lemma}\label{lem:martingale}
Let
\[
\delta_n:=\sum_{k=0}^{n-1} d_k\,q_k\,e^{ik\theta}, \qquad n\ge1.
\]
Then $\{\delta_n\}_{n\ge1}$ is a martingale with respect to the filtration
$\mathcal{F}_n=\sigma(\xi_0,\dots,\xi_{n-1})$.
\end{lemma}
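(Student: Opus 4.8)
The plan is to verify directly the three defining properties of a martingale for $\{\delta_n\}$ relative to $\{\mathcal{F}_n\}$ — adaptedness, integrability, and the identity $\mathbb{E}[\delta_{n+1}\mid\mathcal{F}_n]=\delta_n$ — and the only structural input required is how $q_k$ depends on the randomness.

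First I would record the key observation that the recursion $q_{k+1}=(x+d_k)q_k-q_{k-1}$ with $q_0=0$, $q_1=1$ expresses each $q_k$ as a fixed polynomial in $d_1,\dots,d_{k-1}$, hence — since $d_j=\xi_j/N^{2+\delta}$ — as a bounded measurable function of $\xi_1,\dots,\xi_{k-1}$ alone. In particular $q_k$ is $\mathcal{F}_k$-measurable and, crucially, does not involve $\xi_k$. Consequently each summand $d_kq_ke^{ik\theta}$ with $0\le k\le n-1$ is bounded and $\mathcal{F}_n$-measurable, so $\delta_n$ is $\mathcal{F}_n$-measurable and, being a finite sum of bounded random variables for fixed $N$, integrable. (The $k=0$ term vanishes since $q_0=0$, so $\xi_0$ never actually enters; the filtration is indexed starting at $0$ only for notational uniformity.)

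Next I would compute the increment $\delta_{n+1}-\delta_n=d_nq_ne^{in\theta}$ and condition on $\mathcal{F}_n$. The factor $q_ne^{in\theta}$ is $\mathcal{F}_n$-measurable by the previous step, while $d_n=\xi_n/N^{2+\delta}$ is independent of $\mathcal{F}_n=\sigma(\xi_0,\dots,\xi_{n-1})$ because the $\xi_k$ are independent. Pulling out the measurable factor and using $\mathbb{E}[d_n]=0$ yields
\[
\mathbb{E}\!\left[\delta_{n+1}-\delta_n\mid\mathcal{F}_n\right]=q_ne^{in\theta}\,\mathbb{E}\!\left[d_n\mid\mathcal{F}_n\right]=q_ne^{in\theta}\,\mathbb{E}[d_n]=0,
\]
which is the martingale identity; together with adaptedness and integrability this completes the proof.

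There is no serious obstacle here: the single point requiring care is the adaptedness bookkeeping — namely that the coefficient $q_n$ multiplying the fresh increment $d_n$ is built only from $\xi_1,\dots,\xi_{n-1}$. This \emph{predictability} is exactly what upgrades the telescoping sum from a sum of centered but dependent terms to a genuine martingale, and it is the feature that will be exploited when a martingale concentration inequality is later invoked to show $|\delta_n|$ is small with overwhelming probability.
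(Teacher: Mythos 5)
Your proof is correct and follows essentially the same route as the paper's: check adaptedness via the predictability of $q_k$, integrability from boundedness of the $\xi_k$, and the martingale identity by writing the one-step increment $d_n q_n e^{in\theta}$, pulling out the $\mathcal{F}_n$-measurable factor $q_n e^{in\theta}$, and using independence plus $\mathbb{E}[d_n]=0$. Your added remarks (that $q_k$ is a polynomial in $d_1,\dots,d_{k-1}$ only, and that the $k=0$ term vanishes since $q_0=0$) are accurate and merely make explicit the bookkeeping the paper leaves implicit.
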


\begin{proof}
Since $q_k$ depends only on $\xi_0,\dots,\xi_{k-1}$, the variable $\delta_n$ is $\mathcal{F}_n$-measurable.
Moreover,
\[
\mathbb{E}|\delta_n|
\le \sum_{k=0}^{n-1}\mathbb{E}(|d_k||q_k|)<\infty,
\]
because the $\xi_k$ are bounded.

Finally,
\[
\delta_n=\delta_{n-1}+d_{n-1}q_{n-1}e^{i(n-1)\theta}.
\]
Conditioning on $\mathcal{F}_{n-1}$ and using $\mathbb{E}(\xi_{n-1})=0$ gives
\[
\mathbb{E}(\delta_n\mid\mathcal{F}_{n-1})=\delta_{n-1},
\]
so $\{\delta_n\}$ is a martingale.
\end{proof}

Recall from Lemma 5 of \cite{Viv20} that the condition on $d_k$ immediately gives us the bound $q_k$ bounded by $N$ for all $1\leq k \leq N$ with probability one. Therefore we have the following:

\begin{lemma}\label{lem:applyAzuma}
Assume $|\xi_k|\le M$ almost surely and $|q_k|\le N$ with probability one, for $k\le n$.  
Then for any $\lambda_n>0$,
\[
\mathbb{P}(|\delta_n|\ge\lambda_n)
\le
\exp\!\left(-\frac{\lambda_n^2 N^{2(1+\delta)}}{2M^2 n}\right).
\]
Consequently,
\[
\mathbb{P}\bigl(|q_n-U_n|\ge \tfrac{\lambda_n}{\sin\theta}\bigr)
\le
\exp\!\left(-\frac{\lambda_n^2 N^{2(1+\delta)}}{2M^2 n}\right).
\]
\end{lemma}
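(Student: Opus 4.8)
The statement to be proved is Lemma~\ref{lem:applyAzuma}, which combines the martingale structure from Lemma~\ref{lem:martingale} with the polynomial growth bound $|q_k|\le N$ to get an exponential concentration estimate via the Azuma--Hoeffding inequality. The plan is as follows. First I would record the martingale difference increments: from Lemma~\ref{lem:martingale} we have $\delta_n-\delta_{n-1}=d_{n-1}q_{n-1}e^{i(n-1)\theta}$, and with $d_k=\xi_k/N^{2+\delta}$ this gives $|\delta_n-\delta_{n-1}| = |\xi_{n-1}||q_{n-1}|/N^{2+\delta}$. Using the two hypotheses $|\xi_k|\le M$ a.s.\ and $|q_k|\le N$ with probability one for $k\le n$, each increment is bounded almost surely by $MN/N^{2+\delta} = M/N^{1+\delta}$. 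So the martingale $\{\delta_j\}_{j=0}^n$ has uniformly bounded differences $b_j := M/N^{1+\delta}$.

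Second, I would apply Azuma--Hoeffding. A small technical point is that $\delta_n$ is complex-valued, so I would split into real and imaginary parts (each is a real martingale with increments bounded by $M/N^{1+\delta}$) and apply the scalar inequality to each, or simply invoke a complex/vector-valued version of Azuma. Either way, $\mathbb{P}(|\delta_n|\ge\lambda_n)\le 2\exp\!\left(-\lambda_n^2/(2\sum_{j=1}^n b_j^2)\right) = 2\exp\!\left(-\lambda_n^2 N^{2(1+\delta)}/(2M^2 n)\right)$; absorbing the factor $2$ (or using a one-sided bound on $|\delta_n|$ directly via the observation that we only need one tail for the subsequent union bound) yields the stated estimate $\exp\!\left(-\lambda_n^2 N^{2(1+\delta)}/(2M^2 n)\right)$. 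Note $2(1+\delta)$ in the exponent matches $N^{2+2\delta}$, which is $(N^{1+\delta})^2$ coming from squaring the increment bounds — so the arithmetic is consistent.

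Third, for the ``consequently'' clause I would invoke the preceding Lemma of \cite{Viv20} (the comparison with Chebyshev polynomials), which gives the deterministic identity $\sin\theta\,(q_n-U_n) = -\operatorname{Im}(\delta_n e^{-in\theta})$. Hence $|q_n-U_n| = |\operatorname{Im}(\delta_n e^{-in\theta})|/\sin\theta \le |\delta_n|/\sin\theta$, so the event $\{|q_n-U_n|\ge \lambda_n/\sin\theta\}$ is contained in $\{|\delta_n|\ge\lambda_n\}$, and the second probability bound follows immediately from the first. One should be slightly careful that Lemma~4 of \cite{Viv20} is stated with a summation hypothesis $\sum|d_jq_j|\le\epsilon\sin\theta$ rather than the identity, but the proof of that lemma (as indicated in the excerpt, ``The proof begins with the identity...'') passes through exactly this exact formula for $\sin\theta(q_n-U_n)$, so I would cite that identity directly.

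\textbf{Main obstacle.} The mathematics here is short; the only real subtlety is logical rather than computational. The hypothesis ``$|q_k|\le N$ with probability one, for $k\le n$'' is being used to bound the martingale increments \emph{pathwise}, but $q_k$ is itself random and depends on $\xi_0,\dots,\xi_{k-1}$, so strictly the Azuma inequality with deterministic increment bounds applies on the full-probability event where $|q_k|\le N$ holds for all $k\le n$; off that event (probability zero) the bound is vacuous, so this is harmless but should be stated cleanly. The other point to handle carefully is the complex-valued martingale: Azuma--Hoeffding is classically stated for real scalars, so I must either split into real/imaginary components (picking up a harmless constant, or avoiding it by noting $|{\rm Im}(\delta_n e^{-in\theta})|$ is what actually appears and is a real martingale transform with the same increment bound) or cite a vector-valued Azuma. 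I expect to take the real-part route since that is exactly the quantity that feeds into the $q_n-U_n$ estimate, which both simplifies the constant and makes the ``consequently'' step transparent.
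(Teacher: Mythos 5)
Your proposal is correct and follows essentially the same route as the paper: bound the martingale increment $|\delta_n-\delta_{n-1}|=|d_{n-1}q_{n-1}|\le M/N^{1+\delta}$ using the two hypotheses, apply Azuma's inequality, and deduce the second bound from the identity $\sin\theta\,(q_n-U_n)=-\operatorname{Im}(\delta_n e^{-in\theta})$. Your additional remarks on the complex-valued martingale (and the attendant factor of $2$, which the paper silently omits) and on the pathwise use of the bound $|q_k|\le N$ are legitimate refinements of the same argument rather than a different approach.
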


\begin{proof}
Since
\[
|\delta_n-\delta_{n-1}|
=|d_{n-1}q_{n-1}|
\le \frac{M}{N^{1+\delta}},
\]
Azuma's inequality (see for instance \cite{AlonSpencer}) yields the stated bound.  
The estimate for $|q_n-U_n|$ follows from the identity above.
\end{proof}

\begin{prop}\label{prop:randombigO}
Let $\theta=\pi/N$, $x=2\cos\theta$.  
With probability at least $1-\exp(-N^\delta)$,
\begin{align*}
|q_{N-1}-1| &= O\!\left(N^{-\frac12(1+\delta)}\right),\\
|q_{N}| &= O\!\left(N^{-\frac12(1+\delta)}\right),\\
|q_{N+1}+1| &= O\!\left(N^{-\frac12(1+\delta)}\right),\\
|s_{N-2}-2\cos\theta| &= O\!\left(N^{-\frac12(1+\delta)}\right),\\
|s_{N-1}-1| &= O\!\left(N^{-\frac12(1+\delta)}\right).
\end{align*}
\end{prop}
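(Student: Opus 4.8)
The idea is to run Lemma~\ref{lem:applyAzuma} simultaneously at the five indices $n=N-2,N-1,N,N+1$ (and, where needed, the shifted recursion for $s_k$), choosing the deviation thresholds $\lambda_n$ just large enough that each failure probability is of size $\exp(-N^\delta)$, and then comparing $q_n$ (resp.\ $s_n$) with the Chebyshev polynomials $U_n$ whose exact values at these indices are known.

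\emph{Step 1: set up the comparison.} With $x=2\cos\theta$, $\theta=\pi/N$, and $d_k=\xi_k/N^{2+\delta}$ as in \eqref{randomerror}, Lemma~5 of \cite{Viv20} gives $|q_k|\le N$ for all $1\le k\le N+1$ with probability one; the same argument applies to $s_k$. Recall the closed forms $U_n=\sin(n\theta)/\sin\theta$, so that $U_{N-1}=\sin((N-1)\pi/N)/\sin(\pi/N)=1$, $U_N=0$, $U_{N+1}=\sin((N+1)\pi/N)/\sin(\pi/N)=-1$, $U_{N-2}=\sin((N-2)\pi/N)/\sin(\pi/N)=2\cos\theta+O(1/N)$. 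For $s_k$, which satisfies the same recursion with coefficients shifted by one index, the comparison sequence is again $U_k$ and the target values are $U_{N-1}=1$, $U_{N-2}=2\cos\theta$ (up to the same $O(1/N)$, which is absorbed into the claimed error since $N^{-1}\le N^{-\frac12(1+\delta)}$ fails only for $\delta>1$ — here one should instead note $2\cos(\pi/N)=2-O(1/N^2)$ and phrase the target as $2\cos\theta$ exactly, matching the statement).

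\emph{Step 2: choose $\lambda_n$ and apply Azuma.} For each relevant $n$ (all of which satisfy $n\le N+1$), Lemma~\ref{lem:applyAzuma} gives
\[
\mathbb{P}\!\left(|q_n-U_n|\ge \frac{\lambda_n}{\sin\theta}\right)\le \exp\!\left(-\frac{\lambda_n^2 N^{2(1+\delta)}}{2M^2 n}\right).
\]
Take $\lambda_n = c\,\sin\theta\cdot N^{-\frac12(1+\delta)}$ for a suitable constant $c>0$; since $\sin\theta\asymp 1/N$ and $n\asymp N$, the exponent becomes, up to constants, $-c^2 N^{2(1+\delta)}\cdot N^{-(1+\delta)}\cdot N^{-2}\cdot N^{-1}\cdot\text{const}$. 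This is not quite $N^\delta$, so the threshold must be re-tuned: choosing $\lambda_n/\sin\theta = C N^{-\frac12(1+\delta)}$ directly (i.e.\ $\lambda_n = C\sin\theta\,N^{-\frac12(1+\delta)}$) makes the exponent of order $-C^2 \sin^2\theta\, N^{-(1+\delta)} N^{2(1+\delta)}/(2M^2 n)\asymp -C^2 N^{-2}N^{1+\delta}/N = -C^2 N^{\delta-2}$, which is too weak. The correct bookkeeping is to recall that Lemma~\ref{lem:applyAzuma} already has the factor $N^{2(1+\delta)}$ from $d_k=\xi_k/N^{2+\delta}$ while $|d_{n-1}q_{n-1}|\le M/N^{1+\delta}$; taking $\lambda_n = N^{-\frac12(1+\delta)}$ outright gives exponent $\asymp -N^{-(1+\delta)}N^{2(1+\delta)}/N = -N^{\delta}$, and then $|q_n-U_n|\le \lambda_n/\sin\theta \asymp N\cdot N^{-\frac12(1+\delta)}$, which overshoots by a factor $N$. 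Resolving this requires using the sharper bound $|q_k|\le C$ (rather than $\le N$) near these indices, which follows because $U_k$ is bounded there and the deviation is small — a bootstrap: first get a crude bound from $|q_k|\le N$ valid for all $k$, then near $k\sim N$ upgrade to $|q_k|=O(1)$ and re-run Azuma with the smaller martingale increments $|d_k q_k|\le CM/N^{2+\delta}$, giving exponent $\asymp -\lambda_n^2 N^{2(2+\delta)}/N$ and hence $\lambda_n\asymp N^{-\frac32-\delta}$ suffices for probability $\exp(-N^\delta)$, whence $|q_n-U_n|\le\lambda_n/\sin\theta\asymp N^{-\frac12-\delta}\le N^{-\frac12(1+\delta)}$.

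\emph{Step 3: conclude and union-bound.} Having established the five estimates each on an event of probability $\ge 1-\exp(-cN^\delta)$, intersect them: a union bound over the five (or $O(1)$) events still leaves probability $\ge 1-\exp(-N^\delta)$ after adjusting the constant $c$ and absorbing it, which yields the stated conclusion. For the $s_k$ estimates one repeats Steps 1--2 with the shifted recursion, using that $\{s_k\}$ obeys $s_{k+1}=(2-\epsilon_{k+1}^2)s_k-s_{k-1}$, so $d_k^{(s)}=-\epsilon_{k+1}^2=\xi_{k+1}/N^{2+\delta}$ — the martingale structure of Lemma~\ref{lem:martingale} and Azuma's inequality apply verbatim with indices shifted by one.

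\textbf{Main obstacle.} The delicate point, as the scratch-work above shows, is getting the powers of $N$ to balance: the naive bound $|q_k|\le N$ combined with $\sin\theta\asymp 1/N$ loses a full factor of $N$ and does not give the claimed exponent $-\frac12(1+\delta)$. The fix is the bootstrap — first use $|q_k|\le N$ everywhere, then near $k\sim N$ upgrade to $|q_k|=O(1)$ using the very comparison with $U_k$ one is trying to prove (valid since the preliminary deviation, though not optimal, is still $o(1)$), and re-run Azuma with the now-smaller martingale increments. Making this two-stage argument rigorous, and checking that the intermediate bound is strong enough to feed back, is the heart of the proof; everything else is the routine evaluation of $U_n$ at $n=N-2,\dots,N+1$ and a union bound.
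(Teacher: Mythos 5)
Your diagnosis of the difficulty is right: the factor $1/\sin\theta\asymp N$ in the identity $\sin\theta\,(q_n-U_n)=-\mathrm{Im}(\delta_n e^{-in\theta})$ must be absorbed somewhere, and the crude bound $|q_k|\le N$ on the martingale increments is the pressure point. But the bootstrap you propose to resolve it does not work. The martingale $\delta_n=\sum_{k<n}d_kq_ke^{ik\theta}$ for $n$ near $N$ contains \emph{all} increments with $1\le k\le n-1$, and for $k$ in the bulk --- say $k\approx N/2$ --- one has $|U_k|=\sin(k\pi/N)/\sin(\pi/N)\approx N/\pi$, so $|q_k|$ is genuinely of order $N$ there; this is not an artifact of a crude a priori bound but is forced by the very comparison with $U_k$ you are running. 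Hence the bulk increments cannot be upgraded from $M/N^{1+\delta}$ to $CM/N^{2+\delta}$, the sum of squared increments is not reduced by the bootstrap, and the second, ``improved'' application of Azuma in your Step 2 is unavailable. Establishing $|q_k|=O(1)$ only for $k$ near $0$ and near $N$ changes $O(1)$ of the $n$ terms and does not alter the total variance. A secondary issue: even granting the smaller increments, your final tuning does not balance --- with $\lambda_n\asymp N^{-3/2-\delta}$ and increments $CM/N^{2+\delta}$ the Azuma exponent $\lambda_n^2N^{2(2+\delta)}/n$ is $O(1)$, not of order $N^{\delta}$.

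For comparison, the paper's proof contains no bootstrap: it applies Lemma~\ref{lem:applyAzuma} once with the explicit choice $\lambda_n=\sqrt{2n}\,N^{-(3/2+\delta/2)}$, takes a union bound over $1\le n\le N+1$, evaluates $U_n$ at $n=N-2,\dots,N+1$, and treats $s_k$ by the shift of indices exactly as you indicate in Step 3 (that part of your plan is fine). The exponent bookkeeping in this problem is delicate --- your own scratch-work shows the naive single-pass bound losing a factor of $N$ --- so if you rewrite the argument you should fix a single $\lambda_n$, compute the resulting probability and the resulting bound $\lambda_n/\sin\theta$ explicitly, and verify both against the claimed rates, rather than attempting to shrink the martingale increments.
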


\begin{proof}
Let $\theta=\pi/N$ and $x=2\cos\theta$.  We apply Lemma~\ref{lem:applyAzuma} with the choice
\[
\lambda_n := \sqrt{2n}\,N^{-(3/2+\delta/2)} .
\]
Then for each $1\le n\le N+1$, Lemma~\ref{lem:applyAzuma} gives
\[
\mathbb{P}\bigl(|\delta_n|\ge \lambda_n\bigr)
\le
\exp\!\left(-c\,\frac{\lambda_n^2 N^{2(1+\delta)}}{n}\right)
=
\exp\!\left(-c\,N^{\delta}\right),
\]
for some constant $c>0$ depending only on the uniform bound on $|\xi_k|$.

Using the identity $\sin\theta\,(q_n-U_n)=-\textrm{Im}(\delta_n e^{-in\theta})$, we obtain
\[
|q_n-U_n|\le \frac{|\delta_n|}{\sin\theta}.
\]
Since $\sin(\pi/N)\sim 1/N$, there is a constant $C>0$ such that
\[
|q_n-U_n|\le C N\,|\delta_n|.
\]
Therefore, on the event $\{|\delta_n|\le \lambda_n\}$ we have
\[
|q_n-U_n|\le C N \lambda_n
\le C \sqrt{2n}\,N^{-(1/2+\delta/2)}
=O\!\left(N^{-\frac12(1+\delta)}\right),
\qquad 1\le n\le N+1.
\]

By a union bound over $1\le n\le N+1$,
\[
\mathbb{P}\Bigl(\max_{1\le n\le N+1}|\delta_n|>\lambda_n\Bigr)
\le (N+1)\exp(-cN^\delta)
\le \exp(-c' N^\delta)
\]
for some $c'>0$ and all $N$ large. Hence, with probability at least
$1-\exp(-c' N^\delta)$ we have simultaneously for all $1\le n\le N+1$,
\[
|q_n-U_n| = O\!\left(N^{-\frac12(1+\delta)}\right).
\]

Now use the explicit values of $U_n$ at $\theta=\pi/N$:
\[
U_{N-1}(2\cos(\pi/N))=\frac{\sin((N-1)\pi/N)}{\sin(\pi/N)}=1,\quad
U_N(2\cos(\pi/N))=\frac{\sin(\pi)}{\sin(\pi/N)}=0,
\]
\[
U_{N+1}(2\cos(\pi/N))=\frac{\sin((N+1)\pi/N)}{\sin(\pi/N)}=-1.
\]
This yields the first three estimates:
\[
|q_{N-1}-1|=O\!\left(N^{-\frac12(1+\delta)}\right),\quad
|q_N|=O\!\left(N^{-\frac12(1+\delta)}\right),\quad
|q_{N+1}+1|=O\!\left(N^{-\frac12(1+\delta)}\right).
\]

Finally, the same argument applies to the shifted sequence $s_n$ (as noted in the remark preceding Lemma~\ref{lem:martingale}), giving
\[
|s_n-U_n|=O\!\left(N^{-\frac12(1+\delta)}\right)
\]
uniformly for $n\le N+1$ on the same high-probability event. In particular,
\[
|s_{N-2}-U_{N-2}|=O\!\left(N^{-\frac12(1+\delta)}\right),\qquad
|s_{N-1}-U_{N-1}|=O\!\left(N^{-\frac12(1+\delta)}\right).
\]
Since
\[
U_{N-2}(2\cos(\pi/N))
=\frac{\sin((N-2)\pi/N)}{\sin(\pi/N)}
=\frac{\sin(2\pi/N)}{\sin(\pi/N)}=2\cos(\pi/N),
\]
and $U_{N-1}(2\cos(\pi/N))=1$, we obtain
\[
|s_{N-2}-2\cos(\theta)|=O\!\left(N^{-\frac12(1+\delta)}\right),\qquad
|s_{N-1}-1|=O\!\left(N^{-\frac12(1+\delta)}\right),
\]
as claimed.
\end{proof}

We are ready now to prove our main theorem in this section.

\begin{mainthm}

For fixed \(N\), let \(\{\eta_{k}\}_{k=1}^{N+1}\) be independent, bounded, mean-zero random variables, and set
\[
   \epsilon_{k} = \frac{\pi}{N}+\frac{\eta_{k}}{N^{1+\delta}}\quad (\delta>0), 
   \qquad
   f_k(z) = \frac{z}{1-z} + \epsilon_k^2 .
\]
Then
\[
   f_N\circ f_{N-1}\circ \cdots\circ f_1 \longrightarrow \mathrm{Id}
\]
as \(N\to\infty\), uniformly on compact subsets of $\mathbb{C}$, with probability one.
\end{mainthm}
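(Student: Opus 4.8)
The plan is to deduce the theorem from the coefficient estimates of Proposition~\ref{prop:randombigO} together with a Borel--Cantelli argument. The first point is to fix the probability space on which the almost-sure statement is to be read: take one sequence $\{\eta_k\}_{k\ge 1}$ of independent, uniformly bounded, mean-zero random variables on a common space, and for each $N$ let $F_N=f_N\circ\cdots\circ f_1$ be the composition built from $\epsilon_k=\frac{\pi}{N}+\frac{\eta_k}{N^{1+\delta}}$, $1\le k\le N+1$. Specializing Lemma~\ref{lem:recursivepoly} to $\rho_k\equiv 1$ gives $F_N(z)=\frac{A_N z+B_N}{C_N z+D_N}$ with $A_N=q_{N+1}-q_N$, $B_N=r_N-r_{N+1}$, $C_N=-q_N$, $D_N=r_N$, and Lemma~\ref{lemma:skEpsilon} expresses $r_N=q_{N-1}-s_{N-2}$ and $r_{N+1}=q_N-s_{N-1}$. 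Hence all four coefficients of $F_N$ are explicit functions of the five quantities $q_{N-1},q_N,q_{N+1},s_{N-2},s_{N-1}$ estimated in Proposition~\ref{prop:randombigO}.

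Second, I would translate those estimates into convergence of the coefficients. With $\theta=\pi/N$, Proposition~\ref{prop:randombigO} asserts that, off an event $B_N$ of probability at most $\exp(-cN^\delta)$, one has $q_{N-1}=1+O(N^{-(1+\delta)/2})$, $q_N=O(N^{-(1+\delta)/2})$, $q_{N+1}=-1+O(N^{-(1+\delta)/2})$, $s_{N-1}=1+O(N^{-(1+\delta)/2})$ and $s_{N-2}=2\cos\theta+O(N^{-(1+\delta)/2})$. Substituting, and using $\cos(\pi/N)\to 1$,
\[
C_N=O(N^{-(1+\delta)/2}),\qquad A_N=q_{N+1}-q_N\to -1,\qquad D_N=q_{N-1}-s_{N-2}=1-2\cos(\pi/N)+O(N^{-(1+\delta)/2})\to -1,
\]
and $B_N=(q_{N-1}+s_{N-1})-(q_N+s_{N-2})\to 0$. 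Since $D_N$ is eventually bounded away from $0$, I would normalize and write $F_N(z)=\frac{(A_N/D_N)z+B_N/D_N}{(C_N/D_N)z+1}$; on any compact $K\subset\mathbb{C}$ the denominator tends uniformly to $1$ and is eventually bounded below there, while the numerator tends uniformly to $z$, so $F_N\to\mathrm{Id}$ uniformly on $K$. Thus on the event that the conclusion of Proposition~\ref{prop:randombigO} holds for all large $N$, the desired convergence follows.

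Third comes the Borel--Cantelli step. Each $B_N$ depends only on $\eta_1,\dots,\eta_{N+1}$, so it is an event on the common space, and $\mathbb{P}(B_N)\le\exp(-cN^\delta)$. Since $N^\delta/\log N\to\infty$ for every $\delta>0$, the series $\sum_N\mathbb{P}(B_N)$ converges, so almost surely only finitely many $B_N$ occur. On that full-measure event the coefficient estimates above hold for all large $N$, and therefore $F_N\to\mathrm{Id}$ uniformly on compact subsets of $\mathbb{C}$, which is the claim.

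The substantive work is all inside Proposition~\ref{prop:randombigO}: the martingale structure of $\delta_n=\sum_k d_k q_k e^{ik\theta}$ (Lemma~\ref{lem:martingale}), Azuma's inequality with increments of size $O(N^{-(1+\delta)})$ (Lemma~\ref{lem:applyAzuma}), and the a priori bound $|q_k|\le N$. What still needs care in assembling the theorem from it is twofold: (i) checking that expanding $2-\epsilon_k^2$ about $2\cos(\pi/N)$ really puts the recursion coefficient in the form $2\cos(\pi/N)+d_k$ with the mean-zero fluctuation $d_k$ at scale $N^{-(2+\delta)}$, and the remaining deterministic residue (coming from $\eta_k^2/N^{2+2\delta}$ and from $2\cos(\pi/N)-(2-\pi^2/N^2)$) negligible relative to the bounds being proved; and (ii) ensuring that the per-$N$ exceptional probabilities, read on the fixed ambient sequence $\{\eta_k\}_{k\ge 1}$, are summable, so that Borel--Cantelli upgrades them to an almost-sure conclusion. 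I expect (i) to be the more delicate bookkeeping, and (ii) to be the conceptual point that makes the phrase ``with probability one'' meaningful in this $N$-dependent setup.
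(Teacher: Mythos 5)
Your proposal follows essentially the same route as the paper's proof: reduce to the coefficient recursions via Lemma~\ref{lem:recursivepoly} and Lemma~\ref{lemma:skEpsilon}, feed in the high-probability estimates of Proposition~\ref{prop:randombigO}, and upgrade to an almost-sure statement by summing the exceptional probabilities and applying Borel--Cantelli. The bookkeeping point you flag in (i) --- that $d_k$ contains the non-centered term $-\eta_k^2/N^{2+2\delta}$ and a deterministic $O(1/N^4)$ residue, so it is not literally of the form $\xi_k/N^{2+\delta}$ with $\xi_k$ mean-zero --- is real, but the paper passes over it at the same level of detail, so your write-up is on par with (and slightly more candid than) the published argument.
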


\begin{proof}
For $\epsilon_k$ as above, we have:
\[
2-\epsilon_k^2=2-\frac{\pi^2}{N^2}-\frac{2\pi\eta_k}{N^{2+\delta}}-\frac{\eta_k^2}{N^{2+2\delta}}.
\]
For $x=2\cos(\pi/N)$, we have
$x+d_k=2-\epsilon_k^2$ when
\[
d_k=-\frac{2\pi\eta_k}{N^{2+\delta}}-\frac{\eta_k^2}{N^{2+2\delta}}+O(1/N^4).
\]
We see that this satisfies conditions of equation \ref{randomerror}.
For the composition of the linear transformations, we use the 
Lemma~\ref{lem:recursivepoly} 
\[
F_N(z)=\frac{A_N z+B_N}{C_N z+D_N},
\]
with
\[
A_N=q_{N+1}-q_N,\quad
B_N=r_N-r_{N+1},\quad
C_N=-q_N,\quad
D_N=r_N.
\]
Using Lemma~\ref{lemma:skEpsilon} and Proposition~\ref{prop:randombigO}, we obtain
\[
A_N\to -1,\quad B_N\to 0,\quad C_N\to 0,\quad D_N\to -1
\]
almost surely.  
Thus $F_N\to\mathrm{Id}$ uniformly on compact sets.

Since the failure probabilities are summable, Borel--Cantelli implies that these bounds hold for all sufficiently large $N$ almost surely.
\end{proof}
 
In the random setting, independence and zero mean suffice to ensure almost sure convergence. This suggests that non-autonomous parabolic implosion may be robust under stochastic perturbations, including in higher-dimensional settings.

\section*{Acknowledgements}
The authors thank Hoi Nguyen for suggesting Azuma's lemma as a tool to prove convergence in the random setting. SS was partially supported by NSF DMS-2231565, and LV was partially supported by NSF 2453797.

\bibliographystyle{plain}
\bibliography{main}
\nocite{*}

\end{document}